\theoremstyle{plain}
\newtheorem{theorem}{Theorem}[section]
\newtheorem{proposition}[theorem]{Proposition}
\newtheorem{lemma}[theorem]{Lemma}
\newtheorem{corollary}[theorem]{Corollary}
\newtheorem{conjecture}[theorem]{Conjecture}
\theoremstyle{remark}
\newtheorem{remark}[theorem]{Remark}
\theoremstyle{definition}
\newtheorem{definition}[theorem]{Definition}
\DeclareMathOperator{\Gal}{Gal}
\DeclareMathOperator{\Hom}{Hom}
\DeclareMathOperator{\End}{End}
\DeclareMathOperator{\im}{im}
\newcommand{\bG}{\mathbb{G}}
\newcommand{\bQ}{\mathbb{Q}}
\newcommand{\bT}{\mathbb{T}}
\newcommand{\bV}{\mathbb{V}}
\newcommand{\bZ}{\mathbb{Z}}
\newcommand{\cE}{\mathcal{E}}
\newcommand{\cF}{\mathcal{F}}
\newcommand{\cG}{\mathcal{G}}
\newcommand{\cI}{\mathcal{I}}
\newcommand{\cK}{\mathcal{K}}
\newcommand{\cO}{\mathcal{O}}
\newcommand{\cR}{\mathcal{R}}
\newcommand{\fa}{\mathfrak{a}}
\newcommand{\fq}{\mathfrak{q}}
\newcommand{\fp}{\mathfrak{p}}
\newcommand{\fr}{\mathfrak{r}}
\newcommand{\fn}{\mathfrak{n}}
\newcommand{\fm}{\mathfrak{m}}
\newcommand{\Ann}{\mathrm{Ann}}
\begin{document}

\title[]{Notes on the module of Euler systems}

\author{Ryotaro Sakamoto}

\begin{abstract} 
In this paper, we study the module of Euler systems. 
We determine the ideal of an Iwasawa algebra associated with Euler systems of rank $0$. 
We also show that  the module of higher rank Euler systems for $\bG_{m}$ over a totally real field is free of rank $1$ under the assumptions that Greenberg conjecture holds true and that the $\mu$-invariant of a certain Iwasawa module vanishes. 
\end{abstract}

\address{Center for Advanced Intelligence Project\\ RIKEN\\ Japan. } 
\email{ryotaro.sakamoto@riken.jp}

\subjclass[2020]{Primary: 11R23; Secondary: 11R34}
\keywords{Euler system, Iwasawa theory, Greenberg conjecture}

\maketitle

\setcounter{tocdepth}{1}

\tableofcontents

\section{Introduction}

The theory of Euler systems is a powerful tool for studying the relationship between Selmer groups and $L$-values.
While the importance of Euler systems is widely known, it seems that the structure of the module of Euler systems has not been well studied. 
In this paper, we study  the module of Euler systems.

\subsection{Euler systems of rank $0$}

To explain the  result we obtain in this paper, we introduce some notations. 
Throughout this paper,  $p$ denotes an odd prime and $k$ denotes a number field. 
We fix a finite set $S$ of places of $k$ satisfying $\{v \mid p\infty\} \subset S$. 
Let $\cO$ be a complete discrete valuation ring with residue characteristic $p$ and $T$ a free $\cO$-module of finite rank on which $G_{k,S} := \Gal(k_{S}/k)$ acts continuously, where $k_{S}$ is the maximal extension of $k$ which is unramified outside $S$. 
%Let   $k_{\infty}/k$ be a $\bZ_{p}^{s}$-extension such that $s \geq 1$ and no prime of $k$ splits completely in  $k_{\infty}$. We put 
%\[
%\Lambda := \cO[[\Gal(k_{\infty}/k)]] \,\,\, \text{ and } \,\,\, \bT := T \otimes_{\cO} \cO[[\Gal(k_{\infty}/k)]]^{\iota}. 
%\]
We also take a pro-$p$ abelian extension $\cK$ of $k$ satisfying $S_{\rm ram}(\cK/k) \cap S = \emptyset$. 
Let $\Omega$ denote the set of all finite extensions of $k$ in  $\cK$: 
\[
\Omega := \{K \mid k \subset K \subset \cK, \, [K \colon k] < \infty\}. 
\]
For any field $K \in \Omega$, we put 
\begin{align*}
\Lambda_{K} := \cO[[\Gal(k_{\infty}K/k)]] \,\,\, \textrm{ and } \,\,\,  \bT_{K} := \varprojlim_{n>0} \mathrm{Ind}_{G_{k}}^{G_{k_{n}K}}(T).  
\end{align*}
Here $k \subset k_{1} \subset k_{2} \subset \cdots $ is a sequence of finite extensions of $k$ with $\bigcup_{n>0}k_{n} = k_{\infty}$ 
and, for any field $L$, we denote by $G_{L} := \Gal(\overline{L}/L)$ the absolute Galois group of $L$.

\begin{definition} 
For any prime $\fq \not\in S$ of $k$, we define the Frobenius characteristic polynomial at $\fq$ by  
\[
P_{\fq}(x) := \det(1- x \cdot {\rm Fr}_{\fq} \mid T) \in \cO[x].
\]
We define the module ${\rm ES}_{0}(T)$ of Euler systems of rank $0$ by 
\[
{\rm ES}_{0}(T) := \left\{ (c_{K})_{K \in \Omega} \in \prod_{K \in \Omega}\Lambda_{K} \ \middle| \ 
\begin{array}{l}
\varphi_{K, L}(c_{K}) = \left(\prod_{\fq } P_{\fq}({\rm Fr}_{\fq}^{-1})\right) \cdot c_{L} 
\\
\textrm{ for any fields $K, L \in \Omega$ with $L \subset K$ } 
\end{array}
\right\}. 
\]
Here $\fq$ runs over the set $\mathrm{S}_{\rm ram}(K/k) \setminus \mathrm{S}_{\rm ram}(L/k)$ and $\varphi_{K, L} \colon \Lambda_{K} \longrightarrow \Lambda_{L}$ denotes the natural projection map. 
\end{definition}

In these settings, one of our main results are the following

\begin{theorem}[{Theorem \ref{thm:main1} and  Proposition \ref{prop:char-local}}]
For any filed $K \in \Omega$, we have 
\[
\langle c_{K} \mid (c_{L})_{L \in \Omega} \in {\rm ES}_{0}(T) \rangle_{\Lambda_{K}[1/p]} = \prod_{\fq \in S_{\rm ram}(K/k)} \mathrm{char}_{\Lambda_{K}}(H^{1}(G_{k_{\fq}}, \bT_{K})) \Lambda_{K}[1/p]. 
\]
Here $\mathrm{char}_{\Lambda_{K}}(H^{1}(G_{k_{\fq}}, \bT_{K})) \subset \Lambda_{K}$ denotes the characteristic ideal of $H^{1}(G_{k_{\fq}}, \bT_{K})$ defined in Definition \ref{def:char}. 

Moreover, we have $\langle c_{k} \mid (c_{L})_{L \in \Omega} \in {\rm ES}_{0}(T) \rangle = \Lambda_{k}$. 
\end{theorem}

\begin{remark}
For each integer $r \geq 1$, we write $\mathrm{ES}_{r}(T)$ for the modules of Euler systems of rank $r$ (see Definition~\ref{def:euler system}). 
One of the main results of the paper  \cite{MRkoly} by Mazur and Rubin, and the paper \cite{bss} by Burns, the author, and Sano shows that there is a natural homomorphism 
\[
\mathrm{ES}_{r}(T) \longrightarrow \mathrm{KS}_{r}(T). 
\]
Here $\mathrm{KS}_{r}(T)$ denotes  the modules of Kolyvagin systems of rank $r$ (see \cite[\S5.1]{bss}). 

Recently, in the paper \cite{koly0}, the author introduced the module $\mathrm{KS}_{0}(T)$ of Kolyvagin systems of rank $0$, based on the work of Kurihara in \cite{Kur12, Kur14a}. 
It is natural to expect that there exists a natural homomorphism 
\[
\mathrm{ES}_{0}(T) \longrightarrow \mathrm{KS}_{0}(T)
\]
 from previous studies. 
However,  Theorem \ref{thm:main1} shows that this expectation does not hold. 
In fact, if we have a natural homomorphism $\mathrm{ES}_{0}(T) \longrightarrow \mathrm{KS}_{0}(T)$, then the theory of rank $0$ Kolyvagin systems developed in \cite{koly0} shows that the ideal $\langle c_{k} \mid (c_{L})_{L \in \Omega} \in {\rm ES}_{0}(T) \rangle$ is contained in the initial Fitting ideal of an appropriate $\Lambda_{k}$-adic Selmer module, and hence Theorem \ref{thm:main1} implies that this ideal is equal to $\Lambda_{k}$. 
There are many examples where  this ideal is not trivial, so we conclude that we can not have a natural homomorphism $\mathrm{ES}_{0}(T) \longrightarrow \mathrm{KS}_{0}(T)$ in general. 

On the other hand, in the paper \cite{p-sel}, the author constructed  a rank $0$ Kolyvagin system from modular symbols, which is an Euler system of rank $0$. 
Hence the author expects that there is a subgroup $\cE(T) \subset \mathrm{ES}_{0}(T)$ with a natural homomorphism 
\[
\cE(T) \longrightarrow \mathrm{KS}_{0}(T). 
\]
\end{remark}

\subsection{The module of Euler systems for $\bG_{m}$}
Suppose that $k$ is a totally real field with $[k \colon \bQ] = r$. 
Let 
\[
\chi \colon \Gal(\overline{k}/k) \longrightarrow \overline{\bQ}^{\times}
\] 
be a non-trivial finite order even character and put $k_{\chi} := \overline{k}^{\ker(\chi)}$. 
Fix an embedding $\overline{\bQ} \longhookrightarrow \overline{\bQ}_{p}$. 
We also suppose that $p \nmid h_{k} \cdot [k_{\chi} \colon k]$, where $h_{k}$ denotes  the class number of $k$ and that 
\begin{align*}
S = \{v \mid p\infty \} \cup S_{\rm ram}(k_{\chi}/k), \,\,\, \cO = \bZ_{p}[\im(\chi)], \,\,\, \textrm{and} \,\,\, T = \cO(1) \otimes \chi^{-1}.
\end{align*} 
We write $M_{k_{\chi},\infty}$ for the maximal $p$-ramified pro-$p$ abelian extension of $k_{\chi}(\mu_{p^{\infty}})$. 
 We take $k_{\infty}/k$ to be the cyclotomic $\bZ_{p}$-extension. 
 Note that we have the canonical isomorphism 
 \[
 \Gal(k_{\chi}(\mu_{p^{\infty}})/k) \stackrel{\sim}{\longrightarrow} \Gal(k_{\infty}/k) \times \Gal(k_{\chi}(\mu_{p})/k) 
 \]
since  $p \nmid [k_{\chi} \colon k]$. 
We put  
\begin{align*}
X^{\chi}_{K} := e_{\chi}(\cO \otimes_{\bZ_{p}} \Gal(M_{k_{\chi},\infty}/k_{\chi}(\mu_{p^{\infty}}))), 
\end{align*}
where 
\[
e_{\chi} := [k_{\chi}(\mu_{p}) \colon k]^{-1} \sum_{\sigma \in \Gal(k_{\chi}(\mu_{p})/k)}\chi(\sigma)\sigma^{-1}. 
\]
%For each integer $s \geq 0$, we denote by ${\rm ES}_{s}(T)$ the module of Euler systems of rank $s$ for $T$ (see Definition~\ref{def:euler system}). 
%Let $\cK$ denote the maximal pro-$p$ abelian extension of $k$ satisfying $S_{\rm ram}(\cK/k) \cap (S_{p}(k) \cup S_{\rm ram}(k_{\chi}/k)) = \emptyset$, where, for an algebraic extension $K/k$, we denote by $S_{\rm ram}(K/k)$ the set of primes at which $K/k$ is ramified and $S_{p}(k)$ denotes the set of primes of $k$ above $p$. Then by definition, the module ${\rm ES}_{s}(T)$ has a natural $\cO[[\Gal(\cK/k)]]$-module structure. 
%
Under this setting, we show the following

\begin{theorem}[Theorem~\ref{thm:main}]\label{main}
Let $\fm$ denote the maximal ideal of $\cO$. 
Suppose that 
\begin{itemize}
\item  $H^{0}(G_{k_{\fp}}, T/\fm T) = H^{2}(G_{k_{\fp}},T/\fm T) = 0$  for any prime $\fp$ of $k$ above $p$, 
\item the module $X_{k}^{\chi}$ is a finitely generated $\bZ_{p}$-module, 
\item Greenberg conjecture (Conjecture~\ref{conj:greenberg}) holds true. 
\end{itemize}
Then the $\cO[[\Gal(k_{\infty}\cK/k)]]$-module ${\rm ES}_{r}(T)$ is free of rank $1$ and is generated by an Euler system related to $p$-adic $L$-functions. 
\end{theorem}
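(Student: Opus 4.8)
The plan is to analyze $\mathrm{ES}_r(T)$ by relating it to a Selmer-type complex over the Iwasawa algebra and using Euler system theory in the spirit of the work on higher rank Euler/Kolyvagin systems. First I would set up the Iwasawa-theoretic framework: let $\Lambda := \cO[[\Gal(\cK/k)]]$ and consider the $\Lambda$-adic cohomology of $T$ along the tower, packaging the local and global Galois cohomology into a perfect complex $C^{\bullet}$ whose cohomology controls the Selmer group. The key structural input is that, under the first hypothesis (the local conditions on $T/\fm T$ at $p$), the relevant local cohomology modules are of the expected rank and the complex $C^{\bullet}$ is concentrated in the correct degrees; this is what makes the module of rank-$r$ Euler systems behave like the determinant of the complex. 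Concretely, one expects an identification (or at least an injection with controlled cokernel) of $\mathrm{ES}_r(T)$ with a module of the form $\bigcap^r H^1_{\mathcal{F}}$ or with $\det_{\Lambda} C^{\bullet}$, reducing freeness of $\mathrm{ES}_r(T)$ to a statement about the determinant module.

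Next I would bring in the hypothesis that $X_k^{\chi}$ is $p$-torsion-free together with Greenberg's conjecture. Greenberg's conjecture, in this totally real setting, asserts the finiteness (indeed, conjecturally triviality after the $\mu = 0$ normalization built into the hypotheses) of a certain Iwasawa module — essentially the plus-part class group module stabilizes. The point of invoking it is to force the Selmer complex $C^{\bullet}$ to have no pseudo-null or torsion obstruction: the second cohomology $H^2(C^{\bullet})$ vanishes, so $C^{\bullet}$ is quasi-isomorphic to a complex $[\Lambda^a \to \Lambda^b]$ with injective differential, i.e.\ $H^1(C^{\bullet})$ is a torsion-free $\Lambda$-module of projective dimension $\le 1$ and the determinant $\det_{\Lambda} C^{\bullet}$ is a free $\Lambda$-module of rank one. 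The $p$-torsion-freeness of $X_k^{\chi}$ is exactly what is needed to rule out a small torsion piece appearing in $H^2$ at the bottom layer, so that the vanishing propagates. Combining these, $\mathrm{ES}_r(T) \cong \det_{\Lambda} C^{\bullet}$ is free of rank one over $\Lambda = \cO[[\Gal(\cK/k)]]$.

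Finally, for the assertion that a generator relates to $p$-adic $L$-functions, I would identify the determinant $\det_{\Lambda} C^{\bullet}$ using the main conjecture of Iwasawa theory for totally real fields (Wiles/Mazur–Wiles), which computes the characteristic ideal of the relevant Iwasawa module in terms of the Deligne–Ribet / Barsky $p$-adic $L$-function $L_p(\chi)$. Specifically, the image of a $\Lambda$-basis of $\mathrm{ES}_r(T)$ under the period/regulator map, or under the map to the rank-one localization, should be $L_p(\chi)$ up to a unit; the Euler system constructed from cyclotomic units (Rubin–Stark elements) provides an explicit such generator, and one checks it generates by a length/index computation comparing it against the free generator produced above.

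The main obstacle will be the second step: showing that Greenberg's conjecture plus the $p$-torsion-freeness hypothesis genuinely kill \emph{all} of $H^2(C^{\bullet})$ over the full algebra $\cO[[\Gal(\cK/k)]]$, not merely after inverting $p$ or over the cyclotomic line. The subtlety is that $\cK$ is a large (infinite rank) pro-$p$ extension, so $\Lambda$ is a large power series ring, and one must control the behavior of the Selmer complex under the many specializations — in particular ensuring that no pseudo-null submodule of $H^2$ survives and that the expected rank-one freeness is not spoiled by ramification at the auxiliary primes cut out in the definition of $\cK$. Handling this likely requires a careful descent argument layer by layer in the $\cK/k$ tower, using the local conditions at $p$ to guarantee the complex stays perfect of amplitude $[1,2]$ at every finite level, and then passing to the limit.
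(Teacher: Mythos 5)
There is a genuine gap, and it lies exactly at the point you flag as an afterthought: the ``identification (or at least an injection with controlled cokernel) of $\mathrm{ES}_r(T)$ with $\det_{\Lambda}C^{\bullet}$'' is not an input one can assume --- it is essentially the content of the theorem (and of the Coleman/Burns--Daoud--Sano--Seo conjecture in this setting). The determinant module (the module ${\rm VS}(T)$ of vertical determinantal systems) is free of rank one unconditionally, and it maps to $\mathrm{ES}_r(T)$; what is hard is the \emph{upper} bound, i.e.\ that every Euler system lies in this rank-one submodule. Your proposed mechanism does not supply this: vanishing of $H^2$ of a Selmer complex would at best control the image of the basic systems, and moreover the claim itself is wrong in detail --- Greenberg's conjecture here says only that the unramified module $Y_K^{\chi}$ is pseudo-null, while the relevant $H^2$-type module $X_K^{\chi}$ is genuinely nonzero torsion (its characteristic ideal is $f_K^{-1}L_{p,K}^{\chi}$ by Proposition~\ref{prop:char}), so no amount of the stated hypotheses makes it vanish. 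Greenberg's conjecture is used in the paper only to identify ${\rm char}_{\Lambda_K}(C_K)$ with ${\rm char}_{\Lambda_K}(X_K^{\chi})$ up to pseudo-isomorphism (via the sequence~\eqref{exact:2} and Corollary~\ref{cor:pseudo}), not to kill any cohomology.

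The missing ideas are the two steps the paper actually uses to get the upper bound. First, the hypothesis at $p$ gives $H^1(G_{k_p},\bT_K)\cong\Lambda_K^r$, so localization at $p$ injects $\mathrm{ES}_r(T)$ into $\mathrm{ES}_0(T)$ with image exactly $\mathrm{ES}_0(T)\cap\prod_K{\rm char}_{\Lambda_K}(C_K)$ (Proposition~\ref{prop:image}); this reduces everything to rank-zero systems, i.e.\ norm-compatible families of elements of $\Lambda_K$. Second, one needs the divisibility result Proposition~\ref{prop:key_div}: an induction on the ramified auxiliary primes, using only the Euler-system norm relations, showing that any rank-zero system in $\Lambda_K[1/p]$ is divisible by $f_K$; this is the genuinely Euler-theoretic ingredient that forces the prescribed zeros at characters ramified at primes of $S_{\rm ram}(K/k)$ and is entirely absent from your outline. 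Combining this with Wiles' main conjecture in the form ${\rm char}_{\Lambda_K}(X_{K,S}^{\chi})=L_{p,K}^{\chi}\Lambda_K$ (Proposition~\ref{prop:char-wiles}, via Kurihara's Fitting-ideal computation) and with the $p$-torsion-freeness of $X_{K,S}^{\chi}$ (Lemma~\ref{lemma:p-torsion}, this is where the hypothesis on $X_k^{\chi}$ enters, through Lemma~\ref{lemma:torsion}, to pass from $\Lambda_K\cap L_{p,K}^{\chi}\Lambda_K[1/p]$ to $L_{p,K}^{\chi}\Lambda_K$), one identifies the image of $\mathrm{ES}_r(T)$ in $\mathrm{ES}_0(T)$ with the free module $\cO[[\Gal(\cK/k)]]\cdot\{L_{p,K}^{\chi}\}_{K}$. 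Your closing worry about the large extension $\cK$ is legitimate, but the resolution is not a layer-by-layer descent of Selmer complexes; it is precisely the character-by-character induction of Proposition~\ref{prop:key_div} together with the integrality step above.
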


When $k = \bQ$, Theorem~\ref{main} is closely related to a conjecture about the universality of the circular distribution  proposed by Coleman. 
This conjecture implies that any Euler system of rank $1$ for $\bG_{m}$ over $\bQ$ is essentially made out of cyclotomic units. 
The Coleman conjecture was studied by Seo in  \cite{Seo1, Seo2} and  by Burns and Seo in \cite{BS}. 
The authors obtained strong evidence in support of the Coleman conjecture. 
Although Theorem~\ref{main} in the case that $k = \bQ$ was essentially proved by Seo in \cite[Theorem~A]{Seo1},  the proof of the main result of \cite{Seo1} has an error as mentioned in \cite[Remark~3.6]{BS}. Some arguments in \cite{Seo1} can only be corrected either by assuming certain Galois descent property on distributions or by inverting certain primes. 

Recently, in \cite{BDSS}, Burns, Daoud, Sano, and Seo formulated a natural generalization of the conjecture of Coleman, which assert that,  modulo minor technical issues concerning torsion, an Euler system of an appropriate rank for $\bG_{m}$ over a number field is basic (see \cite[Conjecture~2.5]{BDSS} for the detail). 
In the present paper, we do not give the definition of basic Euler systems. 
However, it is worth mentioning that, by using Theorem~\ref{main}, we can give a new evidence in support of \cite[Conjecture~2.5]{BDSS} (see Theorem~\ref{thm:main2}).

\subsection{Notation}

For any field $k$, we fix a separable closure $\overline{k}$ of $k$ and 
denote by $G_{k} := \Gal(\overline{k}/k)$ the absolute Galois group of $k$.

For a profinite group $G$ and a topological $G$-module $M$, let $C^{\bullet}(G,M)$ denote the complex of inhomogeneous continuous cochains of $G$ with values in $M$. 
We also denote the object in the derived category corresponding to the complex $C^{\bullet}(G,M)$ by ${\bf R}\Gamma(G,M)$. 
For each integer $i \geq 0$, we write $H^{i}(G,M)$ for its $i$-th cohomology group. 

For any algebraic extension $k/\bQ$ and places $v$ of $\bQ$, we denote by $S_{v}(k)$ the set of places of $k$ above $v$.  
%For a finite set $S$ of places of $k$, 
%we denote by $k_{S}$ the maximal extension of $k$ contained in $\overline{k}$ which is unramified outside $S$.  Set 
%\[
%G_{k,S} := \Gal(k_{S}/k). 
%\]
For any prime $\fq$ of $k$, we denote by $k_{\fq}$ the completion of $k$ at $\fq$. 
For any algebraic extension $K/k$, we denote by $S_{\rm ram}(K/k)$ the set of primes at which $K/k$ is ramified.

\subsection{Acknowledgments}

The author would like to express his gratitude to his supervisor Takeshi Tsuji for many helpful discussions. 
The author is also grateful to David Burns for helpful remarks on the conjecture of Coleman. 
%The author would also like to thank Masato Kurihara for helpful advice and comments.  
The author was supported by JSPS KAKENHI Grant Number 20J00456.

\section{Definition of Euler systems}\label{sec:euler}

In this section, we recall the definition of  Euler systems.  The contents of this section are based on \cite[\S2]{hres}.

First, let us fix some notations. 
As in the introduction, let $k$ be a number field and 
we fix a finite set $S$ of places of $k$ satisfying $S_{\infty}(k) \cup S_{p}(k) \subset S$. 
Let $\cO$ be a complete discrete valuation ring with maximal ideal $\fm$ and $T$ a free $\cO$-module of finite rank on which $G_{k,S}$ acts continuously. 
Suppose that 
\begin{itemize}
\item  $p$ is coprime to the class number of $k$, 
\item the characteristic of $\cO/\fm$ is  $p$, and 
\item  $H^{0}(G_{k,S}, T/\fm T)$ vanishes. 
\end{itemize}
Let   $k_{\infty}/k$ be a $\bZ_{p}^{s}$-extension such that $s \geq 1$ and no prime of $k$ splits completely in  $k_{\infty}$. We put 
\[
\Lambda := \cO[[\Gal(k_{\infty}/k)]] \,\,\, \text{ and } \,\,\, \bT := \varprojlim_{n>0} \mathrm{Ind}_{G_{k}}^{G_{k_{n}}}(T). 
\]
Here  $k \subset k_{1} \subset k_{2} \subset \cdots $ are finite extensions of $k$ with $\bigcup_{n>0}k_{n} = k_{\infty}$. 
We also take a pro-$p$ abelian extension $\cK \subset \overline{k}$ of $k$ satisfying $S_{\rm ram}(\cK/k) \cap S = \emptyset$. 
Recall that 
\[
\Omega = \{K \mid k \subset K \subset \cK, \, [K \colon k] < \infty\}. 
\]
For each field $K \in \Omega$, we put 
\begin{itemize}
\item $S_{K} := S \cup S_{\rm ram}(K/k)$, 
\item $\Lambda_{K} := \cO[[\Gal(k_{\infty}K/k)]]$, 
\item $\bT_{K} :=  \varprojlim_{n>0} \mathrm{Ind}_{G_{k}}^{G_{k_{n}K}}(T)$.  
\end{itemize}
Since $p$ is coprime to the class number of $k$, we have  the canonical isomorphism 
\[
\Gal(k_{\infty}K/k) \stackrel{\sim}{\longrightarrow} \Gal(k_{\infty}/k) \times \Gal(K/k). 
\]
In this paper, by using this isomorphism, we identify $\Lambda_{K}$ with the group ring $\Lambda[\Gal(K/k)]$.

%When $\fq \not\in S$, we define the Frobenius characteristic polynomial at $\fq$ by  
%\[
%P_{\fq}(x) := \det(1- x \cdot {\rm Fr}_{\fq} \mid T) \in \cO[x].
%\]

\begin{definition} 
For each field $K \in \Omega$,  let $M_{K}$ be a $\Lambda_{K}$-module. 
Suppose that $\{M_{K}\}_{K \in \Omega}$ is an inverse system of $\cO[[\Gal(k_{\infty}\cK/k)]]$-modules with transition maps  $\varphi_{K, L} \colon M_{K} \longrightarrow M_{L}$ for any $K, L \in \Omega$ with $L \subset K$. 
We then define a module ${\rm ES}(\{M_{K}\}_{K\in\Omega})$ to be  
\[
\left\{ (m_{K})_{K \in \Omega} \in \prod_{K \in \Omega}M_{K} \ \middle| \ 
\begin{array}{l}
\varphi_{K, L}(m_{K}) = \left(\prod_{\fq \in S_{K} \setminus S_{L}} P_{\fq}({\rm Fr}_{\fq}^{-1})\right) \cdot m_{L} 
\\
\textrm{ for any fields $K, L \in \Omega$ with $L \subset K$ } 
\end{array}
\right\}. 
\]
\end{definition}

\begin{definition}
For any commutative ring $R$ and any $R$-module $M$, 
we put 
\[
M^{*} := \Hom_{R}(M,R). 
\]  
For any integer $r \geq 0$, we define an $r$-th exterior bi-dual ${\bigcap}^{r}_{R}M$ of $M$ by  
\[
{\bigcap}^{r}_{R}M := \left({\bigwedge}^{r}_{R}(M^{*})\right)^{*}. 
\] 
\end{definition}

\begin{lemma}[{\cite[Lemma~2.5]{hres}}]\label{lem:inverse-system}
Let $r \geq 0$ be an integer. 
For any fields $K, L \in \Omega$ with $L \subset K$, the canonical homomorphism $\bT_{K} \longrightarrow \bT_{L}$ induces an $\cO[[\Gal(k_{\infty}\cK/k)]]$-homomorphism 
\[
{\bigcap}^{r}_{\Lambda_{K}}H^{1}(G_{k,S_{K}},\bT_{K}) \longrightarrow {\bigcap}^{r}_{\Lambda_{L}}H^{1}(G_{k,S_{L}},\bT_{L}). 
\]
Hence we have an inverse system of $\cO[[\Gal(k_{\infty}\cK/k)]]$-modules 
\[
\left\{{\bigcap}^{r}_{\Lambda_{K}}H^{1}(G_{k,S_{K}},\bT_{K})\right\}_{K\in \Omega}. 
\] 
\end{lemma}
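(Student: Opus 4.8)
The plan is to establish the two assertions of Lemma~\ref{lem:inverse-system} in sequence: first produce, for $L \subset K$ in $\Omega$, the required map on exterior bi-duals from the canonical corestriction-type map $\bT_K \to \bT_L$, and then verify the cocycle/compatibility condition that makes the collection an inverse system.

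First I would recall how the canonical map $\bT_K \to \bT_L$ arises. Since $\Lambda_K \cong \Lambda[\Gal(K/k)]$ and $L \subset K$, there is the natural surjection of group rings $\Lambda_K \twoheadrightarrow \Lambda_L$ coming from $\Gal(K/k) \twoheadrightarrow \Gal(L/k)$; applying the $\iota$-twist and tensoring with $T$ gives $\bT_K = T \otimes_\cO \Lambda_K^\iota \to T \otimes_\cO \Lambda_L^\iota = \bT_L$, which is $G_{k,S_K}$-equivariant. Since $S_L \subset S_K$, we also have the natural restriction $G_{k,S_K} \twoheadleftarrow G_{k,S_L}$ of Galois groups (i.e. $k_{S_L} \subset k_{S_K}$), so there is an inflation/functoriality map $H^1(G_{k,S_L}, \bT_L) \to H^1(G_{k,S_K}, \bT_L)$, and composing with the map induced by $\bT_K \to \bT_L$ one gets a map in one direction; but the map we actually want goes $H^1(G_{k,S_K},\bT_K) \to H^1(G_{k,S_L},\bT_L)$, so the correct construction is: the coefficient map $\bT_K \to \bT_L$ induces $H^1(G_{k,S_K},\bT_K) \to H^1(G_{k,S_K},\bT_L)$, and then one uses that $\bT_L$ is actually a $G_{k,S_L}$-module on which the larger group $G_{k,S_K}$ acts through its quotient — in fact $\bT_L$ is unramified outside $S_L$, so the $G_{k,S_K}$-action factors, wait, that is not literally true. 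The cleaner route, and the one I would take, is to observe that $H^1(G_{k,S_K}, \bT_L)$ and $H^1(G_{k,S_L}, \bT_L)$ are related by the inflation-restriction sequence for $1 \to \Gal(k_{S_K}/k_{S_L}) \to G_{k,S_K} \to G_{k,S_L} \to 1$; one does not get an isomorphism here in general, so instead I would appeal directly to the standard fact (as in \cite[\S2]{hres} or Rubin's book) that for an Euler-system module one works with $H^1(G_{k,S_K}, \bT_K)$ and the transition maps are built from corestriction together with the inclusion $S_L \subseteq S_K$; concretely, since $\bT_K \to \bT_L$ is the coefficient map and the Galois cohomology is taken over the same profinite group after inflating, the composite
\[
H^1(G_{k,S_K},\bT_K) \longrightarrow H^1(G_{k,S_K},\bT_L) \xleftarrow{\ \sim\ } H^1(G_{k,S_L},\bT_L)
\]
is legitimate because the second arrow is an isomorphism: indeed $H^0(G_{k,S_K}, T/\fm T) = H^0(G_{k,S}, T/\fm T) = 0$ by hypothesis, hence $H^0$ of the relevant modules vanishes and, together with the fact that the primes in $S_K \setminus S_L$ are unramified in $\bT_L$ in the appropriate sense, the inflation map on $H^1$ is an isomorphism. (This last point is exactly the kind of lemma proved in \cite{hres}; I would cite it rather than reprove it.)

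Second, having the map $H^1(G_{k,S_K},\bT_K) \to H^1(G_{k,S_L},\bT_L)$ of $\Lambda_K$-modules (with $\Lambda_L$ acting through $\Lambda_K \to \Lambda_L$), I would pass to exterior bi-duals. The point is purely functorial: a homomorphism $f \colon M \to N$ of $\Lambda_K$-modules, where $N$ is viewed as a $\Lambda_L$-module and $\Lambda_K \to \Lambda_L$, induces $\bigwedge^r_{\Lambda_L}(N^*) \to \bigwedge^r_{\Lambda_K}(M^*)$ by $\varphi_1 \wedge \cdots \wedge \varphi_r \mapsto (f^*\varphi_1) \wedge \cdots \wedge (f^*\varphi_r)$ — here one uses $N^* = \Hom_{\Lambda_L}(N,\Lambda_L)$ and the composite $M \xrightarrow{f} N \xrightarrow{\varphi} \Lambda_L$ together with the base-change $\Lambda_K \to \Lambda_L$ to land in $\Hom_{\Lambda_K}(M,\Lambda_L)$, which maps to $M^* \otimes_{\Lambda_K}\Lambda_L$; a small amount of care with the bi-dual is needed, but dualizing once more yields the desired map
\[
{\bigcap}^r_{\Lambda_K} M = \bigl(\textstyle\bigwedge^r_{\Lambda_K}(M^*)\bigr)^* \longrightarrow \bigl(\textstyle\bigwedge^r_{\Lambda_L}(N^*)\bigr)^* = {\bigcap}^r_{\Lambda_L} N,
\]
and $\cO[[\Gal(k_\infty\cK/k)]]$-equivariance is automatic because every map in sight is. I would spell out just enough of this to be convinced there is no sign or direction issue, then move on.

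Finally, for the inverse-system claim, I would check the transitivity/compatibility: for $M \subset L \subset K$ in $\Omega$, the composite $\bT_K \to \bT_L \to \bT_M$ on coefficients equals $\bT_K \to \bT_M$ (clear, since it is induced by the composite of group-ring surjections), and the inflation isomorphisms compose correctly, so the induced maps on bi-duals compose correctly; hence $\{\bigcap^r_{\Lambda_K} H^1(G_{k,S_K},\bT_K)\}_{K}$ with these transition maps is an inverse system of $\cO[[\Gal(k_\infty\cK/k)]]$-modules. The main obstacle, and the only non-formal input, is the claim that the inflation map $H^1(G_{k,S_L},\bT_L) \to H^1(G_{k,S_K},\bT_L)$ is an isomorphism — equivalently that $H^1(\Gal(k_{S_K}/k_{S_L}), \bT_L^{G_{k,S_K}})$ and $H^1(\Gal(k_{S_K}/k_{S_L}),\bT_L)^{G_{k,S_L}}$-type obstruction terms vanish; this rests on the vanishing of $H^0(G_{k,S},T/\fm T)$ (hence of $H^0$ with $\bT_L$-coefficients, by a standard limit argument over the Iwasawa tower and Nakayama) and on the fact that the extra primes are unramified, exactly as in \cite[Lemma~2.5]{hres}, whose proof I would follow.
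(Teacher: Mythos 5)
The paper does not actually reprove this statement: it is quoted verbatim from \cite[Lemma~2.5]{hres}, so your proposal has to be measured against the construction given there. Your overall skeleton (apply the coefficient map $\bT_{K}\to\bT_{L}$, identify $H^{1}(G_{k,S_{K}},\bT_{L})$ with $H^{1}(G_{k,S_{L}},\bT_{L})$, then pass to exterior bi-duals) is the right shape, but the two steps that carry the entire content of the lemma are exactly the ones you treat as formal, and neither is established by what you wrote.

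The main gap is your second paragraph. Exterior bi-duals are \emph{not} functorial for a map $f\colon M\to N$ when $M$ is a $\Lambda_{K}$-module and $N$ a $\Lambda_{L}$-module: for $\varphi\in\Hom_{\Lambda_{L}}(N,\Lambda_{L})$ the composite $\varphi\circ f$ lies in $\Hom_{\Lambda_{K}}(M,\Lambda_{L})$, not in $M^{*}=\Hom_{\Lambda_{K}}(M,\Lambda_{K})$, and the canonical comparison map goes $M^{*}\otimes_{\Lambda_{K}}\Lambda_{L}\to\Hom_{\Lambda_{K}}(M,\Lambda_{L})$, not in the direction you need; since $\Lambda_{K}\to\Lambda_{L}$ is not flat, there is in general no canonical map ${\bigcap}^{r}_{\Lambda_{K}}M\to{\bigcap}^{r}_{\Lambda_{L}}(M\otimes_{\Lambda_{K}}\Lambda_{L})$ at all. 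This failure is precisely why \cite[Lemma~2.5]{hres} is not a triviality: the construction there uses that $H^{0}(G_{k,S},T/\fm T)=0$ forces $H^{0}(G_{k,S_{K}},\bT_{K})=0$, so that ${\bf R}\Gamma(G_{k,S_{K}},\bT_{K})$ is a perfect complex acyclic outside degrees $1$ and $2$, together with the derived base-change isomorphism ${\bf R}\Gamma(G_{k,S_{K}},\bT_{K})\otimes^{\bf L}_{\Lambda_{K}}\Lambda_{L}\simeq{\bf R}\Gamma(G_{k,S_{K}},\bT_{L})$; the transition map on bi-duals is then defined via a representative of this complex by finitely generated free (projective) modules, following the Burns--Sano formalism, and one checks independence of choices. ``Dualizing once more'' as you propose does not produce a well-defined map, so the heart of the lemma is missing from your argument.

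The second, smaller, gap concerns the identification $H^{1}(G_{k,S_{L}},\bT_{L})\xrightarrow{\ \sim\ }H^{1}(G_{k,S_{K}},\bT_{L})$. The statement itself is correct in this setting, but your justification (vanishing of $H^{0}(G_{k,S},T/\fm T)$) is not the reason and cannot be: that is a global hypothesis, while the obstruction to the inflation map being an isomorphism is local at the primes $\fq\in S_{K}\setminus S_{L}$, namely the singular terms $H^{1}(I_{\fq},\bT_{L})^{\mathrm{Fr}_{\fq}}$. These vanish because of the standing hypothesis from \S\ref{sec:euler} that no prime of $k$ splits completely in $k_{\infty}$: the image of $\mathrm{Fr}_{\fq}$ in $\Gal(k_{\infty}L/k)$ then has infinite order, so $1-\mathrm{Fr}_{\fq}$ acts as a nonzerodivisor on the free $\Lambda_{L}$-module $\bT_{L}(-1)\cong H^{1}(I_{\fq},\bT_{L})$, and the invariants are zero. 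Without invoking this hypothesis (or arguing at the level of complexes with the semi-local terms at $S_{K}\setminus S_{L}$), this step of your proposal is unsupported.
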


\begin{definition}\label{def:euler system}
For any integer $r \geq 0$, 
we define the module ${\rm ES}_{r}(T)$ of Euler systems of rank $r$ (for $T$) by  
\[
{\rm ES}_{r}(T) := {\rm ES}\left(\left\{{\bigcap}^{r}_{\Lambda_{K}}H^{1}(G_{k,S_{K}},\bT_{K})\right\}_{K \in \Omega}\right). 
\]
\end{definition}

\section{Euler systems of rank $0$}

We use the same notation as in \S\ref{sec:euler}. 
Let $K, L \in \Omega$ be fields with $L \subset K$. 
To simplify the notation, we  set 
\begin{itemize}
\item $\cG_{K} := \Hom(\Gal(K/k), \overline{\bQ}^{\times})$,  and 
\item $\Lambda_{K, \overline{\bQ}_{p}} := \Lambda_{K} \otimes_{\cO}\overline{\bQ}_{p}$. 
\end{itemize}
We then have the canonical injection  $\cG_{L} \longhookrightarrow \cG_{K}$. 
Hence we identify $\cG_{L}$ with the subgroup $\{\psi \in \cG_{K} \mid \psi(\Gal(K/L)) = 1\}$ of $\cG_{K}$ by using this injection. 
For each character $\psi \in \cG_{K}$, we set 
\[
e_{K,\psi} := \frac{1}{[K \colon k]} \sum_{g \in \Gal(K/k)}\psi(g)g^{-1} \in \overline{\bQ}_{p}[\Gal(K/k)]. 
\]
We note that 
\[
\Lambda_{K, \overline{\bQ}_{p}}=  \prod_{\psi \in \cG_{K}}\Lambda_{K, \overline{\bQ}_{p}}e_{K,\psi}  
\]
and $\Lambda_{K, \overline{\bQ}_{p}}e_{K,\psi} \cong \Lambda \otimes_{\cO} \overline{\bQ}_{p}$ is a principal ideal domain for any character $\psi \in \cG_{K}$. 
We write  $\pi_{K,L} \colon \Lambda_{K, \overline{\bQ}_{p}} \longrightarrow \Lambda_{L, \overline{\bQ}_{p}}$ for the canonical projection. 
For any character $\psi \in \cG_{L}$, we have 
\[
\pi_{K,L}(e_{K,\psi}) = e_{L, \psi}. 
\]
Hence the homomorphism $\pi_{K,L}$ induces an isomorphism 
\[
\Lambda_{K, \overline{\bQ}_{p}}e_{K,\psi} \stackrel{\sim}{\longrightarrow} \Lambda_{L, \overline{\bQ}_{p}}e_{L,\psi}. 
\]

Let $\fq \not\in S$ be a prime of $k$. 
We write $\cI_{K,\fq}$ for the inertia subgroup of  $\Gal(k_{\infty}K/k)$ at $\fq$. We then have the arithmetic Frobenius element ${\rm Fr}_{\fq} \in \Gal(k_{\infty}K/k)/\cI_{K,\fq}$. 
Since $k_{\infty}/k$ is unramified at $\fq \nmid p$, 
the canonical homomorphism $\cI_{K,\fq} \longrightarrow \Gal(K/k)$ is injective. 
Hence we can identify $\cI_{K,\fq}$ with the inertia subgroup of $\Gal(K/k)$ at $\fq$.

%\begin{definition}
%We define an ideal $\cF_{K, \fq} \subset \Lambda_{K}$ by 
%\[
%\cF_{K, \fq} := \ker\left(\Lambda_{K} \longrightarrow \Lambda_{K^{\cI_{K,\fq}}}/(P_{\fq}({\rm Fr}_{\fq}^{-1}))\right)
%\]
%We set 
%\[
%\cF_{K} := \prod_{\fq \in S_{\rm ram}(K/k)}\cF_{K, \fq}. 
%\]
%\end{definition}

\begin{definition}
Let $\fq \not\in S$ be a prime of $k$.  
We define an element $f_{K,\fq} \in \Lambda_{K, \overline{\bQ}_{p}}$ by 
\[
f_{K,\fq}e_{K,\psi} := 
\begin{cases}
e_{K,\psi} & \textrm{ if } \,\,\, \psi(\cI_{K,\fq}) \neq 1, 
\\
P_{\fq}({\rm Fr}_{\fq}^{-1})e_{K,\psi} & \textrm{ if } \,\,\, \psi(\cI_{K,\fq}) = 1 
\end{cases}
\]
for each character $\psi \in \cG_{K}$. 
We put 
\[
f_{K} := \prod_{\fq \in S_{\rm ram}(K/k)}f_{K,\fq}. 
\]
Note that $f_{K} \in \Lambda_{K}[1/p]$ and $f_{K}$ is a regular element of $\Lambda_{K}[1/p]$. 
\end{definition}

\begin{lemma}\label{lemma:euler-rel}
Let  $K, L \in \Omega$ be fields satisfying $L \subset K$. 
\begin{itemize}
\item[(i)] For any prime $\fq \not\in S$ of $k$, we have 
\[
\pi_{K, L}(f_{K,\fq}) = f_{L,\fq}.  
\]
\item[(ii)]
We have 
\[
\pi_{K, L}(f_{K}) = \left(\prod_{\fq \in S_{\rm ram}(K/k) \setminus S_{\rm ram}(L/k)}P_{\fq}({\rm Fr}_{\fq}^{-1})\right)f_{L}. 
\]
Hence $(f_{K})_{K \in \Omega} \in  {\rm ES}(\{\Lambda_{K}[1/p]\}_{K \in \Omega})$. 
\end{itemize}
\end{lemma}
\begin{proof}
Let us show the claim (i). 
Let $\fq \not\in S$ be a prime. It suffices to show that 
\[
\pi_{K,L}(f_{K,\fq})e_{L,\psi} = f_{L,\fq}e_{L,\psi} 
\]
for each character $\psi \in \cG_{L} \subset \cG_{K}$. 
Since $\psi \in \cG_{L}$, we have $\psi(\Gal(K/L)) = 1$, and hence $\psi(\cI_{K,\fq}) = 1$ if and only if  $\psi(\cI_{L,\fq}) = 1$. 
Since $\pi_{K,L}(e_{K,\psi}) = e_{L,\psi}$, the definitions of $f_{K,\fq}$ and $f_{L,\fq}$ shows that 
$\pi_{K,L}(f_{K,\fq})e_{L,\psi} = \pi_{K,L}(f_{K,\fq}e_{K,\psi}) = f_{L,\fq}e_{L,\psi}$. 

Since $f_{L,\fq} = P_{\fq}(\mathrm{Fr}_{\fq}^{-1})$ for any prime $\fq \not\in S_{L}$ of $k$, claim (ii) immediately follows  from claim (i). 
\end{proof}

\begin{proposition}\label{prop:key_div}
For any system $(c_{K})_{K \in \Omega} \in  {\rm ES}(\{\Lambda_{K}[1/p]\}_{K \in \Omega})$, we have 
\[
c_{K} \in f_{K}  \Lambda_{K}[1/p]
\]
for any field $K \in \Omega$. 
\end{proposition}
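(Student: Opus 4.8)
The plan is to reduce the divisibility statement over $\Lambda_K[1/p]$ to a statement about each character component $\Lambda_{K,\overline{\bQ}_p}e_{K,\psi}$, where the ring is a principal ideal domain and divisibility can be checked by comparing "orders" of the element $c_K$ and the element $f_K$. After extending scalars to $\overline{\bQ}_p$, we have $\Lambda_{K,\overline{\bQ}_p} = \prod_{\psi\in\cG_K}\Lambda_{K,\overline{\bQ}_p}e_{K,\psi}$, so it suffices to show $c_Ke_{K,\psi}\in f_Ke_{K,\psi}\cdot\Lambda_{K,\overline{\bQ}_p}e_{K,\psi}$ for every $\psi$, together with a descent/integrality argument to come back from $\overline{\bQ}_p$-coefficients to $\Lambda_K[1/p]$ (the element $c_K$ already lives in $\Lambda_K[1/p]$, and $f_K$ is a regular element of $\Lambda_K[1/p]$ by the preceding definition, so once we know $f_K \mid c_K$ after base change to $\overline{\bQ}_p$, a standard faithful-flatness argument recovers the divisibility over $\Lambda_K[1/p]$).

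For a fixed $\psi\in\cG_K$, let $L=L_\psi\in\Omega$ be the fixed field of $\ker\psi$, so that $\psi$ factors through $\Gal(L/k)$ and $\psi\in\cG_L$. The key point is to identify $f_Ke_{K,\psi}$ explicitly. By definition $f_K=\prod_{\fq\in S_K\setminus S}f_{K,\fq}$, and $f_{K,\fq}e_{K,\psi}$ equals $e_{K,\psi}$ when $\psi$ is ramified at $\fq$ and $P_\fq({\rm Fr}_\fq^{-1})e_{K,\psi}$ otherwise. Since $\psi$ is ramified at $\fq$ precisely for the primes in $S_K\setminus S_L$ that lie in $S_{\rm ram}(K/k)$ but not below $L$ — more precisely, for $\fq$ with $\psi(\cI_{K,\fq})\neq 1$ — one gets, after using Lemma~\ref{lemma:euler-rel} and $\pi_{K,L}(e_{K,\psi})=e_{L,\psi}$, that up to a unit in $\Lambda_{K,\overline{\bQ}_p}e_{K,\psi}$ the element $f_Ke_{K,\psi}$ corresponds via the isomorphism $\Lambda_{K,\overline{\bQ}_p}e_{K,\psi}\cong\Lambda_{L,\overline{\bQ}_p}e_{L,\psi}$ to $f_Le_{L,\psi}$. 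Now apply the Euler-system relation for $(c_K)_{K\in\Omega}$: $\pi_{K,L}(c_K)=\bigl(\prod_{\fq\in S_K\setminus S_L}P_\fq({\rm Fr}_\fq^{-1})\bigr)c_L$. Comparing this with Lemma~\ref{lemma:euler-rel} for $f$ shows that the ratio $c_K/f_K$ (in the fraction field of the PID $\Lambda_{K,\overline{\bQ}_p}e_{K,\psi}$) maps under $\pi_{K,L}$ to $c_L/f_L$, so one is reduced to the case $K=L_\psi$, i.e. to the case where $\psi$ is ramified at \emph{no} prime in $S_K\setminus S$; in that case $f_K e_{K,\psi}=\bigl(\prod_{\fq\in S_K\setminus S}P_\fq({\rm Fr}_\fq^{-1})\bigr)e_{K,\psi}$ is visibly an element of $\Lambda_Ke_{K,\psi}$ and the divisibility $c_Ke_{K,\psi}\in f_Ke_{K,\psi}\Lambda_{K,\overline{\bQ}_p}e_{K,\psi}$ becomes: $c_Ke_{K,\psi}$ is divisible by $\prod_{\fq\in S_K\setminus S}P_\fq({\rm Fr}_\fq^{-1})$.

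To establish this last divisibility I would argue one prime $\fq\in S_K\setminus S$ at a time. Fix such a $\fq$, choose $K'\in\Omega$ with $K'/k$ ramified at $\fq$ and with $\psi'\in\cG_{K'}$ restricting to $\psi$ but ramified at $\fq$; then in $\Lambda_{K',\overline{\bQ}_p}e_{K',\psi'}$ one has $f_{K'}e_{K',\psi'}$ missing the factor $P_\fq({\rm Fr}_\fq^{-1})$, and the Euler-system relation between levels $K'$ and $K$ (or between $K'$ and $L_\psi$) reads $\pi_{K',K}(c_{K'})=P_\fq({\rm Fr}_\fq^{-1})\cdot(\text{other }P\text{-factors})\cdot c_K$. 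Using that $\pi_{K',K}$ restricts to an isomorphism $\Lambda_{K',\overline{\bQ}_p}e_{K',\psi'}\xrightarrow{\sim}\Lambda_{K,\overline{\bQ}_p}e_{K,\psi}$, the element $c_{K'}e_{K',\psi'}$ maps isomorphically to $\pi_{K',K}(c_{K'})e_{K,\psi}$, which is divisible by $P_\fq({\rm Fr}_\fq^{-1})$; pulling this back and iterating over all $\fq\in S_K\setminus S$ (taking a large enough $K'$ ramified at all of them, and being careful that the various $P_\fq({\rm Fr}_\fq^{-1})$ are pairwise coprime in the PID, or rather multiplying the divisibilities) gives $\prod_{\fq\in S_K\setminus S}P_\fq({\rm Fr}_\fq^{-1})\mid c_Ke_{K,\psi}$ in $\Lambda_{K,\overline{\bQ}_p}e_{K,\psi}$. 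The main obstacle I anticipate is the bookkeeping in this last step: one must choose the auxiliary fields $K'\in\Omega$ so that the relevant characters $\psi'$ exist with prescribed ramification (this uses that $\cK$ is the \emph{maximal} pro-$p$ abelian extension with $S_{\rm ram}(\cK/k)\cap S=\emptyset$, so there is ramification available at every $\fq\notin S$), and one must handle the possibility that distinct primes $\fq$ contribute $P$-factors that share common divisors in $\Lambda_{K,\overline{\bQ}_p}e_{K,\psi}$ — in the PID this only means the total divisibility must be assembled by taking one $K'$ ramified at all the $\fq$ simultaneously rather than combining separate divisibilities naively. Finally, I would note that the passage back from $\overline{\bQ}_p$-coefficients to $\Lambda_K[1/p]$ is harmless because $\Lambda_K[1/p]\to\Lambda_{K,\overline{\bQ}_p}$ is faithfully flat and $f_K$ is regular in $\Lambda_K[1/p]$.
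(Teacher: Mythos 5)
Your overall skeleton (decompose over characters, use that $\pi_{K,L}$ is an isomorphism on $\psi$-components, and play the Euler relation for $(c_K)$ against Lemma~\ref{lemma:euler-rel} for $(f_K)$) is the right one and is close to the paper's, but the ramification bookkeeping in your second paragraph is inverted, and this creates a phantom difficulty that your third paragraph then tries to resolve by an argument that genuinely fails. Concretely: for $L=L_\psi=K^{\ker\psi}$ one has $\psi(\cI_{K,\fq})\neq 1$ exactly when $\fq$ ramifies in $L/k$, i.e.\ for $\fq\in S_{L}\setminus S$, \emph{not} for $\fq\in S_{K}\setminus S_{L}$. Hence $f_{L}e_{L,\psi}=e_{L,\psi}$ (every prime of $S_L\setminus S$ ramifies in $L$ and $\psi$ is faithful on $\Gal(L/k)$), and $\pi_{K,L}(f_{K}e_{K,\psi})=\bigl(\prod_{\fq\in S_{K}\setminus S_{L}}P_{\fq}({\rm Fr}_{\fq}^{-1})\bigr)e_{L,\psi}$, so $f_{K}e_{K,\psi}$ does \emph{not} correspond to $f_{L}e_{L,\psi}$ up to a unit. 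With the correct bookkeeping your own reduction already finishes the proof: the isomorphism on $\psi$-components together with $\pi_{K,L}(c_{K})=\bigl(\prod_{\fq\in S_{K}\setminus S_{L}}P_{\fq}({\rm Fr}_{\fq}^{-1})\bigr)c_{L}$ and Lemma~\ref{lemma:euler-rel} reduces the claim to $c_{L}e_{L,\psi}\in f_{L}\Lambda_{L,\overline{\bQ}_{p}}e_{L,\psi}=\Lambda_{L,\overline{\bQ}_{p}}e_{L,\psi}$, which is vacuous; there is no residual divisibility by $\prod_{\fq\in S_{K}\setminus S}P_{\fq}({\rm Fr}_{\fq}^{-1})$ left to prove. (The paper organizes the same descent as an induction on $\#S_{\rm ram}(K/k)$, passing to $L=K^{\cI_{K,\fq}}$ one prime at a time.)

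The third paragraph, as written, cannot be repaired in its current form. First, for $\fq\in S_{K}\setminus S$ the prime $\fq$ already lies in $S_{K}$, so the Euler relation between a larger field $K'$ and $K$ reads $\pi_{K',K}(c_{K'})=\bigl(\prod_{\fl\in S_{K'}\setminus S_{K}}P_{\fl}({\rm Fr}_{\fl}^{-1})\bigr)c_{K}$ and contains no factor $P_{\fq}({\rm Fr}_{\fq}^{-1})$: Euler factors appear only at primes ramified in $K'$ but not in $K$. Second, $\pi_{K',K}$ induces an isomorphism on the $e_{K',\psi'}$-component only when $\psi'$ factors through $\Gal(K/k)$, i.e.\ $\psi'=\psi$; if $\psi'|_{\Gal(K'/K)}\neq 1$ then $\pi_{K',K}(e_{K',\psi'})=0$. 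A character $\psi'$ that restricts to $\psi$ in the sense needed for this isomorphism automatically satisfies $\psi'(\cI_{K',\fq})=\psi(\cI_{K,\fq})$, since $\cI_{K',\fq}$ surjects onto $\cI_{K,\fq}$, so it cannot be ramified at $\fq$ when $\psi$ is not. In short, the divisibility by $P_{\fq}({\rm Fr}_{\fq}^{-1})$ at characters with $\psi(\cI_{K,\fq})=1$ comes from descending to the subfield where $\fq$ is unramified, not from ascending to auxiliary fields with extra ramification; your final base-change remark (faithful flatness of $\Lambda_{K}[1/p]\to\Lambda_{K,\overline{\bQ}_{p}}$) is fine and matches the paper.
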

\begin{proof}
Take a field $K \in \Omega$. 
Let us prove this proposition by induction on $d := \#S_{\rm ram}(K/k)$. 
When $d = 0$, we have $f_{K} = 1$, and hence $c_{K} \in f_{K}  \Lambda_{K}[1/p]$. 

Suppose that $d > 0$. 
Take a character $\psi \in \cG_{K}$. 
Since the $\Lambda_{K}[1/p]$-algebra $\Lambda_{K, \overline{\bQ}_{p}}$ is faithfully flat, 
we only need to show that 
\begin{align}\label{claim1}
c_{K}e_{K,\psi} \in f_{K} \Lambda_{K, \overline{\bQ}_{p}} e_{K,\psi}. 
\end{align}
If $\psi(\cI_{K,\fq}) \neq 1$ for any prime $\fq \in S_{\rm ram}(K/k)$, then we have $f_{K}e_{K,\psi} = e_{K,\psi}$, and hence $c_{K}e_{K,\psi} \in f_{K}  \Lambda_{K, \overline{\bQ}_{p}} e_{K,\psi}$. 

Suppose that there is a prime $\fq \in S_{\rm ram}(K/k)$ with $\psi(\cI_{K,\fq}) = 1$. 
Put $L := K^{\cI_{K,\fq}}$. 
Note that $\psi \in \cG_{L}$. 
Since $\pi_{K,L}$ induces an isomorphism 
$\Lambda_{K, \overline{\bQ}_{p}}e_{K,\psi} \stackrel{\sim}{\longrightarrow} \Lambda_{L, \overline{\bQ}_{p}}e_{L,\psi}$, 
the claim~\eqref{claim1} is equivalent to that 
\[
\pi_{K, L}(c_{K})e_{L, \psi} \in \pi_{K,L}(f_{K})  \Lambda_{L,\overline{\bQ}_{p}}e_{L, \psi}. 
\]
The definition of Euler systems shows that 
\[
\pi_{K, L}(c_{K}) = \left(\prod_{\fq \in S_{\rm ram}(K/k) \setminus S_{\rm ram}(L/k)}P_{\fq}({\rm Fr}_{\fq}^{-1}) \right) c_{L}. 
\]
Furthermore, since $\fq \not\in S_{\rm ram}(L/k)$, 
we have $\# S_{\rm ram}(L/k) \leq  d - 1$. 
Hence the induction hypothesis shows $c_{L} \in f_{L} \Lambda_{L}[1/p]$. 
Therefore, by Lemma~\ref{lemma:euler-rel}, we conclude that 
\begin{align*}
\pi_{K, L}(c_{K})e_{L, \psi} 
&= \left(\prod_{\fq \in S_{\rm ram}(K/k) \setminus S_{\rm ram}(L/k)}P_{\fq}({\rm Fr}_{\fq}^{-1}) \right) c_{L} e_{L,\psi}
\\
&\in \left(\prod_{\fq \in S_{\rm ram}(K/k) \setminus S_{\rm ram}(L/k)}P_{\fq}({\rm Fr}_{\fq}^{-1}) \right) f_{L} \Lambda_{L, \overline{\bQ}_{p}} e_{L, \psi}
\\
&= \pi_{K,L}(f_{K})  \Lambda_{L, \overline{\bQ}_{p}} e_{L, \psi}. 
\end{align*}
\end{proof}

\begin{corollary}
The homomorphism 
\begin{align}\label{hom}
\varprojlim_{K \in \Omega} \Lambda_{K}[1/p] \longrightarrow {\rm ES}(\{\Lambda_{K}[1/p]\}_{K \in \Omega}); \, (\lambda_{K})_{K \in \Omega} \mapsto (\lambda_{K}f_{K})_{K\in \Omega}
\end{align}
is an isomorphism. 
\end{corollary}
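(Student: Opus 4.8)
The plan is to verify that the map \eqref{hom} is both injective and surjective, using Proposition~\ref{prop:key_div} for surjectivity and the fact that each $f_K$ is a regular element of $\Lambda_K[1/p]$ for injectivity. First I would observe that the assignment $(\lambda_K)_{K} \mapsto (\lambda_K f_K)_K$ genuinely lands in ${\rm ES}(\{\Lambda_K[1/p]\}_{K\in\Omega})$: if $(\lambda_K)_K \in \varprojlim_K \Lambda_K[1/p]$, meaning $\pi_{K,L}(\lambda_K) = \lambda_L$ for $L \subset K$, then by Lemma~\ref{lemma:euler-rel} we get $\pi_{K,L}(\lambda_K f_K) = \lambda_L \cdot \pi_{K,L}(f_K) = \bigl(\prod_{\fq \in S_K \setminus S_L} P_{\fq}({\rm Fr}_{\fq}^{-1})\bigr)\lambda_L f_L$, which is exactly the Euler system relation. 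So the map is well-defined (this is essentially the content of Lemma~\ref{lemma:euler-rel} already).

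For injectivity: suppose $(\lambda_K f_K)_K = 0$, i.e.\ $\lambda_K f_K = 0$ in $\Lambda_K[1/p]$ for every $K$. Since $f_K$ is a regular (non-zero-divisor) element of $\Lambda_K[1/p]$, this forces $\lambda_K = 0$ for each $K$, hence $(\lambda_K)_K = 0$. For surjectivity: given $(c_K)_K \in {\rm ES}(\{\Lambda_K[1/p]\}_{K\in\Omega})$, Proposition~\ref{prop:key_div} tells us that for each $K$ there is $\lambda_K \in \Lambda_K[1/p]$ with $c_K = \lambda_K f_K$, and $\lambda_K$ is uniquely determined because $f_K$ is regular. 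It remains to check that the tuple $(\lambda_K)_K$ lies in $\varprojlim_K \Lambda_K[1/p]$, i.e.\ that $\pi_{K,L}(\lambda_K) = \lambda_L$ whenever $L \subset K$. Applying $\pi_{K,L}$ to $c_K = \lambda_K f_K$ and using Lemma~\ref{lemma:euler-rel} gives $\pi_{K,L}(c_K) = \pi_{K,L}(\lambda_K)\cdot\bigl(\prod_{\fq\in S_K\setminus S_L}P_{\fq}({\rm Fr}_{\fq}^{-1})\bigr) f_L$; on the other hand the Euler system relation gives $\pi_{K,L}(c_K) = \bigl(\prod_{\fq\in S_K\setminus S_L}P_{\fq}({\rm Fr}_{\fq}^{-1})\bigr) c_L = \bigl(\prod_{\fq\in S_K\setminus S_L}P_{\fq}({\rm Fr}_{\fq}^{-1})\bigr)\lambda_L f_L$. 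Since $f_L$ is regular and the product $\prod_{\fq\in S_K\setminus S_L}P_{\fq}({\rm Fr}_{\fq}^{-1})$ is also a non-zero-divisor in $\Lambda_L[1/p]$ (each $P_{\fq}({\rm Fr}_{\fq}^{-1})$ maps to a unit after inverting the relevant Euler factor, or more simply one checks directly that it is regular), we may cancel both to conclude $\pi_{K,L}(\lambda_K) = \lambda_L$.

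The main obstacle I anticipate is the cancellation step in the surjectivity argument: one needs to know that $\prod_{\fq\in S_K\setminus S_L}P_{\fq}({\rm Fr}_{\fq}^{-1})$ is a non-zero-divisor in $\Lambda_L[1/p]$ so that it can be cancelled. This reduces, via the decomposition $\Lambda_{L,\overline{\bQ}_p} = \prod_{\psi\in\cG_L}\Lambda_{L,\overline{\bQ}_p}e_{L,\psi}$ into a product of copies of the PID $\Lambda\otimes_{\cO}\overline{\bQ}_p$, to checking that $P_{\fq}({\rm Fr}_{\fq}^{-1})$ has nonzero image in each factor; this in turn holds because no prime of $k$ splits completely in $k_\infty/k$, so the image of ${\rm Fr}_{\fq}$ in $\Gal(k_\infty/k)$ is nontrivial and $P_{\fq}({\rm Fr}_{\fq}^{-1})$, whose constant term is $1$, cannot vanish identically. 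Everything else — well-definedness, injectivity, and the bookkeeping with the projections $\pi_{K,L}$ — is a routine consequence of Lemma~\ref{lemma:euler-rel} and Proposition~\ref{prop:key_div}, and I would present the whole corollary in just a few lines.
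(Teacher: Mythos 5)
Your proof is correct and follows essentially the same route as the paper: well-definedness from Lemma~\ref{lemma:euler-rel}, injectivity from the regularity of $f_{K}$ in $\Lambda_{K}[1/p]$, and surjectivity from Proposition~\ref{prop:key_div}. The only difference is that you spell out the norm-compatibility of the quotients $\lambda_{K}$ (cancelling the regular element $\prod_{\fq \in S_{K}\setminus S_{L}}P_{\fq}({\rm Fr}_{\fq}^{-1})$, which is legitimate for exactly the reason you give, namely that no prime splits completely in $k_{\infty}/k$), a verification the paper leaves implicit in its one-line appeal to Proposition~\ref{prop:key_div}.
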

\begin{proof}
By Lemma~\ref{lemma:euler-rel}, the homomorphism~\eqref{hom} is well-defined. 
Since $f_{K}$ is a regular element of $\Lambda_{K}[1/p]$ for any field $K \in \Omega$, the homomorphism~\eqref{hom} is injective. Furthermore, Proposition~\ref{prop:key_div} shows that the homomorphism~\eqref{hom} is surjective. 
\end{proof}

\begin{theorem}\label{thm:main1}
For each field $K \in \Omega$, we define an ideal $I_{K} \subset \Lambda_{K}$ by 
\[
I_{K} := \{c_{K} \mid (c_{L})_{L \in \Omega}\in \mathrm{ES}_{0}(T) \}. 
\]
Then the ideal $I_{K} \Lambda_{K}[1/p]$ is generated by $f_{K}$ and we have $I_{k} = \Lambda$. 
\end{theorem}
\begin{proof}
Proposition \ref{prop:key_div} shows that $I_{K} \Lambda_{K}[1/p] \subset f_{K}\Lambda_{K}[1/p]$. 
Let us show the opposite inclusion. 
For each prime $\fq$ of $k$, we take a lift of arithmetic Frobenius element $\widetilde{\mathrm{Fr}}_{\fq} \in \Gal(k_{\infty}\cK/k)$. 
Then we have an Euler system $(c_{K})_{K \in \Omega}$ defined by 
\[
c_{K} := \prod_{\fq \in S_{\rm ram}(K/k)}P_{\fq}(\widetilde{\mathrm{Fr}}_{\fq}^{-1}) \in \Lambda_{K}, 
\]
and hence $c_{K} \in I_{K}$. 
In particular, $I_{k} = \Lambda$ since $c_{k} = 1$.

Let $\cI_{\fq} \subset \Gal(k_{\infty}\cK/k)$ denote the inertia subgroup at $\fq$. 
Take an ideal $\fn$ of $k$ and  $\sigma_{\fq} \in \cI_{\fq}$. 
Consider the system $(c(\fn, \{\sigma_{\fq}\}_{\fq} )_{K})_{K\in\Omega}$ defined by 
\[
c(\fn, \{\sigma_{\fq}\}_{\fq} )_{K} := 
\begin{cases}
0 & \textrm{ if } \{\fq \mid \fn\} \not\subset S_{\rm ram}(K/k), 
\\
\prod_{\fq \mid \fn} \sigma_{\fq} \times  \prod_{\substack{\fq' \in S_{\rm ram}(K/k) \\ \fq' \nmid \fn }}P_{\fq'}(\widetilde{\mathrm{Fr}}_{\fq'}^{-1}) & \textrm{ if } \{\fq \mid \fn\} \subset  S_{\rm ram}(K/k). 
\end{cases}
\]
It is easy to check that $(c(\fn, \{\sigma_{\fq}\}_{\fq} )_{K})_{K\in\Omega}$ is an Euler system of rank $0$. 
Therefore, we conclude that 
\[
\prod_{\fq \in S_{\rm ram}(K/k)}(P_{\fq}(\widetilde{\mathrm{Fr}}_{\fq}^{-1})+ \Lambda[\cI_{K,\fq}]) \subset I_{K}
\]
since $\sigma_{\fq}$ is an arbitrary element of $\cI_{\fq}$. 
Take a non-trivial character $\psi \colon \Gal(K/k) \longrightarrow \overline{\bQ}^{\times}$. 
If $\psi(\cI_{K,\fq}) =1$, then 
\[
(P_{\fq}(\widetilde{\mathrm{Fr}}_{\fq}^{-1})+ \Lambda[\cI_{K,\fq}])e_{K,\psi} = P_{\fq}({\mathrm{Fr}}_{\fq}^{-1})e_{K,\psi} = f_{K,\fq}e_{K,\psi}
\] 
by definition. Moreover, if $\psi(\cI_{K,\fq}) \neq 1$, then we have 
\[
\Lambda_{K, \overline{\bQ}_{p}}(P_{\fq}(\widetilde{\mathrm{Fr}}_{\fq}^{-1})+ \Lambda[\cI_{K,\fq}])e_{K,\psi} = 
\Lambda_{K, \overline{\bQ}_{p}}e_{K,\psi}. 
\]
Since $\Lambda_{K, \overline{\bQ}_{p}}$ is a faithfully flat $\Lambda_{K}[1/p]$-algebra, 
these facts imply that 
\[
f_{K}\Lambda_{K}[1/p] \subset I_{K} \Lambda_{K}[1/p], 
\]
and hence $f_{K}\Lambda_{K}[1/p] = I_{K} \Lambda_{K}[1/p]$. 
\end{proof}

\section{Characteristic ideals}

In this section, we recall the definition of the characteristic ideal of a finitely generated module over a noetherian ring. 
 Basic properties of  the characteristic ideal  studied in \cite[Appendix~C]{hres}. 

\begin{definition}[{\cite[Definition~2.8]{hres}}]\label{def:char} 
Let $R$ be a noetherian ring and $M$ a finitely generated $R$-module. 
Take an exact sequence $0 \longrightarrow N \longrightarrow R^{s} \longrightarrow M \longrightarrow 0$ of $R$-modules with $s \geq 1$. 
We then define the characteristic ideal of $M$ by 
\[
{\rm char}_{R}(M) := {\rm im }\left({\bigcap}_{R}^{s}N \longrightarrow {\bigcap}_{R}^{s}R^{s} = R\right). 
\]
We see that the characteristic ideal ${\rm char}_{R}(M)$ does not depend on the choice of the exact sequence $0 \longrightarrow N \longrightarrow R^{s} \longrightarrow M \longrightarrow 0$  of $R$-modules (see \cite[Remark~C.5]{hres}). 
\end{definition}

\begin{remark}
Since the exterior bi-dual commutes with flat base change, for a flat homomorphism $R \longrightarrow R'$ of noetherian rings and a finitely generated $R$-module $M$, we have ${\rm char}_{R}(M)R' = {\rm char}_{R'}(M \otimes_{R} R')$. 
\end{remark}

\begin{remark}
The characteristic ideal is not additive in  short exact sequences. 
In fact, suppose that $R$ is a noetherian local ring. 
Take a regular element $r \in R$ and an ideal $I$ of $R$ with $r \in I$. 
We then have an exact sequence of $R$-modules
\[
0 \longrightarrow I/rR \longrightarrow R/rR \longrightarrow R/I \longrightarrow 0. 
\]
We note that ${\rm char}_{R}(R/rR) = rR \cong R$ since $r$ is a regular element. 
Therefore, if we have 
\[
{\rm char}_{R}(R/I){\rm char}_{R}(I/rR) = {\rm char}_{R}(R/rR) \cong R, 
\]
the ideal 
${\rm char}_{R}(R/I)$ is invertible. 
Since an invertible ideal is projective, the ideal  ${\rm char}_{R}(R/I)$ is principal. 
Hence we conclude that if ${\rm char}_{R}(R/I) = \mathrm{im}(I^{**} \longrightarrow R)$ is not principal, then we have 
\[
{\rm char}_{R}(R/rR) \neq {\rm char}_{R}(R/I){\rm char}_{R}(I/rR). 
\]
\end{remark}

\begin{lemma}[{\cite[Proposition~C.7]{hres}}]\label{lemma:fitt}
Let $R$ be a noetherian ring and $M$ a finitely generated $R$-module.  
\begin{itemize}
\item[(i)] We have ${\rm Fitt}_{R}^{0}(M) \subset {\rm char}_{R}(M)$. 
\item[(ii)] If the projective dimension of $M$ is at most 1, then we have ${\rm Fitt}_{R}^{0}(M) = {\rm char}_{R}(M)$. 
\end{itemize}
\end{lemma}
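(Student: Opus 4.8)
The plan is to realise both ideals as the image of an exterior power $\bigwedge^{s}_{R}N \longrightarrow \bigwedge^{s}_{R}R^{s}=R$ attached to a short exact sequence $0 \longrightarrow N \longrightarrow R^{s} \longrightarrow M \longrightarrow 0$. Since $R$ is noetherian and $M$ is finitely generated, I would fix a finite free presentation given by a homomorphism $\phi \colon R^{m} \longrightarrow R^{s}$ with $M \cong \coker(\phi)$, and put $N := {\rm im}(\phi) \subseteq R^{s}$, so that $0 \longrightarrow N \longrightarrow R^{s} \longrightarrow M \longrightarrow 0$ is exact and $\phi$ is the composite of a surjection $R^{m} \twoheadrightarrow N$ with the inclusion $N \hookrightarrow R^{s}$. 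The starting observation is the standard description ${\rm Fitt}_{R}^{0}(M) = {\rm im}\bigl(\bigwedge^{s}_{R}\phi \colon \bigwedge^{s}_{R}R^{m} \longrightarrow \bigwedge^{s}_{R}R^{s} = R\bigr)$: on a basis vector $e_{i_{1}} \wedge \dots \wedge e_{i_{s}}$ the map $\bigwedge^{s}_{R}\phi$ returns the determinant of the associated $s \times s$ submatrix of $\phi$, so its image is exactly the ideal of $s\times s$ minors. As $\bigwedge^{s}_{R}$ carries the surjection $R^{m} \twoheadrightarrow N$ to a surjection $\bigwedge^{s}_{R}R^{m} \twoheadrightarrow \bigwedge^{s}_{R}N$, this already gives ${\rm Fitt}_{R}^{0}(M) = {\rm im}\bigl(\bigwedge^{s}_{R}N \longrightarrow \bigwedge^{s}_{R}R^{s}=R\bigr)$.

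For (i), there is a canonical map $\bigwedge^{s}_{R}N \longrightarrow \bigcap^{s}_{R}N$, and naturality with respect to $N \hookrightarrow R^{s}$ shows that the composite $\bigwedge^{s}_{R}N \longrightarrow \bigcap^{s}_{R}N \longrightarrow \bigcap^{s}_{R}R^{s} = R$ agrees with $\bigwedge^{s}_{R}N \longrightarrow \bigwedge^{s}_{R}R^{s}=R$, using the canonical identification $\bigwedge^{s}_{R}R^{s} = \bigcap^{s}_{R}R^{s}$ for the free module $R^{s}$. Hence ${\rm Fitt}_{R}^{0}(M) = {\rm im}(\bigwedge^{s}_{R}N \longrightarrow R) \subseteq {\rm im}(\bigcap^{s}_{R}N \longrightarrow R) = {\rm char}_{R}(M)$, the last equality by independence of the chosen exact sequence (\cite[Remark~C.5]{hres}).

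For (ii), if the projective dimension of $M$ is at most $1$, then in $0 \longrightarrow N \longrightarrow R^{s} \longrightarrow M \longrightarrow 0$ the module $N$ is projective, and it is finitely generated, being a submodule of $R^{s}$ over the noetherian ring $R$. The key point is that for a finitely generated projective module $N$ the canonical map $\bigwedge^{s}_{R}N \longrightarrow \bigcap^{s}_{R}N$ is an isomorphism: exterior powers commute with localisation, the exterior bidual commutes with flat base change (hence with localisation), and after localising at a prime $N$ becomes free, where the two constructions coincide; so the map is an isomorphism locally, hence globally. Combining this with the first paragraph gives ${\rm char}_{R}(M) = {\rm im}(\bigcap^{s}_{R}N \longrightarrow R) = {\rm im}(\bigwedge^{s}_{R}N \longrightarrow R) = {\rm Fitt}_{R}^{0}(M)$, which together with (i) yields the equality.

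The verifications of the naturality of $\bigwedge^{s}_{R}N \to \bigcap^{s}_{R}N$ and of the identification $\bigwedge^{s}_{R}R^{s} = \bigcap^{s}_{R}R^{s}$ are routine, as is the base-change behaviour of exterior powers. The one step deserving real care --- and the only place the hypothesis of (ii) is used --- is the isomorphism $\bigwedge^{s}_{R}N \cong \bigcap^{s}_{R}N$ for finitely generated projective $N$; but this reduces cleanly to the free case by localisation, so I do not anticipate a genuine obstacle.
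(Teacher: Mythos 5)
Your argument is correct: realising both ideals as images of $\bigwedge^{s}_{R}N$ and $\bigcap^{s}_{R}N$ in $R$ via the natural map $\bigwedge^{s}_{R}N \longrightarrow \bigcap^{s}_{R}N$ gives (i), and the projectivity of $N$ when the projective dimension of $M$ is at most $1$ (so that this map becomes an isomorphism, checked by localisation and flat base change of the bidual) gives (ii). The paper itself offers no proof, deferring to \cite[Proposition~C.7]{hres}, and your argument is essentially the standard one given there, so there is nothing to add.
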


When $R$ is a zero dimensional Gorenstein ring, the characteristic ideal of an $R$-module coincides with its annihilator ideal. 

\begin{proposition}\label{prop:artin-char}
Let  $R$ be a zero dimensional Gorenstein ring and $M$ a finitely generated $R$-module.  
Then we have  
\[
{\rm char}_{R}(M) = \Ann_{R}(M). 
\] 
\end{proposition}
\begin{proof}
Take an integer $r > 0$ and an exact sequence of $R$-modules 
\[
0 \longrightarrow N \longrightarrow R^{r} \longrightarrow M \longrightarrow 0. 
\]
Since the functor $(-)^{*}$ is exact, we have  an exact sequence 
\[
M^{*} \otimes_{R} {\bigwedge}^{r-1}_{R}(R^{r})^{*} \longrightarrow {\bigwedge}^{r}_{R}(R^{r})^{*} 
\longrightarrow {\bigwedge}^{r}_{R}N^{*} \longrightarrow 0
\]
(see \cite[Lemma 2.5]{bss}). 
Note that $M = M^{**}$ by Matlis duality. 
By taking $R$-duals to this exact sequence, we obtain an exact sequence 
\[
0 \longrightarrow {\bigcap}^{r}_{R}N \longrightarrow R \longrightarrow 
\left(M^{*} \otimes_{R} {\bigwedge}^{r-1}_{R}(R^{r})^{*} \right)^{*} = 
M^{r}. 
\]
The homomorphism $R \longrightarrow M^{r}$ corresponds to the surjection $R^{r} \longrightarrow M$ 
under the canonical identifications $\Hom_{R}(R,M^{r}) = \bigoplus^{r}_{i=1} \Hom_{R}(R,M) = \Hom_{R}(R^{r},M)$. 
Hence we have  
\begin{align*}
\mathrm{char}_{R}(M) = \ker(R \longrightarrow M^{r}) 
= \Ann_{R}\left(\im(R \longrightarrow M^{r})\right). 
\end{align*}
Since the homomorphism $R \longrightarrow M^{r}$ corresponds to the surjection $R^{r} \longrightarrow M$, we conclude that 
$\mathrm{char}_{R}(M) =  \Ann_{R}\left(\im(R \longrightarrow M^{r})\right) =  \Ann_{R}(M)$. 
\end{proof}

\begin{definition}
Let $R$ be a noetherian ring and $n \geq 0$ an integer. 
\begin{itemize}
\item[(i)] The ring $R$ is said to satisfy the condition (G$_{n}$) if the local ring $R_{\fr}$ is Gorenstein for any  prime $\fr$ of $R$ with ${\rm ht}(\fr) \leq n$. 
\item[(ii)] We say that the ring $R$ satisfies  Serre's condition (S$_{n}$) if the inequality 
\[
{\rm depth}(R_{\fr}) \geq \min\{n, {\rm ht}(\fr)\}
\] 
holds for all prime ideal $\fr$ of $R$. 
\end{itemize}
\end{definition}

\begin{remark}
The ring $\Lambda_{K}$ satisfies (G$_{n}$) and (S$_{n}$) for any integer $n \geq 0$ since $\Lambda_{K} = \Lambda[\Gal(K/k)]$ is Gorenstein. 
\end{remark}

\begin{lemma}[{\cite[Lemma~C.1]{hres}}]\label{lemma:bidual-inj}
Let $R$ be a noetherian ring satisfying (G$_{0}$) and (S$_{1}$).  
For any integer $r \geq 0$ and any injection $M \longhookrightarrow N$ of finitely generated $R$-modules, 
the induced homomorphism ${\bigcap}^{r}_{R}M \longrightarrow {\bigcap}^{r}_{R}N$ is also injective. 
\end{lemma}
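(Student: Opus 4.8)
The plan is to reduce the statement to a concrete, element-level check about exterior bi-duals and then exploit the definition of $\bigcap^r_R M$ as a double dual. Recall that by definition $\bigcap^r_R M = (\bigwedge^r_R(M^*))^{*}$, so the induced map $\bigcap^r_R M \to \bigcap^r_R N$ is the $R$-dual of the map $\bigwedge^r_R(N^*) \to \bigwedge^r_R(M^*)$ obtained by pulling back $R$-linear functionals along $\iota\colon M \hookrightarrow N$. Hence, writing $M' := \im(\iota) \subset N$, I first observe that $M \cong M'$ (as $\iota$ is injective), so it suffices to treat the case where $M$ is literally a submodule of $N$ and $\iota$ is the inclusion. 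The goal becomes: the restriction-of-functionals map $\rho\colon N^* \to M^*$ induces, after applying $\bigwedge^r$ and then $(-)^*$, an injection on bi-duals.

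The key structural input is that under $(\mathrm{G}_0)$ and $(\mathrm{S}_1)$ the ring $R$ has no embedded primes and is Gorenstein (hence reflexive-friendly) at all minimal primes; in particular $R$ satisfies $(\mathrm{S}_1)$, so every nonzerodivisor-free situation is controlled by the total quotient ring $Q := Q(R) = \prod_{\fr \text{ minimal}} R_\fr$. The standard mechanism is: a finitely generated $R$-module of the form $P^{*}$ (an $R$-dual) is torsion-free and in fact is an $(\mathrm{S}_2)$-type "reflexive-like" module, so the natural map $P^{*} \to P^{*} \otimes_R Q$ is injective, and a homomorphism between such modules is injective if and only if it is injective after $\otimes_R Q$. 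Over $Q$, which is a finite product of Artinian local Gorenstein rings, everything is very explicit: $\bigcap^r$ behaves well under localization and $\bigcap^r_{R}(-)\otimes_R Q = \bigcap^r_Q(-\otimes_R Q)$ (the bi-dual commutes with the flat base change $R \to Q$, exactly as in the Remark after the definition of the characteristic ideal). So I am reduced to proving: for an injection $M \otimes_R Q \hookrightarrow N\otimes_R Q$ of finitely generated $Q$-modules, the map $\bigcap^r_Q(M\otimes_R Q) \to \bigcap^r_Q(N\otimes_R Q)$ is injective.

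To finish over $Q$, I would argue componentwise and thus assume $Q$ is Artinian local Gorenstein. Here $\bigcap^r_Q(-) = \bigwedge^r_Q(-)^{**}$, and since $Q$ is zero-dimensional Gorenstein it is injective as a module over itself, so Matlis-type duality $(-)^{*} = \Hom_Q(-,Q)$ is exact and $(-)^{**}$ is naturally isomorphic to the identity on finitely generated modules. Consequently $\bigcap^r_Q(L) \cong \bigwedge^r_Q(L)$ for any finitely generated $Q$-module $L$, and the map in question is identified with $\bigwedge^r_Q(\iota\otimes Q)$. Now I use that over an Artinian local Gorenstein ring (indeed over any local ring) an injection of finitely generated modules $A \hookrightarrow B$ need not stay injective after $\bigwedge^r$ in general — so this is the genuinely delicate point — but after the double-dual this is repaired: the precise statement I would invoke/prove is that $\bigwedge^r$ of $A^{**}\hookrightarrow B^{**}$ is injective, and since we have already replaced $M, N$ by their images and passed through $(-)^{**}$ the relevant modules are reflexive, so the exactness of $\Hom_Q(-,Q)$ gives that $A^* \twoheadleftarrow B^*$ is surjective, hence $\bigwedge^r(B^*) \twoheadrightarrow \bigwedge^r(A^*)$ is surjective, hence its $Q$-dual $\bigcap^r_Q A \hookrightarrow \bigcap^r_Q B$ is injective. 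This last implication — that the $Q$-dual of a surjection is an injection — is immediate from left-exactness of $\Hom_Q(-,Q)$, and the surjectivity of $B^* \to A^*$ is exactly the Gorenstein/injectivity input.

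The main obstacle is making the descent from $Q$ back to $R$ rigorous: one must know that $\bigcap^r_R M$ is genuinely a \emph{torsion-free} $R$-module (so that injectivity can be tested after $\otimes_R Q$), and that $\bigcap^r_R(-)$ commutes with the localization $R \to Q$. The torsion-freeness is where $(\mathrm{G}_0)$ and $(\mathrm{S}_1)$ are used: $(\mathrm{S}_1)$ guarantees $R$ has no embedded associated primes so $Q$ is a product of fields or Artinian Gorenstein local rings picked out by the minimal primes, and $(\mathrm{G}_0)$ guarantees those local rings are Gorenstein, hence self-injective, which is precisely what powers the final duality argument; moreover any $R$-dual $P^{*}=\Hom_R(P,R)$ is a second syzygy-type module and injects into $P^{*}\otimes_R Q$ under these hypotheses. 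Once these two facts are in hand the proof is the three-step chain above: (1) replace $M$ by its image in $N$ and rewrite the map as the $R$-dual of $\bigwedge^r(N^*)\to\bigwedge^r(M^*)$; (2) reduce to injectivity after $\otimes_R Q$ using torsion-freeness of bi-duals and flat base change; (3) over each Artinian Gorenstein local factor of $Q$, deduce surjectivity of $N^*\otimes Q \to M^*\otimes Q$ from exactness of $\Hom(-,Q)$, take $\bigwedge^r$, and dualize back to get the desired injection.
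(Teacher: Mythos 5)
Your argument is correct and is essentially the same as the proof of the cited result \cite[Lemma~C.1]{hres} (the paper itself only quotes it): use that $\bigcap^{r}_{R}M=\bigl(\bigwedge^{r}_{R}(M^{*})\bigr)^{*}$ is torsion-free, base change along the flat map to the total quotient ring $Q$, which by (G$_{0}$) and (S$_{1}$) is a finite product of zero-dimensional Gorenstein local rings, and there exploit self-injectivity: $\Hom_{Q}(-,Q)$ is exact, so the injection $M\otimes_{R}Q\hookrightarrow N\otimes_{R}Q$ dualizes to a surjection, $\bigwedge^{r}$ preserves surjections, and dualizing once more gives the injection of bi-duals, which descends to $R$ by torsion-freeness. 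One caveat: the two intermediate assertions that $\bigcap^{r}_{Q}(L)\cong\bigwedge^{r}_{Q}(L)$ and that $\bigwedge^{r}$ applied to $A^{**}\hookrightarrow B^{**}$ stays injective should be deleted --- both fail in general even over an Artinian Gorenstein local ring (over $Q=k[x]/(x^{2})$ and $L=k\oplus k$ the natural map $\bigwedge^{2}_{Q}(L^{*})\to(\bigwedge^{2}_{Q}L)^{*}$ is zero although both sides are nonzero, and since every finite $Q$-module is reflexive the second assertion would say $\bigwedge^{r}$ preserves all injections, which you yourself note is false) --- but your closing chain (exactness of $\Hom_{Q}(-,Q)$, then $\bigwedge^{r}$ of a surjection, then one more dual) never uses them, so the proof is complete without these remarks.
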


Recall that $(-)^{*} := \Hom_{R}(-,R)$ denotes the $R$-dual functor. 
If a noetherian ring  $R$ satisfies  (G$_{0}$) and (S$_{1}$),  the canonical homomorphism $I^{**} \longrightarrow R^{**} = R$ is injective for any ideal $I$ of $R$ by Lemma~\ref{lemma:bidual-inj}. 
Hence by identifying $I^{**}$ with the image of this injection, we regard $I^{**}$ as an ideal of $R$ when $R$ satisfies (G$_{0}$) and (S$_{1}$). 

\begin{lemma}[{\cite[Proposition~C.10]{hres}}]\label{lemma:ref}
Let $R$ be a noetherian ring satisfying (G$_{0}$) and (S$_{1}$). Let $M$ be a finitely generated $R$-module. 
Then the ideal ${\rm char}_{R}(M)$ is reflexive, that is, ${\rm char}_{R}(M) = {\rm char}_{R}(M)^{**}$. 
\end{lemma}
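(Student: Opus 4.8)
The plan is to reduce the claimed reflexivity of $\mathrm{char}_{R}(M)$ to the already-established injectivity lemma for exterior bi-duals (Lemma~\ref{lemma:bidual-inj}) together with a standard reflexivity property of bi-duals over a ring satisfying (G$_{0}$) and (S$_{1}$). Fix an exact sequence $0 \longrightarrow N \longrightarrow R^{s} \longrightarrow M \longrightarrow 0$ with $s \geq 1$ that is used to compute $\mathrm{char}_{R}(M)$. By definition $\mathrm{char}_{R}(M)$ is the image of the map $\bigcap^{s}_{R}N \longrightarrow \bigcap^{s}_{R}R^{s} = R$, i.e. the image of $\bigl(\bigwedge^{s}_{R}N^{*}\bigr)^{*} \longrightarrow \bigl(\bigwedge^{s}_{R}(R^{s})^{*}\bigr)^{*} = R$.

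First I would record the general fact that, over a noetherian ring $R$ satisfying (G$_{0}$) and (S$_{1}$), the module $L^{*} = \Hom_{R}(L,R)$ is reflexive for every finitely generated $R$-module $L$; equivalently the bi-dual functor $L \mapsto L^{**}$ lands in reflexive modules and $L^{***} = L^{*}$ canonically. This is the algebraic heart of the matter and is the kind of statement proved in \cite[Appendix~C]{hres} (it is exactly what makes $\bigcap^{r}_{R}$ well-behaved). Applying this with $L = \bigwedge^{s}_{R}N^{*}$ shows that $\bigcap^{s}_{R}N = L^{*}$ is reflexive as an abstract $R$-module. The remaining point is to identify the \emph{ideal} $\mathrm{char}_{R}(M)$, i.e. the image $J := \im\bigl(\bigcap^{s}_{R}N \to R\bigr)$, with its own bi-dual $J^{**}$ inside $R$.

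The key step is then the following: the map $\iota \colon \bigcap^{s}_{R}N \to R$ is injective. Indeed $N \hooklongrightarrow R^{s}$ is an injection of finitely generated modules, so by Lemma~\ref{lemma:bidual-inj} (with $r = s$) the induced map $\bigcap^{s}_{R}N \to \bigcap^{s}_{R}R^{s} = R$ is injective. Hence $\bigcap^{s}_{R}N \stackrel{\sim}{\longrightarrow} J$ as $R$-modules, so $J$ itself is reflexive, being isomorphic to the reflexive module $\bigcap^{s}_{R}N$. It remains only to check that the \emph{abstract} reflexivity of the submodule $J \subset R$ coincides with the \emph{ideal-theoretic} bi-dual $J^{**}$ regarded inside $R$ via the injection $J^{**} \to R^{**} = R$ (which is legitimate by the paragraph preceding the statement, since $R$ satisfies (G$_{0}$) and (S$_{1}$)). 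For this one observes that the inclusion $J \hookrightarrow R$ induces a commutative square relating $J \to J^{**}$ and $J \to R$; since $J \to J^{**}$ is an isomorphism (abstract reflexivity) and $J^{**} \to R$ is injective, the image of $J^{**}$ in $R$ equals the image of $J$, namely $J$ itself. Therefore $\mathrm{char}_{R}(M) = J = J^{**} = \mathrm{char}_{R}(M)^{**}$.

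The main obstacle I anticipate is purely bookkeeping rather than conceptual: one must be careful that the identification of $J^{**}$ with a genuine ideal of $R$ is compatible with all the canonical maps, so that ``$J$ is reflexive as a module'' really does translate into ``$J = J^{**}$ as ideals''. A secondary point is to make sure the general reflexivity statement $L^{*} = L^{***}$ over (G$_{0}$)+(S$_{1}$) rings is available in the form needed; if it is not isolated as a lemma earlier, one would invoke the standard argument that over such rings duals satisfy Serre's condition (S$_{2}$) and are therefore reflexive, which is exactly the framework of \cite[Appendix~C]{hres}. Apart from that, everything is a formal consequence of Lemma~\ref{lemma:bidual-inj} and the definition of $\mathrm{char}_{R}$.
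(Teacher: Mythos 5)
Your argument is correct, and since the paper gives no proof of this lemma (it is quoted verbatim as Proposition~C.12 of \cite{hres}), the comparison is with that appendix rather than with anything in the present text; your reconstruction is the natural one and matches what the cited result rests on. The reduction is exactly right: Lemma~\ref{lemma:bidual-inj} applied to $N \hooklongrightarrow R^{s}$ makes ${\bigcap}^{s}_{R}N \longrightarrow R$ injective, so $J := {\rm char}_{R}(M)$ is isomorphic as a module to the dual $\bigl({\bigwedge}^{s}_{R}N^{*}\bigr)^{*}$, and the naturality relation $i^{**}\circ \epsilon_{J} = \epsilon_{R}\circ i$ for the inclusion $i \colon J \to R$, combined with the injectivity of $J^{**} \to R^{**}=R$, converts module-theoretic reflexivity of $J$ into the ideal-theoretic equality $J = J^{**}$ in the sense fixed before the statement.

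The only point needing care is the auxiliary fact that $L^{*}$ is reflexive for every finitely generated $L$ over a noetherian ring satisfying (G$_{0}$) and (S$_{1}$). The fact is true and is indeed part of the framework of \cite[Appendix~C]{hres}, so quoting it is legitimate; but your fallback justification (``duals satisfy (S$_{2}$) and are therefore reflexive'') is the standard argument over rings satisfying (G$_{1}$) and (S$_{2}$) and does not apply verbatim under the weaker hypotheses here. The correct argument under (G$_{0}$)$+$(S$_{1}$) runs as follows: the biduality map $L^{*} \to L^{***}$ is split injective for purely formal reasons, so its cokernel $C$ is a direct summand of $L^{***}=(L^{**})^{*}$ and hence embeds into a finite free module (dualize a presentation $R^{m} \twoheadrightarrow L^{**}$); at every minimal prime $\fp$ the ring $R_{\fp}$ is a zero-dimensional Gorenstein, hence self-injective, local ring, over which biduality is an isomorphism, so $C_{\fp}=0$; and (S$_{1}$) forces every associated prime of a submodule of a free module to be minimal, so $C\neq 0$ would produce a minimal prime in its support, a contradiction. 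With that statement either quoted from \cite{hres} or proved as above, your proof is complete.
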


\begin{lemma}[{\cite[Lemma~C.11]{hres}}]\label{lemma:bidualeq}
Let $R$ be a noetherian ring satisfying (G$_{0}$) and (S$_{2}$). 
Let $I$ and $J$ be ideals of $R$. If $IR_{\fr} \subset JR_{\fr}$ for any prime ideal $\fr$ of $R$ with ${\rm ht}(\fr) \leq 1$, then  $I^{**} \subset J^{**}$. 
\end{lemma}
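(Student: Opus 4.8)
The plan is to deduce Lemma~\ref{lemma:bidualeq} from the reflexivity properties of ideals together with the local characterization of the bidual. First I would note that, since $R$ satisfies (G$_0$) and (S$_1$), Lemma~\ref{lemma:bidual-inj} applies, so both $I^{**}$ and $J^{**}$ are canonically realized as ideals of $R$ (as explained after that lemma), and the inclusion $I \subset I^{**}$, $J \subset J^{**}$ holds. Thus it suffices to prove $I^{**} \subset J^{**}$ under the hypothesis that $I R_\fr \subset J R_\fr$ for all primes $\fr$ of height at most $1$.

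The key step is a local-to-global principle for the bidual. For any finitely generated $R$-module $M$, the formation of $M^* = \Hom_R(M,R)$ commutes with localization (since $M$ is finitely presented), so $(M^{**})_\fr \cong (M_\fr)^{**}$ over $R_\fr$ for every prime $\fr$. Applying this with $M = I$ and $M = J$, the hypothesis gives $(I^{**})_\fr = (I_\fr)^{**} \subset (J_\fr)^{**} = (J^{**})_\fr$ for every $\fr$ with $\operatorname{ht}(\fr) \leq 1$; indeed over the local ring $R_\fr$ the inclusion $I R_\fr \subset J R_\fr$ induces $(I R_\fr)^{**} \hookrightarrow (J R_\fr)^{**}$ by Lemma~\ref{lemma:bidual-inj} (functoriality of $(-)^{**}$ on inclusions). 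So the inclusion $I^{**} \subset J^{**}$ holds after localizing at every prime of height $\leq 1$.

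To upgrade this to a global inclusion, I would use that a reflexive ideal — or more precisely, the bidual $J^{**}$ of any ideal — is an (S$_2$) module, so it is determined by its localizations at primes of height $\leq 1$: for a finitely generated reflexive module $N$ over a ring satisfying (G$_0$) and (S$_2$), one has $N = \bigcap_{\operatorname{ht}(\fr) \leq 1} N_\fr$ inside $N \otimes_R \operatorname{Frac}$ (or more carefully, inside the total ring of fractions, using that $N \hookrightarrow N^{**}$ and $N^{**}$ has depth $\geq 2$ at primes of height $\geq 2$). Concretely, consider the quotient $Q := J^{**}/(I^{**} \cap J^{**})$; it is supported in codimension $\geq 2$ by the previous paragraph, while $I^{**}$, being a bidual, satisfies (S$_2$), hence has no associated primes of height $\geq 2$; chasing the exact sequence $0 \to I^{**} \cap J^{**} \to I^{**} \to I^{**}/(I^{**}\cap J^{**}) \to 0$ and using that $I^{**}/(I^{**}\cap J^{**}) \hookrightarrow R/J^{**}$-type arguments forces $I^{**} = I^{**} \cap J^{**}$, i.e. $I^{**} \subset J^{**}$.

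I expect the main obstacle to be the last step: making precise the statement that the bidual of an ideal over a (G$_0$)+(S$_2$) ring is an (S$_2$)-module and hence satisfies the requisite codimension-purity, and then correctly organizing the codimension count so that a containment that holds only in codimension $\leq 1$ propagates to a global containment. The cleanest route is probably to invoke the standard fact (e.g. from commutative algebra of reflexive modules, as in Bourbaki or the references in \cite[Appendix~C]{hres}) that for such $R$ a finitely generated module $N$ is reflexive if and only if it satisfies (S$_2$) relative to $R$ as a second-syzygy-type condition, together with the observation that $\operatorname{Hom}_R(-,R)$ always lands in (S$_2$)-modules over a (G$_0$)+(S$_2$) ring; everything else is then the routine localization and diagram-chase sketched above.
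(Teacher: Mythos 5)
Your overall strategy is the right one, and it is essentially the argument behind \cite[Lemma~C.13]{hres} (which the present paper only cites, without reproducing a proof): since $(-)^{*}$, and hence $(-)^{**}$, commutes with localization for finitely generated modules over a noetherian ring, the hypothesis gives $(I^{**})_{\fr} \subset (J^{**})_{\fr}$ inside $R_{\fr}$ for every prime $\fr$ with ${\rm ht}(\fr) \leq 1$ (note $R_\fr$ again satisfies (G$_0$) and (S$_1$), so Lemma~\ref{lemma:bidual-inj} lets you view these biduals as ideals of $R_\fr$), and one then upgrades this codimension-$\leq 1$ containment to a global one by a depth argument using (S$_2$). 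Up to that point your write-up is fine.

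The gap is in the closing step, which as written does not work. First, the module you must kill is $I^{**}/(I^{**}\cap J^{**})$, equivalently the image of $I^{**}$ in $R/J^{**}$ --- not $Q=J^{**}/(I^{**}\cap J^{**})$; it is the former, not the latter, that your localization step shows to be supported in codimension $\geq 2$. Second, the fact you invoke to finish, namely that $I^{**}$ ``has no associated primes of height $\geq 2$'', concerns the wrong module: the associated primes of a submodule of $R/J^{**}$ lie in ${\rm Ass}(R/J^{**})$, not in ${\rm Ass}(I^{**})$, so (S$_2$)-ness of $I^{**}$ by itself yields nothing here. What actually closes the argument is the statement ${\rm Ass}_{R}(R/J^{**}) \subset \{\fr : {\rm ht}(\fr) \leq 1\}$, and this is where (S$_2$) and the dual-module structure of $J^{**}$ enter: for ${\rm ht}(\fr) \geq 2$ one has ${\rm depth}(R_{\fr}) \geq 2$ by (S$_2$); hence ${\rm depth}_{R_{\fr}}\bigl((J^{**})_{\fr}\bigr) \geq 2$ whenever it is nonzero, because $(J^{**})_{\fr} = \Hom_{R_{\fr}}\bigl((J_{\fr})^{*}, R_{\fr}\bigr)$ and any $R_{\fr}$-regular sequence of length two is regular on such a $\Hom$-module; the depth lemma applied to $0 \to (J^{**})_{\fr} \to R_{\fr} \to (R/J^{**})_{\fr} \to 0$ then gives ${\rm depth}\bigl((R/J^{**})_{\fr}\bigr) \geq 1$, i.e.\ $\fr \notin {\rm Ass}(R/J^{**})$ (the degenerate cases $(J^{**})_{\fr}=0$ or $=R_{\fr}$ are trivial). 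With this in hand the lemma follows at once: for $x \in I^{**} \subset R$, the cyclic submodule of $R/J^{**}$ generated by the image of $x$ vanishes at every height $\leq 1$ prime, so a nonzero such submodule would have an associated prime of height $\geq 2$ lying in ${\rm Ass}(R/J^{**})$, a contradiction; hence $x \in J^{**}$. Since you yourself flagged this last step as the main obstacle, be aware that it is the real content of the lemma, and that the purity statement must be applied to $R/J^{**}$ (via the depth of the dual $J^{**}$), not to $I^{**}$.
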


\begin{corollary}[{\cite[Remark~C.12~(i)]{hres}}]\label{cor:pseudo}
Let $R$ be a noetherian ring satisfying (G$_{0}$) and (S$_{2}$). 
 If a finitely generated $R$-module $M$ is pseudo-isomorphic to an $R$-module $N$, then we have ${\rm char}_{R}(M) = {\rm char}_{R}(N)$. 
\end{corollary}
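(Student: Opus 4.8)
The plan is to reduce the statement to Lemma~\ref{lemma:bidualeq} together with the standard comparison between characteristic ideals and Fitting ideals in codimension one. First I would recall that if $M$ is pseudo-isomorphic to $N$, then there is a homomorphism $f\colon M \longrightarrow N$ whose kernel and cokernel are pseudo-null, i.e.\ supported in height $\geq 2$. The key point is therefore to analyse the behaviour of ${\rm char}_{R}$ after localisation at a prime $\fr$ with ${\rm ht}(\fr) \leq 1$: for such $\fr$ the localised map $f_{\fr}\colon M_{\fr} \longrightarrow N_{\fr}$ is an isomorphism, since its kernel and cokernel both vanish after localisation at a prime of height $\leq 1$.

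Next I would use the explicit description of the characteristic ideal via a presentation $0 \longrightarrow K \longrightarrow R^{s} \longrightarrow M \longrightarrow 0$. Since $\fit^{0}_{R}(M) \subseteq {\rm char}_{R}(M)$ by Lemma~\ref{lemma:fitt}(i) and equality of Fitting ideals is a local-friendly and pseudo-isomorphism-friendly statement in codimension one, I expect that for each height-$\leq 1$ prime $\fr$ one gets ${\rm char}_{R}(M)R_{\fr} = {\rm char}_{R_{\fr}}(M_{\fr})$ (characteristic ideals commute with localisation, as for the flat base change remark preceding Lemma~\ref{lemma:fitt}), and similarly for $N$; then using that $M_{\fr} \cong N_{\fr}$ one concludes ${\rm char}_{R}(M)R_{\fr} = {\rm char}_{R}(N)R_{\fr}$ for all such $\fr$. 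Applying Lemma~\ref{lemma:bidualeq} to $I := {\rm char}_{R}(M)$ and $J := {\rm char}_{R}(N)$ (and vice versa) gives ${\rm char}_{R}(M)^{**} = {\rm char}_{R}(N)^{**}$, and then Lemma~\ref{lemma:ref} — which says characteristic ideals are reflexive — upgrades this to the desired equality ${\rm char}_{R}(M) = {\rm char}_{R}(N)$.

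Concretely, the steps in order are: (1) unwind the definition of pseudo-isomorphism and reduce to showing ${\rm char}_{R}(M)R_{\fr} = {\rm char}_{R}(N)R_{\fr}$ for every prime $\fr$ with ${\rm ht}(\fr)\leq 1$; (2) prove that ${\rm char}$ commutes with localisation at such $\fr$ (this follows from the bi-dual computation, since localisation is flat and the bi-dual commutes with flat base change); (3) observe $M_{\fr} \cong N_{\fr}$ because the pseudo-null kernel and cokernel die after localising at a prime of height $\leq 1$; (4) invoke Lemma~\ref{lemma:bidualeq} to get the inclusions ${\rm char}_{R}(M)^{**} \subseteq {\rm char}_{R}(N)^{**}$ and the reverse one; (5) conclude by reflexivity (Lemma~\ref{lemma:ref}).

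The main obstacle I anticipate is step~(2): one must be a little careful that the formation of ${\rm char}_{R}(M)$, defined through a global presentation $0 \to K \to R^{s} \to M \to 0$, is genuinely compatible with localisation — in particular that ${\bigcap}^{s}_{R}K$ localises to ${\bigcap}^{s}_{R_{\fr}}K_{\fr}$ and that the induced map into $R_{\fr} = {\bigcap}^{s}_{R_{\fr}}R_{\fr}^{s}$ has image ${\rm char}_{R}(M)R_{\fr}$. This uses that the bi-dual $({\bigwedge}^{s}_{R}(K^{*}))^{*}$ commutes with the flat map $R \to R_{\fr}$, exactly the remark already recorded in the excerpt after the definition of ${\rm char}$; once that compatibility is in hand the rest is formal. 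One should also check that $R_{\fr}$ still satisfies (G$_{0}$) and (S$_{2}$) — or at least the weaker hypotheses needed — but since $\fr$ has height $\leq 1$ and $R$ satisfies (G$_{0}$), (S$_{2}$), the localisation $R_{\fr}$ has dimension $\leq 1$ and is Gorenstein, hence Cohen--Macaulay, so these conditions hold automatically.
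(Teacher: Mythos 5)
Your argument is correct and is essentially the paper's own proof: reflexivity of characteristic ideals (Lemma~\ref{lemma:ref}) together with Lemma~\ref{lemma:bidualeq} (and the fact that ${\rm char}$ commutes with the flat map $R \to R_{\fr}$) reduces everything to primes $\fr$ with ${\rm ht}(\fr)\leq 1$, i.e.\ to the case $\dim R \leq 1$, where a pseudo-isomorphism localizes to an isomorphism and the characteristic ideals agree. The only slip is the harmless aside that $R_{\fr}$ is Gorenstein for ${\rm ht}(\fr)=1$ — (G$_{0}$) does not give this — but nothing in the argument uses it, since ${\rm char}_{R_{\fr}}(M_{\fr})={\rm char}_{R_{\fr}}(N_{\fr})$ needs only $M_{\fr}\cong N_{\fr}$.
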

\begin{proof}
The characteristic ideals of $M$ and $N$ are reflexive by Lemma~\ref{lemma:ref}. 
Hence by Lemma~\ref{lemma:bidualeq},  we may assume that the Krull dimension of $R$ is at most $1$.  
In this case, any pseudo-null module is zero, and hence $M$ is isomorphic to $N$, which implies  ${\rm char}_{R}(M) = {\rm char}_{R}(N)$.  
\end{proof}

\begin{corollary}[{\cite[Remark~C.12~(ii)]{hres}}]\label{cor:char-fitt}
Let $R$ be a normal ring and $M$ a finitely generated $R$-module. 
Then we have 
\[
{\rm char}_{R}(M) = {\rm Fitt}_{R}^{0}(M)^{**}. 
\]
Hence, in this case, the notion of the characteristic ideal coincides with the usual one. 
\end{corollary}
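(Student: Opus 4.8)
The plan is to reduce everything to the local situation and invoke what we already know, since a normal ring is, by definition, a noetherian ring all of whose localizations at prime ideals are integrally closed domains; in particular a normal ring automatically satisfies Serre's conditions (S$_2$) and (R$_1$), and (R$_1$) together with the fact that a regular local ring is Gorenstein gives (G$_1$), hence in particular (G$_0$) and (S$_2$). So Lemmas~\ref{lemma:ref}, \ref{lemma:bidualeq}, and Corollary~\ref{cor:pseudo} are all available, as is Lemma~\ref{lemma:fitt}.

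\emph{Step 1.} First I would record the two inclusions coming from Lemma~\ref{lemma:fitt}~(i) and Lemma~\ref{lemma:ref}: we have ${\rm Fitt}_{R}^{0}(M) \subset {\rm char}_{R}(M)$, and ${\rm char}_{R}(M)$ is reflexive, so ${\rm char}_{R}(M) = {\rm char}_{R}(M)^{**} \supset {\rm Fitt}_{R}^{0}(M)^{**}$ by the monotonicity of $(-)^{**}$ on ideals (which itself follows from Lemma~\ref{lemma:bidualeq} applied with the trivial hypothesis on height-$\leq 1$ primes, or directly). This gives the inclusion ``$\supset$'' in the desired equality.

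\emph{Step 2.} For the reverse inclusion ${\rm char}_{R}(M) \subset {\rm Fitt}_{R}^{0}(M)^{**}$, I would use Lemma~\ref{lemma:bidualeq}: since both sides are reflexive (the left by Lemma~\ref{lemma:ref}, the right tautologically), it suffices to check that ${\rm char}_{R}(M) R_{\fr} \subset {\rm Fitt}_{R}^{0}(M) R_{\fr}$ for every prime $\fr$ with ${\rm ht}(\fr) \leq 1$. Now localize: $R_{\fr}$ is either a field (height $0$) or, by (R$_1$), a discrete valuation ring (height $1$). In either case $R_{\fr}$ is a regular, hence Gorenstein, local ring of Krull dimension at most $1$, so every finitely generated $R_{\fr}$-module has projective dimension at most $1$ (over a DVR a finitely generated module is a direct sum of a free module and cyclic torsion modules). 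Applying Lemma~\ref{lemma:fitt}~(ii) over $R_{\fr}$ to $M_{\fr} = M \otimes_R R_{\fr}$ — together with the compatibility of both ${\rm char}$ and ${\rm Fitt}^{0}$ with the flat base change $R \to R_{\fr}$ (the former noted in the Remark after the definition of ${\rm char}$, the latter standard) — gives ${\rm char}_{R}(M) R_{\fr} = {\rm char}_{R_{\fr}}(M_{\fr}) = {\rm Fitt}_{R_{\fr}}^{0}(M_{\fr}) = {\rm Fitt}_{R}^{0}(M) R_{\fr}$. Feeding this back into Lemma~\ref{lemma:bidualeq} yields ${\rm char}_{R}(M) \subset {\rm Fitt}_{R}^{0}(M)^{**}$, and combined with Step 1 we get the asserted equality. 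The final sentence of the corollary is then immediate, since over a normal ring the usual characteristic ideal is precisely the reflexive hull (equivalently, the divisorial closure) of the zeroth Fitting ideal.

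The only mild subtlety — not really an obstacle — is making sure that the flat base change statements for $\bigcap^{r}$ and hence for ${\rm char}$ are legitimately applicable to the localization maps $R \to R_{\fr}$ (they are, these being flat), and that the height-$\leq 1$ localizations of a normal ring are genuinely fields or DVRs, which is exactly the content of Serre's criterion (R$_1$)+(S$_2$) that defines normality in the noetherian setting. I expect no serious difficulty; the proof is entirely a matter of assembling Lemmas~\ref{lemma:fitt}, \ref{lemma:ref}, and \ref{lemma:bidualeq} in the right order.
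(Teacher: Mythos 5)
Your proposal is correct and follows essentially the same route as the paper: reduce to localizations at primes of height at most $1$ using the reflexivity of ${\rm char}_{R}(M)$ (Lemma~\ref{lemma:ref}) together with Lemma~\ref{lemma:bidualeq}, and then invoke regularity in dimension at most $1$ so that Lemma~\ref{lemma:fitt}~(ii) gives ${\rm Fitt}^{0}_{R_{\fr}}(M_{\fr}) = {\rm char}_{R_{\fr}}(M_{\fr})$. The only cosmetic difference is that you handle the inclusion ${\rm Fitt}_{R}^{0}(M)^{**} \subset {\rm char}_{R}(M)$ globally via Lemma~\ref{lemma:fitt}~(i), whereas the paper obtains both inclusions at once from the local equality.
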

\begin{proof}
Lemma~\ref{lemma:ref} shows that ${\rm char}_{R}(M)$ is reflexive. Hence by Lemma~\ref{lemma:bidualeq},  we may assume that the Krull dimension of $R$ is at most $1$.  
Then $R$ is a discrete valuation ring, and the projective dimension of $M$ is at most $1$. 
Lemma~\ref{lemma:fitt}~(ii) shows that ${\rm Fitt}_{R}^{0}(M) = {\rm char}_{R}(M)$. 
\end{proof}

\begin{lemma}\label{lemma:torsion}
Let $R$ be a noetherian ring satisfying (G$_{0}$) and (S$_{2}$) and $M$ a finitely generated torsion $R$-module. 
Let $r \in R$ be a regular element. If $r$ is $M$-regular, then we have 
\[
{\rm char}_{R}(M) \cap rR = r \cdot {\rm char}_{R}(M). 
\]  
Furthermore, for any prime ideal $\fr$ of $R$ with ${\rm ht}(\fr) \leq 1$ and $r \in \fr$, the module $M \otimes_{R} R_{\fr}$ vanishes. 
\end{lemma}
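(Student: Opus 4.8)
The plan is to prove the two assertions essentially independently, treating the "furthermore" clause first since it is cleaner and feeds into intuition for the main identity. For the last claim: if $\fr$ is a prime of $R$ with ${\rm ht}(\fr) \le 1$ and $r \in \fr$, then $r$ maps into the maximal ideal $\fr R_\fr$; since $r$ is $M$-regular, multiplication by $r$ is injective on $M \otimes_R R_\fr$. But $M$ is torsion over $R$, so every element of $M$ is killed by some regular element of $R$; localizing at $\fr$, we find that $M_\fr$ is a torsion module over the one-dimensional (or zero-dimensional) local ring $R_\fr$. The key point is that a finitely generated module over a Noetherian local ring on which a parameter in the maximal ideal acts injectively, yet which is torsion, must vanish: if $M_\fr \ne 0$ then by Nakayama $M_\fr \ne r M_\fr$, but injectivity of $r$ combined with finite length considerations (when ${\rm ht}(\fr)=1$, localize further or use that $R_\fr$ has dimension $\le 1$ and $M_\fr$ is supported only on the maximal ideal) forces $M_\fr/rM_\fr$ to have the same length as $M_\fr$, a contradiction unless $M_\fr = 0$. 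I would phrase this via: $\mathrm{Ass}_R(M)$ consists of primes of height $\ge 1$ (since $M$ is torsion and $R$ satisfies $(S_1)$, so the regular elements of $R$ are exactly those avoiding all height-one primes... more carefully, $\mathrm{Ass}(M)$ contains no height-$0$ prime as $M$ is torsion); on the other hand $r$ being $M$-regular means $r$ avoids every prime in $\mathrm{Ass}(M)$, so $r \notin \fr$ for any $\fr \in \mathrm{Ass}(M)$ with ${\rm ht}(\fr) \le 1$; hence $\fr \notin \mathrm{Supp}(M)$ for our given $\fr$, which gives $M_\fr = 0$.

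For the main identity ${\rm char}_R(M) \cap rR = r \cdot {\rm char}_R(M)$: the inclusion $\supseteq$ is trivial. For $\subseteq$, both sides are ideals of $R$, and since $R$ satisfies $(G_0)$ and $(S_2)$, I would like to check the inclusion after taking $(-)^{**}$, using Lemma~\ref{lemma:bidualeq}: it suffices to verify $({\rm char}_R(M) \cap rR) R_\fr \subseteq (r \cdot {\rm char}_R(M)) R_\fr$ for all primes $\fr$ with ${\rm ht}(\fr) \le 1$, provided I first argue both sides are already reflexive or at least that replacing them by their double duals is harmless. Here ${\rm char}_R(M)$ is reflexive by Lemma~\ref{lemma:ref}; one checks $r \cdot {\rm char}_R(M)$ and ${\rm char}_R(M) \cap rR$ have the expected double duals. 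Now localize at a height-$\le 1$ prime $\fr$. If $r \in \fr$, then by the "furthermore" part $M_\fr = 0$, so ${\rm char}_{R_\fr}(M_\fr) = R_\fr$, and the identity $R_\fr \cap rR_\fr = rR_\fr$ holds trivially. If $r \notin \fr$, then $r$ is a unit in $R_\fr$, and again both sides localize to ${\rm char}_{R_\fr}(M_\fr)$, so the identity is automatic. Since the identity holds after localization at every height-$\le 1$ prime and both sides are reflexive (height-$\le 1$ determined), we conclude equality globally.

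The main obstacle I anticipate is the bookkeeping around reflexivity: to invoke Lemma~\ref{lemma:bidualeq} one needs both ${\rm char}_R(M) \cap rR$ and $r\cdot{\rm char}_R(M)$ to be recovered from their localizations at height-$\le 1$ primes, i.e. to behave like reflexive ideals. For $r \cdot {\rm char}_R(M)$: since $r$ is a regular element, multiplication by $r$ is an isomorphism onto a submodule, and ${\rm char}_R(M)$ is reflexive, so $r \cdot {\rm char}_R(M) \cong {\rm char}_R(M)$ is reflexive; and $r \cdot {\rm char}_R(M) = rR \cap r{\rm char}_R(M)^{**}$ can be compared to $({\rm char}_R(M) \cap rR)$ directly. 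The cleanest route may actually be to avoid double-dualizing the intersection and instead argue: given $x \in {\rm char}_R(M) \cap rR$, write $x = ry$ with $y \in R$ (possible since $r$ regular), and show $y \in {\rm char}_R(M)$ by checking $y \in {\rm char}_R(M) R_\fr = {\rm char}_{R_\fr}(M_\fr)$ at all height-$\le 1$ primes $\fr$ — which holds because either $M_\fr = 0$ (when $r \in \fr$) forcing nothing, but when $r \notin \fr$ we get $ry \in {\rm char}_R(M)R_\fr$ with $r$ a unit, so $y \in {\rm char}_R(M)R_\fr$ — and then using reflexivity of ${\rm char}_R(M)$ (Lemma~\ref{lemma:ref}) together with Lemma~\ref{lemma:bidualeq} to lift "$y$ lies in ${\rm char}_R(M)$ at all height-$\le 1$ primes" to "$y \in {\rm char}_R(M)$". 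This sidesteps the need to double-dualize the intersection. The subtle case $r \in \fr$ is exactly where the "furthermore" clause is used.
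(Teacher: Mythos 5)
Your proof is correct, and for the main identity it follows the same skeleton as the paper: write $x=ry$, use reflexivity of ${\rm char}_{R}(M)$ (Lemma~\ref{lemma:ref}) and Lemma~\ref{lemma:bidualeq} to reduce the containment $y\in{\rm char}_{R}(M)$ to localizations at primes $\fr$ with ${\rm ht}(\fr)\leq 1$, and split into the cases $r\notin\fr$ (trivial, $r$ a unit) and $r\in\fr$ (where the ``furthermore'' vanishing is the whole point). Where you genuinely diverge is in the proof of that vanishing $M\otimes_{R}R_{\fr}=0$. The paper localizes, notes that by (S$_2$) the ring $R_{\fr}$ is Cohen--Macaulay of dimension at most $1$, and runs a grade/dimension count: with $I=\Ann_{R}(M)$, the $M$-regularity of $r$ makes $r$ regular on $R/I$ via the embedding $R/I\hooklongrightarrow\End_{R}(M)$, so $\dim(R/I)\geq 1$, while torsionness gives ${\rm ht}(I)={\rm grade}(I)\geq 1$, forcing $\dim(R_{\fr})\geq 2$, a contradiction. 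You instead argue through associated primes: since $M$ is torsion, every prime in $\mathrm{Supp}(M)$ contains a regular element and so has height at least $1$; hence a height-$\leq 1$ prime in $\mathrm{Supp}(M)$ is a minimal support prime, i.e.\ lies in $\mathrm{Ass}_R(M)$, and such a prime cannot contain the $M$-regular element $r$. (You should make this intermediate step --- that a height-$\leq1$ support prime of a torsion module is necessarily associated --- explicit, since your write-up jumps over it, and you should discard the earlier hedged Nakayama/length phrasing in favour of this argument, which is the one that works.) Your route is more elementary and slightly more general: it uses no Cohen--Macaulay or Serre condition for the vanishing statement itself, the (G$_0$) and (S$_2$) hypotheses entering only through Lemmas~\ref{lemma:ref} and~\ref{lemma:bidualeq} in the descent step, whereas the paper's argument leans on CM-ness of the height-$\leq1$ localizations; the paper's count, on the other hand, is self-contained at the level of annihilators and avoids invoking the theory of associated primes of $M$.
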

\begin{proof}
Take an element $x \in R$ with $y := rx \in {\rm char}_{R}(M)$. Let us show $x \in {\rm char}_{R}(M)$. 
By Lemma~\ref{lemma:ref}, the ideal ${\rm char}_{R}(M)$ is reflexive. Since $R$ satisfies (S$_{2}$) and localization of modules is an exact functor, by Lemma~\ref{lemma:bidualeq}, we may assume that $R$ is a Cohen-Macaulay local ring with $\dim(R) \leq 1$.  
Furthermore, we may also assume that $r \in \fm_{R}$. Here $\fm_{R}$ denotes the maximal ideal of $R$.  
%it suffices to show that 
%\[
%x \in {\rm char}_{R_{\fr}}(M_{\fr})
%\]
%for any prime ideal $\fr$ of $R$ with ${\rm ht}(\fr) \leq 1$. Here $M_{\fr} := M \otimes_{R} R_{\fr}$. 

%Take a prime ideal $\fr$ of $R$ with ${\rm ht}(\fr) \leq 1$. When $r \not\in \fr$, we have $x = r^{-1}y \in {\rm char}_{R_{\fr}}(M_{\fr})$. 
%Suppose that $r \in \fr$. 
Put $I := \Ann_{R}(M)$. Let us show that $I = R$. We assume the contradiction, namely, $I \subset \fm_{R}$. 
Since $r$ is $M$-regular, the homomorphism 
\[
\End_{R}(M) \longrightarrow \End_{R}(M); \, f \mapsto rf
\] 
is injective, and $r$ is also $R/I$-regular since the homomorphism $R/I  \longrightarrow \End_{R}(M)$ is injective. 
This fact implies that 
\[
\dim(R/I) = \dim(R/(I + rR)) + 1. 
\]
Furthermore, since $R$ is Cohen-Macaulay, we have 
\[
\dim(R) = {\rm ht}(I) + \dim(R/I) = {\rm ht}(I) +  \dim(R/(I + rR)) + 1. 
\]
The fact that $M$ is a torsion $R$-module implies that ${\rm ht}(I) = {\rm grade}(I) \geq 1$, and hence $\dim(R)  \geq 2$. This contradicts the assmption that $\dim(R) \leq 1$. 

Since $I = R$, the module $M$ vanishes, which implies $x \in R =  {\rm char}_{R}(0) = {\rm char}_{R}(M)$. 
%Since $r$ is $M$-regular, the homomorphism 
%\[
%\End_{R}(M) \longrightarrow \End_{R}(M); \, f \mapsto rf
%\] 
%is injective. Hence $r$ is also $R/I$-regular since the homomorphism $R/I  \longrightarrow \End_{R}(M)$ is injective. 
%
%Since $M$ is a torsion $R$-module, there is a regular element $r'$ of $R$ with $r' M = 0$. 
%Since $r$ is $M$-regular, we see that 
%\[
%r R \cap r' R = rr' R. 
%\]
%Hence $r, r'$ is a regular sequence, and the Serre's condition (S$_{2}$) implies that 
%\[
%{\rm ht}(rR + r'R) = 2. 
%\]
%Let $\fr$ be a prime ideal  of $R$ with ${\rm ht}(\fr) \leq 1$. 
%Then $r \not\in \fr$ or $r' \not\in \fr$ since ${\rm ht}(\fr) <  {\rm ht}(rR + r'R)$. 
%When $r \not\in \fr$, we have $x = r^{-1}y \in {\rm char}_{R_{\fr}}(M_{\fr})$
%If $r \in \fr$, then we have $r' \not\in \fr$, which implies the latter assertion and $x \in R_{\fr} =  {\rm char}_{R_{\fr}}(M_{\fr})$. 
\end{proof}

\begin{proposition}\label{prop:char-local}
For any field $K \in \Omega$ and  prime $\fq \not\in S$ of $k$, we have 
\[
{\rm char}_{\Lambda_{K}}(H^{1}(G_{k_{\fq}}, \bT_{K}))\Lambda_{K}[1/p] = f_{K,\fq} \Lambda_{K}[1/p]. 
\] 
\end{proposition}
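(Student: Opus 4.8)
The plan is to verify the claimed equality of ideals after the faithfully flat base change $\Lambda_{K}[1/p]\hooklongrightarrow\Lambda_{K,\overline{\bQ}_{p}}=\prod_{\psi\in\cG_{K}}\Lambda_{K,\overline{\bQ}_{p}}e_{K,\psi}$. Since ${\rm char}_{R}(-)$ commutes with flat base change and $\Lambda_{K}\to\Lambda_{K}[1/p]\to\Lambda_{K,\overline{\bQ}_{p}}$ is a composite of flat maps, and since an ideal of $\Lambda_{K}[1/p]$ is recovered from its extension to $\Lambda_{K,\overline{\bQ}_{p}}$, it suffices to show, for each $\psi\in\cG_{K}$, that
\[
{\rm char}_{R_{\psi}}\!\left(e_{K,\psi}H^{1}(G_{k_{\fq}},\bT_{K})\right)=f_{K,\fq}R_{\psi},\qquad R_{\psi}:=\Lambda_{K,\overline{\bQ}_{p}}e_{K,\psi}\cong\Lambda\otimes_{\cO}\overline{\bQ}_{p},
\]
where on the left one may compute via the flat base change $H^{1}(G_{k_{\fq}},\bT_{K})\otimes_{\Lambda_{K}}R_{\psi}\cong H^{1}(G_{k_{\fq}},\bT_{K}\otimes_{\Lambda_{K}}R_{\psi})$ (the complex ${\bf R}\Gamma(G_{k_{\fq}},\bT_{K})$ is perfect over $\Lambda_{K}$), and $R_{\psi}$ is a regular domain.

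The second step is the local Iwasawa cohomology computation. Because $S_{p}(k)\subset S$ and $\fq\notin S$ we have $\fq\nmid p$; since $T$ is unramified at $\fq$ and $K/k$ is tamely ramified at $\fq$, the inertia group $I_{\fq}$ acts on $\bT_{K}$ through the finite cyclic $p$-group $\cI_{K,\fq}$, with wild inertia acting trivially. Writing $K':=K^{\cI_{K,\fq}}$ for the inertia field and $N_{\cI_{K,\fq}}=\sum_{\sigma\in\cI_{K,\fq}}\sigma$, one obtains the standard identifications $\bT_{K}^{I_{\fq}}\cong\bT_{K'}$ (via $t\otimes\lambda\mapsto t\otimes\lambda N_{\cI_{K,\fq}}$, equivariantly for $G_{k_{\fq}}/I_{\fq}$), $(\bT_{K})_{I_{\fq}}\cong\bT_{K'}$, $H^{1}(I_{\fq},\bT_{K})\cong(\bT_{K})_{I_{\fq}}(-1)\cong\bT_{K'}(-1)$, together with the Hochschild--Serre exact sequence for $1\to I_{\fq}\to G_{k_{\fq}}\to\widehat{\bZ}\to1$,
\[
0\longrightarrow\bigl(\bT_{K}^{I_{\fq}}\bigr)_{{\rm Fr}_{\fq}-1}\longrightarrow H^{1}(G_{k_{\fq}},\bT_{K})\longrightarrow H^{1}(I_{\fq},\bT_{K})^{{\rm Fr}_{\fq}=1}\longrightarrow0 .
\]

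Now fix $\psi\in\cG_{K}$ and apply $e_{K,\psi}$. If $\psi(\cI_{K,\fq})\neq1$, then $\cI_{K,\fq}$ acts trivially on both $\bT_{K}^{I_{\fq}}$ and $\bT_{K'}$, so $e_{K,\psi}\bT_{K}^{I_{\fq}}=e_{K,\psi}\bT_{K'}=0$; hence $e_{K,\psi}H^{1}(G_{k_{\fq}},\bT_{K})=0$, and both sides of the claim equal $R_{\psi}=f_{K,\fq}R_{\psi}$. If $\psi(\cI_{K,\fq})=1$, then $\psi$ descends to $\psi'\in\cG_{K'}$, $e_{K,\psi}$ acts as $e_{K',\psi'}$ on $\cI_{K,\fq}$-trivial modules, and $e_{K,\psi}\bT_{K}^{I_{\fq}}\cong e_{K',\psi'}\bT_{K'}\cong T\otimes_{\cO}R_{\psi}$ is free of rank $n:=\mathrm{rk}_{\cO}T$ over $R_{\psi}$, on which ${\rm Fr}_{\fq}$ acts as the matrix $uA$, where $A\in M_{n}(\cO)$ is the matrix of ${\rm Fr}_{\fq}$ on $T$ (so $\det(1-xA)=P_{\fq}(x)$) and $u\in R_{\psi}^{\times}$ is the image of ${\rm Fr}_{\fq}^{-1}\in\Gal(k_{\infty}K'/k)$ (well defined since $\psi|_{\cI_{K,\fq}}=1$). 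The key point is that $u\notin\overline{\bQ}_{p}^{\times}$: the image of ${\rm Fr}_{\fq}$ in $\Gal(k_{\infty}/k)$ is nontrivial because $\fq$ does not split completely in $k_{\infty}$, and the only element of $\Gal(k_{\infty}/k)$ lying in $\overline{\bQ}_{p}^{\times}\subset R_{\psi}$ is $1$. Consequently $P_{\fq}(cu)=\det(1-cuA)$ is a nonzero element of the domain $R_{\psi}$ for every $c\in\overline{\bQ}_{p}^{\times}$; in particular $uA-I$ is injective and, since ${\rm Fr}_{\fq}$ acts on the Tate twist $\bT_{K'}(-1)$ as $cuA$ for a scalar $c\in\overline{\bQ}_{p}^{\times}$ (a power of $N\fq$), the operator ${\rm Fr}_{\fq}-1$ there is also injective. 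Hence $H^{1}(I_{\fq},\bT_{K})^{{\rm Fr}_{\fq}=1}e_{K,\psi}=0$ and $e_{K,\psi}H^{1}(G_{k_{\fq}},\bT_{K})\cong\coker\bigl(uA-I\colon R_{\psi}^{n}\to R_{\psi}^{n}\bigr)$. This module has projective dimension at most $1$, so Lemma~\ref{lemma:fitt}~(ii) gives ${\rm char}_{R_{\psi}}=\mathrm{Fitt}^{0}_{R_{\psi}}=\bigl(\det(uA-I)\bigr)=\bigl(P_{\fq}(u)\bigr)$, which equals $f_{K,\fq}R_{\psi}$ by the definition of $f_{K,\fq}$ and the choice of $u$. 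Summing over all $\psi$ and descending by faithful flatness completes the proof.

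The only non-formal input — and the sole place where the hypothesis on $k_{\infty}$ is used — is the non-vanishing of $P_{\fq}(cu)$, i.e. that $u$ is a \emph{non-constant} unit of $R_{\psi}$; this is what simultaneously forces $H^{1}(I_{\fq},\bT_{K})^{{\rm Fr}_{\fq}=1}[1/p]$ to vanish and makes $\det(uA-I)$ a nonzerodivisor, so that the characteristic ideal is a genuine principal ideal. I expect the care needed to lie in pinning down the local cohomology formulas with the correct conventions (arithmetic Frobenius, the $\iota$-twist in $\bT_{K}=T\otimes_{\cO}\Lambda_{K}^{\iota}$, the choice of generator of tame inertia and the resulting Tate twist on $H^{1}(I_{\fq},-)$), and in the bookkeeping that identifies $e_{K,\psi}$ with $e_{K',\psi'}$ after inverting $p$ when $\fq$ ramifies in $K$.
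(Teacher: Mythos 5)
Your proposal is correct and follows essentially the same route as the paper: after the faithfully flat base change to $\Lambda_{K,\overline{\bQ}_{p}}=\prod_{\psi}\Lambda_{K,\overline{\bQ}_{p}}e_{K,\psi}$ one computes each $\psi$-component of $H^{1}(G_{k_{\fq}},\bV_{K})$ as ${\rm Fr}_{\fq}$-coinvariants of the inertia invariants, getting the unit ideal when $\psi(\cI_{K,\fq})\neq 1$ and $\det(1-{\rm Fr}_{\fq})=P_{\fq}({\rm Fr}_{\fq}^{-1})e_{K,\psi}$ otherwise. The only difference is that the paper simply quotes \cite[Corollary~B.3.6]{R} for this description of the unramified local cohomology, whereas you rederive it via Hochschild--Serre and verify the vanishing of $H^{1}(I_{\fq},\bV_{K})^{{\rm Fr}_{\fq}=1}$ (and the injectivity of $1-{\rm Fr}_{\fq}$, hence the regularity of the resulting generator) directly from the standing hypothesis that no prime splits completely in $k_{\infty}$ --- a point the paper leaves implicit.
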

\begin{proof}
Put $\bV_{K} := \bT_{K} \otimes_{\cO} \overline{\bQ}_{p}$ and $H^{1}(G_{k_{\fq}}, \bV_{K}) := H^{1}(G_{k_{\fq}}, \bT_{K}) \otimes_{\cO} \overline{\bQ}_{p}$. 
Since the ring homomorphism $\Lambda_{K}[1/p] \longhookrightarrow \Lambda_{K, \overline{\bQ}_{p}}$ is faithfully flat, 
it suffice to prove that 
\[
{\rm char}_{\Lambda_{K, \overline{\bQ}_{p}}}(H^{1}(G_{k_{\fq}}, \bV_{K}))e_{K,\psi} = f_{K,\fq}  \Lambda_{K, \overline{\bQ}_{p}}e_{K,\psi}. 
\]
for any character $\psi \in \cG_{K}$. 

By \cite[Corollary~B.3.6]{R}, we have an isomorphism 
\begin{align}\label{isom:unr}
\bV_{K}^{\cI_{K,\fq}}/(1-{\rm Fr}_{\fq})\bV_{K}^{\cI_{K,\fq}} \cong H^{1}(G_{k_{\fq}}, \bV_{K}). 
\end{align}
%We note that since $\fq \not\in S$, we have 
%\[
%\bV_{K}^{\cI_{K,\fq}} = T \otimes_{\cO} (\Lambda_{K}^{\iota})^{\cI_{K,\fq}} \otimes_{\cO} \overline{\bQ}_{p}.  
%\]
Let $\psi \in \cG_{K}$ be a character. When $\psi(\cI_{K,\fq}) \neq 1$, the isomorphism~\eqref{isom:unr} shows that  the module 
$H^{1}(G_{k_{\fq}}, \bV_{K})e_{K,\psi}$ vanishes. 
Hence we have 
\[
{\rm char}_{\Lambda_{K, \overline{\bQ}_{p}}}(H^{1}(G_{k_{\fq}}, \bV_{K}))e_{K,\psi} = \Lambda_{K, \overline{\bQ}_{p}}e_{K,\psi} =  f_{K,\fq}  \Lambda_{K, \overline{\bQ}_{p}}e_{K,\psi}. 
\]
If $\psi(\cI_{K,\fq}) = 1$, the isomorphism~\eqref{isom:unr} shows that we have an exact sequence 
\[
0 \longrightarrow \bV_{K}e_{K,\psi} \xrightarrow{1-{\rm Fr}_{\fq}} \bV_{K}e_{K,\psi} \longrightarrow H^{1}(G_{k_{\fq}}, \bV_{K})e_{K,\psi} \longrightarrow 0. 
\]
Since $\bV_{K}e_{K,\psi}$ is a free $\Lambda_{K, \overline{\bQ}_{p}}e_{K,\psi}$-module, we have 
\[
{\rm char}_{\Lambda_{K, \overline{\bQ}_{p}}}(H^{1}(G_{k_{\fq}}, \bV_{K}))e_{K,\psi} = \det(1-{\rm Fr}_{\fq} \mid \bV_{K}e_{K,\psi}). 
\]
Furthermore, it is well-known that $\bT_{K} \cong T \otimes_{\cO} \Lambda_{K}^{\iota}$, where $\Lambda_{K}^{\iota}$ is a free $\Lambda_{K}$-module of rank $1$
defined by the homomorphism $\cO[[\Gal(k_{\infty}K/k)]] \longrightarrow \cO[[\Gal(k_{\infty}K/k)]]; g \mapsto g^{-1}$. 
Hence we conclude that 
\[
\det(1-{\rm Fr}_{\fq} \mid \bV_{K}e_{K,\psi}) = P_{\fq}({\rm Fr}_{\fq}^{-1})e_{K,\psi}.  
\]
\end{proof}

\section{On the module of  Euler systems for $\bG_{m}$}\label{sec:euler G_{m}} 

%In this section, we will prove the main theorem of this paper. 

Suppose that $k$ is a totally real field with $[k \colon \bQ] = r$. Let 
\[
\chi \colon G_{k} \longrightarrow \overline{\bQ}^{\times}
\] 
be a non-trivial finite order even character.  
Put $k_{\chi} := \overline{k}^{\ker(\chi)}$. 
Assume that $p$ is coprime to the class number of $k$ and $[k_{\chi} \colon k]$. 
We put 
\[
\cO := \bZ_{p}[\im(\chi)] \, \text{ and } \, S := S_{\infty}(k) \cup S_{p}(k) \cup S_{\rm ram}(k_{\chi} /k). 
\]
Let 
\[
T := \cO(1) \otimes \chi^{-1}, 
\]
that is, $T \cong \cO$ as $\cO$-modules and the Galois group $G_{k,S}$ acts on $T$ via the character 
$\chi_{\rm cyc}\chi^{-1}$, where $\chi_{\rm cyc}$ denotes the cyclotomic character of $k$. 
We write $k_{\infty}/k$ for the cyclotomic $\bZ_{p}$-extension of $k$. 
%We also set   
%\[
%\Lambda := \cO[[\Gal(k_{\infty}/k)]] \ \text{ and } \ \bT := T \otimes_{\cO} \Lambda^{\iota}. 
%\]
Let $\cK$ denote the maximal pro-$p$ abelian extension of $k$ satisfying $S_{\rm ram}(\cK/k) \cap S = \emptyset$.  
Let $\Omega$, $\Lambda_{K}$, and $\bT_{K}$ be as in \S\ref{sec:euler}. 
For any field $K \in \Omega$ and any integer $i \geq 0$, we put 
\[
H^{i}(G_{k_{p}}, \bT_{K}) := \bigoplus_{\fp \in S_{p}(k)}H^{i}(G_{k_{\fp}},\bT_{K}). 
\]

%\begin{lemma}\label{lem:str}
%Suppose that 
%\[
%H^{0}(G_{k_{\fp}},T/\fm T) = H^{2}(G_{k_{\fp}},T/\fm T) = 0
%\]
%for each prime $\fp \in S_{p}(k)$. 
%Then Hypotheses (H.0) -- (H.3) are satisfied. 
%\end{lemma}

\subsection{Stickelberger elements}
In this subsection, we will recall the definition of Stickelberger elements. 
The contents of this subsection are based on \cite[\S3.1]{hres}.

Let $K \in \Omega$ be a field and $n \geq 0$ an integer. 
Let $\mu_{p^{n}}$ denote the set of $p^{n}$-th roots of unity in $\overline{\bQ}$ and $\mu_{p^{\infty}} := \bigcup_{n>0}\mu_{p^{n}}$. 
For notational simplicity, we set
\[
G_{K,n} := \Gal(k_{\chi}K(\mu_{p^{n}})/k) = \Gal(k_{\chi}/k)  \times \Gal(K/k) \times \Gal(k(\mu_{p^{n}})/k). 
\] 
Let 
\[
\omega \colon G_{k,1} \longrightarrow \Gal(k(\mu_{p})/k) \longhookrightarrow \bZ_{p}^{\times} 
\] 
denote the Teichm\"ullar character. 
We write $\zeta_{k_{\chi}K,n,S}(s,\sigma)$ for the partial zeta function for $\sigma \in G_{K,n}$: 
\[
\zeta_{k_{\chi}K, n, S}(s,\sigma) := \sum_{(\fa, k_{\chi}K(\mu_{p^{n}})/k) = \sigma}N(\fa)^{-s}, 
\]
where $\fa$ runs over all integral ideals of $k$ coprime to all the primes in $S_{K}$ such that the Artin symbol $(\fa, k_{\chi}K(\mu_{p^{n}})/k)$ is equal to $\sigma$ and $N(\fa)$ denotes the norm of $\fa$. 
%The function $\zeta_{k_{\chi}K, n, S}(s,\sigma)$ is continued to  a holomorphic function on $\bC \setminus \{1\}$. 
We put
\[
\theta_{k_{\chi}K,n,S} := \sum_{\sigma \in G_{K,n}}\zeta_{k_{\chi}K,n,S}(0,\sigma)\sigma^{-1}, 
\]
which is contained in $\bQ[G_{K,n}]$ (see \cite{Sie70}). 
The elements $\{\theta_{k_{\chi}K,n,S}\}_{n>0}$ are norm-coherent by \cite[Proposition~IV.1.8]{tatebook}. 
In addition, Deligne and Ribet proved in \cite{DR} that the element $e_{\omega\chi^{-1}}\theta_{k_{\chi}K,n,S}$ is contained in 
$\cO[G_{K,n}]e_{\omega\chi^{-1}}$. 
Here 
\[
e_{\omega\chi^{-1}} := \frac{1}{[k_{\chi}(\mu_{p}) \colon k]} \sum_{\sigma \in \Gal(k_{\chi}(\mu_{p})/k)}\omega\chi^{-1}(\sigma)\sigma^{-1}. 
\]
Hence we obtain an element 
\[
e_{\omega\chi^{-1}}\theta_{k_{\chi}K, \infty,S} := \varprojlim_{n>0}e_{\omega\chi^{-1}}\theta_{k_{\chi}K, n, S} \in \cO[[\Gal(k_{\chi}K(\mu_{p^{\infty}})/k)]]e_{\omega\chi^{-1}}. 
\]
Let 
\[
{\rm Tw} \colon \cO[[\Gal(k_{\chi}K(\mu_{p^{\infty}})/k)]] \longrightarrow  \cO[[\Gal(k_{\chi}K(\mu_{p^{\infty}})/k)]]
\]
denote the homomorphism induced by $\sigma \mapsto \chi_{\rm cyc}(\sigma)\sigma^{-1}$ for $\sigma \in \Gal(k_{\chi}K(\mu_{p^{\infty}})/k)$. 
Then we get an element
\[
\widetilde{L}_{p,K}^{\chi} :=  {\rm Tw}(e_{\omega\chi^{-1}}\theta_{k_{\chi}K, \infty, S}) \in \Lambda_{K}.  
\]
For each prime $\fq \not\in S$, we set 
\[
u_{\fq} := \chi(\widetilde{{\rm Fr}}_{\fq})^{-1}\chi_{\rm cyc}(\widetilde{{\rm Fr}}_{\fq})\widetilde{{\rm Fr}}_{\fq}^{-1} \in \Lambda_{K}^{\times}, 
\] 
where $\widetilde{{\rm Fr}}_{\fq} \in \Gal(k_{\infty}\cK/k)$ is the fixed lift of the arithmetic Frobenius at $\fq$.

\begin{definition}
For each field $K \in \Omega$, we define a modified $p$-adic $L$-function $L_{p,K}^{\chi} \in \Lambda_{K}$ by
\[
L_{p,K}^{\chi} := \left(\prod_{\fq \in S_{\rm ram}(K/k)}(-u_{\fq}) \right) \cdot \widetilde{L}_{p,K}^{\chi}. 
\]
\end{definition}

\begin{lemma}[{\cite[Lemma~3.5]{hres}}]\label{lem:rel2}
$(L_{p,K}^{\chi})_{K \in \Omega} \in \mathrm{ES}_{0}(T)$. 
\end{lemma}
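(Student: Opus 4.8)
The plan is to identify ${\rm ES}_{0}(T)$ with ${\rm ES}(\{\Lambda_{K}\}_{K\in\Omega})$, so that the claim turns into an explicit congruence between the $L^{\chi}_{p,K}$, and then to obtain that congruence by transporting the classical change-of-field formula for Stickelberger elements through the two operations $e_{\omega\chi^{-1}}(-)$ and ${\rm Tw}$ that enter the definition of $L^{\chi}_{p,K}$. First I would note that for any finitely generated $\Lambda_{K}$-module $M$ one has ${\bigcap}^{0}_{\Lambda_{K}}M=\bigl({\bigwedge}^{0}_{\Lambda_{K}}(M^{*})\bigr)^{*}=(\Lambda_{K})^{*}=\Lambda_{K}$; hence ${\rm ES}_{0}(T)={\rm ES}(\{\Lambda_{K}\}_{K\in\Omega})$ with transition maps the canonical projections $\pi_{K,L}\colon\Lambda_{K}\to\Lambda_{L}$ for $L\subset K$ (we also write $\pi_{K,L}$ for the analogous projections of the completed group rings occurring below). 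Thus it suffices to prove
\[
\pi_{K,L}\bigl(L^{\chi}_{p,K}\bigr)=\Bigl(\prod_{\fq\in S_{K}\setminus S_{L}}P_{\fq}({\rm Fr}_{\fq}^{-1})\Bigr)L^{\chi}_{p,L}\qquad(K,L\in\Omega,\ L\subset K).
\]

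The key input is the change-of-field formula for Stickelberger elements: for $L\subset K$ in $\Omega$ and $n>0$, the natural surjection $\bQ[G_{K,n}]\to\bQ[G_{L,n}]$ carries $\theta_{k_{\chi}K,n,S}$ to $\bigl(\prod_{\fq\in S_{K}\setminus S_{L}}(1-{\rm Fr}_{\fq}^{-1})\bigr)\theta_{k_{\chi}L,n,S}$, where ${\rm Fr}_{\fq}$ denotes the Frobenius of $\fq$ in $\Gal(k_{\chi}L(\mu_{p^{n}})/k)$; this makes sense because each $\fq\in S_{K}\setminus S_{L}$ lies outside $S_{L}\supseteq S_{p}(k)\cup S_{\rm ram}(k_{\chi}/k)$ and is therefore unramified in $k_{\chi}L(\mu_{p^{n}})/k$. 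I would prove this by the usual partial-zeta-function manipulation: summing the values $\zeta_{k_{\chi}K,n,S}(s,\sigma)$ over the fibres of $G_{K,n}\to G_{L,n}$ produces the analogue of $\theta_{k_{\chi}L,n,S}$ in which coprimality is imposed with respect to $S_{K}$ in place of $S_{L}$, while for each additional prime $\fq$ the Stickelberger element with coprimality to $S_{L}\cup\{\fq\}$ differs from the one with coprimality to $S_{L}$ by the elementary Euler factor $1-N\fq^{-s}{\rm Fr}_{\fq}^{-1}$; evaluating at $s=0$ and iterating over the finitely many primes of $S_{K}\setminus S_{L}$ gives the formula (this is a standard consequence of the norm-compatibility and Euler-factor relations for Stickelberger elements; see \cite[\S3.1]{hres}). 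Since the factors $1-{\rm Fr}_{\fq}^{-1}$ are compatible with the norm maps as $n$ varies, passing to the inverse limit over $n$ and applying $e_{\omega\chi^{-1}}$ yields
\[
\pi_{K,L}\bigl(e_{\omega\chi^{-1}}\theta_{k_{\chi}K,\infty,S}\bigr)=\Bigl(\prod_{\fq\in S_{K}\setminus S_{L}}(1-{\rm Fr}_{\fq}^{-1})\Bigr)\,e_{\omega\chi^{-1}}\theta_{k_{\chi}L,\infty,S}
\]
in $\cO[[\Gal(k_{\chi}L(\mu_{p^{\infty}})/k)]]e_{\omega\chi^{-1}}$.

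Next I would apply ${\rm Tw}$. Since $\chi_{\rm cyc}$ factors through $\Gal(k(\mu_{p^{\infty}})/k)$, the operator ${\rm Tw}$ commutes with $\pi_{K,L}$, and by definition ${\rm Tw}$ carries $e_{\omega\chi^{-1}}\theta_{k_{\chi}K,\infty,S}$ and $e_{\omega\chi^{-1}}\theta_{k_{\chi}L,\infty,S}$ to $\tilde{L}^{\chi}_{p,K}$ and $\tilde{L}^{\chi}_{p,L}$ respectively; applying ${\rm Tw}$ to the last identity therefore gives
\[
\pi_{K,L}\bigl(\tilde{L}^{\chi}_{p,K}\bigr)=\Bigl(\prod_{\fq\in S_{K}\setminus S_{L}}{\rm Tw}\bigl(e_{\omega\chi^{-1}}(1-{\rm Fr}_{\fq}^{-1})\bigr)\Bigr)\tilde{L}^{\chi}_{p,L}.
\]
Now ${\rm Tw}$ sends the group element ${\rm Fr}_{\fq}^{-1}$ to $\chi_{\rm cyc}({\rm Fr}_{\fq})^{-1}{\rm Fr}_{\fq}=N\fq^{-1}{\rm Fr}_{\fq}$, so a short computation using the decomposition $\Gal(k_{\chi}L(\mu_{p^{\infty}})/k)\cong\Gal(k_{\infty}L/k)\times\Gal(k_{\chi}(\mu_{p})/k)$ (legitimate because $p\nmid h_{k}\cdot[k_{\chi}\colon k]$) and the identification (induced by ${\rm Tw}$) of the $e_{\omega\chi^{-1}}$-component of $\cO[[\Gal(k_{\chi}L(\mu_{p^{\infty}})/k)]]$ with $\Lambda_{L}$ shows that ${\rm Tw}\bigl(e_{\omega\chi^{-1}}(1-{\rm Fr}_{\fq}^{-1})\bigr)$ corresponds in $\Lambda_{L}$ to $1-\chi({\rm Fr}_{\fq})N\fq^{-1}{\rm Fr}_{\fq}=1-u_{\fq}^{-1}$, where now ${\rm Fr}_{\fq}$ is the image of the fixed lift in $\Gal(k_{\infty}L/k)$ and $u_{\fq}=\chi({\rm Fr}_{\fq})^{-1}\chi_{\rm cyc}({\rm Fr}_{\fq}){\rm Fr}_{\fq}^{-1}$ as in its definition. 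Hence $\pi_{K,L}(\tilde{L}^{\chi}_{p,K})=\bigl(\prod_{\fq\in S_{K}\setminus S_{L}}(1-u_{\fq}^{-1})\bigr)\tilde{L}^{\chi}_{p,L}$.

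Finally I would carry out the bookkeeping of unit factors, which is precisely what the twisting constant in $L^{\chi}_{p,K}$ is designed for. Because $T$ has rank one, $P_{\fq}({\rm Fr}_{\fq}^{-1})=1-\chi_{\rm cyc}({\rm Fr}_{\fq})\chi({\rm Fr}_{\fq})^{-1}{\rm Fr}_{\fq}^{-1}=1-u_{\fq}$, so $u_{\fq}\in\Lambda_{L}^{\times}$ and $1-u_{\fq}^{-1}=-u_{\fq}^{-1}P_{\fq}({\rm Fr}_{\fq}^{-1})$. Multiplying the last identity by $\prod_{\fq\in S_{K}\setminus S}(-u_{\fq})$, using $\pi_{K,L}(u_{\fq})=u_{\fq}$ and the partition $S_{K}\setminus S=(S_{K}\setminus S_{L})\sqcup(S_{L}\setminus S)$, the factors $(-u_{\fq})$ with $\fq\in S_{K}\setminus S_{L}$ cancel against the factors $(-u_{\fq}^{-1})$ just produced, and one obtains
\[
\pi_{K,L}\bigl(L^{\chi}_{p,K}\bigr)=\Bigl(\prod_{\fq\in S_{K}\setminus S_{L}}P_{\fq}({\rm Fr}_{\fq}^{-1})\Bigr)\Bigl(\prod_{\fq\in S_{L}\setminus S}(-u_{\fq})\Bigr)\tilde{L}^{\chi}_{p,L}=\Bigl(\prod_{\fq\in S_{K}\setminus S_{L}}P_{\fq}({\rm Fr}_{\fq}^{-1})\Bigr)L^{\chi}_{p,L},
\]
as required, so $(L^{\chi}_{p,K})_{K\in\Omega}\in{\rm ES}_{0}(T)$. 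The one genuinely delicate point is the middle step: one must keep careful track of which Galois group each Frobenius element inhabits, and of how the idempotent $e_{\omega\chi^{-1}}$ (respectively its image under ${\rm Tw}$) identifies the relevant direct factor of the completed group ring with $\Lambda_{K}$ (respectively $\Lambda_{L}$); it is exactly this identification that converts the ``arithmetic'' Euler factor $1-{\rm Fr}_{\fq}^{-1}$ into $1-u_{\fq}^{-1}$ and thereby explains why the correction $\prod_{\fq\in S_{K}\setminus S}(-u_{\fq})$ appears in the definition of $L^{\chi}_{p,K}$.
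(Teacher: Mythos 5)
Your proposal is correct: the reduction to the congruence $\pi_{K,L}(L^{\chi}_{p,K})=\bigl(\prod_{\fq\in S_{K}\setminus S_{L}}P_{\fq}({\rm Fr}_{\fq}^{-1})\bigr)L^{\chi}_{p,L}$ (using that $\bigcap^{0}_{\Lambda_{K}}$ of anything is $\Lambda_{K}$ with transition maps the projections $\pi_{K,L}$, consistent with how $\mathrm{ES}_{0}(T)$ is used in the proof of Theorem~\ref{thm:main}), the change-of-field/Euler-factor relation for the Stickelberger elements $\theta_{k_{\chi}K,n,S}$, the twist computation turning $1-{\rm Fr}_{\fq}^{-1}$ into $1-u_{\fq}^{-1}=-u_{\fq}^{-1}P_{\fq}({\rm Fr}_{\fq}^{-1})$, and the cancellation against the normalizing factors $\prod_{\fq\in S_{K}\setminus S}(-u_{\fq})$ all check out. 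The paper itself gives no proof of this lemma, citing \cite[Lemma~3.5]{hres} instead, and your argument is precisely the standard computation that citation carries out, so there is nothing further to compare.
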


\subsection{Iwasawa modules and characteristic ideals}

In this subsection, we introduce several Iwasawa modules and compute its characteristic ideals. 

We set 
\[
e_{\chi} := \frac{1}{[k_{\chi}(\mu_{p}) \colon k]} \sum_{\sigma \in \Gal(k_{\chi}(\mu_{p})/k)}\chi(\sigma)\sigma^{-1}. 
\]

\begin{definition}
Let  $K \in \Omega$ be a field. 
\begin{itemize}
\item[(i)] We write $M_{k_{\chi}K, \infty}$ for the maximal $p$-ramified pro-$p$ abelian extension of $k_{\chi}K(\mu_{p^{\infty}})$ and set 
\begin{align*}
X_{K}^{\chi} := e_{\chi}\left(\Gal(M_{k_{\chi}K,\infty}/k_{\chi}K(\mu_{p^{\infty}})) \otimes_{\bZ_{p}} \cO\right). 
\end{align*}
\item[(ii)] We write $M_{k_{\chi}K, S, \infty}$ for the maximal $S_{K}$-ramified pro-$p$ abelian extension of $k_{\chi}K(\mu_{p^{\infty}})$ and set 
\begin{align*}
X_{K, S}^{\chi} := e_{\chi}\left(\Gal(M_{k_{\chi}K,S,\infty}/k_{\chi}K(\mu_{p^{\infty}})) \otimes_{\bZ_{p}} \cO\right). 
\end{align*}
\item[(iii)] We write $N_{k_{\chi}K, \infty}$ for the maximal unramified pro-$p$ abelian extension of $k_{\chi}K(\mu_{p^{\infty}})$ and set 
\begin{align*}
Y_{K}^{\chi} := e_{\chi}\left(\Gal(N_{k_{\chi}K,\infty}/k_{\chi}K(\mu_{p^{\infty}})) \otimes_{\bZ_{p}} \cO\right). 
\end{align*}
\item[(iv)] We write $H_{K, p}^{\chi}$ for the $\Lambda_{K}$-submodule of $Y_{K}^{\chi}$ generated by the (finite) set of the Frobenius elements $\{{\rm Fr}_{\fp} \mid \fp \in S_{p}(k_{\chi}K(\mu_{\infty})) \}$. 
\end{itemize}
\end{definition}

Let $K \in \Omega$ be a field. 
By the weak Leopoldt conjecture proved by Iwasawa in \cite{Iwa73a}, the localization map 
\[
H^{1}(G_{k,S_{K}},\bT_{K}) \longrightarrow H^{1}(G_{k_{p}}, \bT_{K})
\]
is injective. Hence by global duality, we obtain the following two exact sequences of $\Lambda_{K}$-modules (see \cite[(4) and (5) in page~11]{hres}):
\begin{align}\label{exact:fundamental}
0 \longrightarrow H^{1}(G_{k,S_{K}},\bT_{K}) \longrightarrow H^{1}(G_{k_{p}}, \bT_{K}) \longrightarrow X_{K}^{\chi} 
\longrightarrow Y_{K}^{\chi}/H_{K,p}^{\chi} \longrightarrow 0, 
\end{align} 
\begin{align}\label{exact:diff}
0 \longrightarrow \bigoplus_{\fq \in S_{K} \setminus S} H^{1}(G_{k_{\fq}}, \bT_{K}) \longrightarrow 
X_{K,S}^{\chi} \longrightarrow X_{K}^{\chi} \longrightarrow 0. 
\end{align}
We put 
\[
C_{K} := {\rm coker}\left(H^{1}(G_{k,S_{K}},\bT_{K}) \longrightarrow H^{1}(G_{k_{p}}, \bT_{K}) \right). 
\]
Then the exact sequence~\eqref{exact:fundamental} splits into the following two short exact sequences 
\begin{align}\label{exact:1}
0 \longrightarrow H^{1}(G_{k,S_{K}},\bT_{K}) \longrightarrow H^{1}(G_{k_{p}}, \bT_{K}) \longrightarrow C_{K} \longrightarrow 0, 
\end{align} 
\begin{align}\label{exact:2}
0 \longrightarrow C_{K} \longrightarrow X_{K}^{\chi} 
\longrightarrow Y_{K}^{\chi}/H_{K,p}^{\chi} \longrightarrow 0. 
\end{align}

Since the $\Lambda_{K}$-module  $X_{K,S}^{\chi}$ does not have a non-trivial pseudo-null submodule for any field $K \in \Omega$, 
the following lemma is  proved by Iwasawa in \cite{Iwa73b}. 

\begin{lemma}\label{lemma:p-torsion}
If $X_{k}^{\chi} = X_{k,S}^{\chi}$ is a finitely generated $\bZ_{p}$-module, then  $X_{K,S}^{\chi}$ is a free $\bZ_{p}$-module of finite rank for any field $K \in \Omega$. 
\end{lemma}

\begin{proposition}[{\cite[Proposition~2.2]{Kur03}}]\label{prop:char-wiles}
Suppose that $X_{k}^{\chi}$ is a finitely generated $\bZ_{p}$-module. 
Then, for any field $K \in \Omega$, we have 
\[
 {\rm char}_{\Lambda_{K}}(X_{K,S}^{\chi}) = L_{p,K}^{\chi}  \Lambda_{K}. 
\]
\end{proposition}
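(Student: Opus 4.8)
The plan is to reduce the computation of $\mathrm{char}_{\Lambda_K}(X_{K,S}^\chi)$ to the Iwasawa Main Conjecture for totally real fields (in the form proved by Wiles), following \cite[Proposition~2.2]{Kur03}. First I would recall that the Main Conjecture identifies the characteristic ideal of the minus part of the relevant unramified Iwasawa module over $k_\chi K(\mu_{p^\infty})$ with the ideal generated by the Deligne--Ribet $p$-adic $L$-function $\tilde L_{p,K}^\chi$; after applying the twist $\mathrm{Tw}$ and the $e_\chi$-projection, this becomes a statement about $\mathrm{char}_{\Lambda_K}$ of the module $X_{K,S}^\chi$ (or a closely related module) being generated by $\tilde L_{p,K}^\chi$ up to the Euler-type correction factors $\prod_{\fq \in S_K \setminus S}(-u_\fq)$, which is exactly the passage from $\tilde L_{p,K}^\chi$ to $L_{p,K}^\chi$. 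The hypothesis that $X_k^\chi$ is $p$-torsion-free enters here to guarantee, via Lemma~\ref{lemma:p-torsion}, that $X_{K,S}^\chi$ is $p$-torsion-free, hence has no nonzero pseudo-null submodule and behaves well under the characteristic-ideal formalism of \S4; in particular one can pass freely between $\mathrm{char}_{\Lambda_K}$ and the classical characteristic ideal by Corollary~\ref{cor:char-fitt} applied on the normal locus.

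The key steps in order: (1) Fix $K \in \Omega$ and work with the module $\Gal(M_{k_\chi K, S, \infty}/k_\chi K(\mu_{p^\infty}))$, i.e.\ the Galois group of the maximal $S_K$-ramified abelian pro-$p$ extension; identify, via Kummer theory / reflection, its $e_\chi$-part $X_{K,S}^\chi$ with the $\omega\chi^{-1}$-part of the corresponding unramified-outside-$S_K$ module before twisting. (2) Invoke the Iwasawa Main Conjecture over the totally real field $k$ (Wiles), which computes the characteristic ideal of this module in terms of $e_{\omega\chi^{-1}}\theta_{k_\chi K,\infty,S}$; here one uses $p \nmid h_k [k_\chi:k]$ so that the relevant idempotents are defined over $\cO$ and the descent from $k_\chi K(\mu_{p^\infty})$ to the base is clean. (3) Apply the twisting operator $\mathrm{Tw}$, sending $\sigma \mapsto \chi_{\rm cyc}(\sigma)\sigma^{-1}$, to convert the statement about the $\omega\chi^{-1}$-component into one about $\bT_K = T \otimes_\cO \Lambda_K^\iota$, turning $e_{\omega\chi^{-1}}\theta_{k_\chi K,\infty,S}$ into $\tilde L_{p,K}^\chi$. (4) Account for the $S_K \setminus S$ ramified primes: the difference between the $S_K$-ramified and $S$-ramified modules is governed by local cohomology, and the correction is precisely the product of the factors $-u_\fq$ appearing in the definition of $L_{p,K}^\chi$; combined with the $p$-torsion-freeness this yields $\mathrm{char}_{\Lambda_K}(X_{K,S}^\chi) = L_{p,K}^\chi \Lambda_K$ on the nose.

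The main obstacle I expect is step (2) together with the bookkeeping in step (4): one must be careful that the version of the Main Conjecture being cited is the one with the correct set of allowed ramification (matching $S$ rather than just $S_p(k) \cup S_\infty(k)$), and that no extraneous Euler factors or trivial-zero phenomena at primes in $S \setminus (S_p(k) \cup S_\infty(k)) = S_{\rm ram}(k_\chi/k)$ are introduced; since $\chi$ is nontrivial these primes contribute invertible factors, but this needs to be checked. The reduction to Krull dimension $\le 1$ via Lemma~\ref{lemma:bidualeq} and Corollary~\ref{cor:char-fitt} makes the comparison of ideals routine once the Main Conjecture input is correctly formulated, so the substance of the argument is entirely in correctly transcribing Wiles's theorem through the twist and the $S_K$-versus-$S$ comparison, exactly as done in \cite[Proposition~2.2]{Kur03}.
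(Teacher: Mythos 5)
Your overall skeleton (Wiles' main conjecture as input, via Kurihara, plus the $p$-torsion-freeness and a reduction to height $\leq 1$ primes) points in the right direction, but the proposal misplaces the two steps where the actual difficulty lies, and as written it would break down precisely there. First, the paper does not re-derive the main-conjecture computation by character decomposition and Euler-factor bookkeeping; it quotes Kurihara's theorem in the form ${\rm Fitt}_{\Lambda_{K}}^{0}(X_{K,S}^{\chi}) = L_{p,K}^{\chi}\Lambda_{K}$, which is a genuinely \emph{equivariant} statement over the group ring $\Lambda_{K}=\Lambda[\Gal(K/k)]$ with $\Gal(K/k)$ a $p$-group. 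Your step (2) proposes to get the $\Lambda_{K}$-statement from Wiles "cleanly" because $p\nmid h_{k}[k_{\chi}:k]$, but that hypothesis only makes the idempotents for the tame part $\Gal(k_{\chi}(\mu_{p})/k)$ integral; the characters of the $p$-group $\Gal(K/k)$ have non-integral idempotents, so character-by-character arguments only prove the statement after inverting $p$ (that is the content of Proposition~\ref{prop:char}, not of Proposition~\ref{prop:char-wiles}). Producing the integral, equivariant ideal at $p$ is exactly the nontrivial content of \cite[Proposition~2.2]{Kur03} (it uses the vanishing of the $\mu$-invariant and a Fitting-ideal/projective-dimension argument over the group ring), and it cannot be dismissed as transcription.

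Second, even granting Kurihara's Fitting-ideal identity, the statement to be proved concerns the exterior-bi-dual characteristic ideal of Definition~4.1, and the comparison ${\rm Fitt}^{0}={\rm char}$ is not "routine via Corollary~\ref{cor:char-fitt} on the normal locus": Corollary~\ref{cor:char-fitt} needs normality, which fails at the height-one primes $\fr$ of $\Lambda_{K}$ containing $p$ whenever $\Gal(K/k)\neq 1$. The paper's proof handles exactly this locus by Lemma~\ref{lemma:torsion}: since $X_{K,S}^{\chi}$ is a torsion $\Lambda_{K}$-module that is $p$-torsion-free (Lemma~\ref{lemma:p-torsion}), its localization at every such $\fr$ vanishes, so both ideals become the unit ideal there, and then reflexivity (Lemmas~\ref{lemma:ref} and \ref{lemma:bidualeq}, together with the fact that ${\rm Fitt}^{0}$ is principal on a regular element, hence reflexive) gives equality globally. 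Your stated mechanism for the $p$-torsion-freeness hypothesis --- "no nonzero pseudo-null submodule" --- is not the operative point: a module like $\Lambda_{K}/p$ has no finite submodule yet would obstruct the equality at $p$; what is needed is the vanishing of the $\mu$-invariant, i.e.\ triviality of the support at height-one primes above $p$. Your concern about extraneous Euler factors at primes of $S_{\rm ram}(k_{\chi}/k)$ is legitimate but is already absorbed into the cited form of Kurihara's result and into the definition of $L_{p,K}^{\chi}$ from $\tilde{L}_{p,K}^{\chi}$.
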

\begin{proof}
Lemma~\ref{lemma:p-torsion} shows that $X_{K,S}^{\chi}$ is a free $\bZ_{p}$-module of finite rank.   
In this case, by using the Iwasawa main conjecture proved by Wiles in \cite{Wiles90}, Kurihara proved that 
\[
 {\rm Fitt}_{\Lambda_{K}}^{0}(X_{K,S}^{\chi}) = L_{p,K}^{\chi}  \Lambda_{K}
\]
(see \cite[Proposition~2.2]{Kur03}). 
Since $L_{p,K}^{\chi}$ is a regular element of $\Lambda_{K}$, we have 
\[
{\rm Fitt}_{\Lambda_{K}}^{0}(X_{K,S}^{\chi}) =  {\rm Fitt}_{\Lambda_{K}}^{0}(X_{K,S}^{\chi})^{**}. 
\] 
Hence by Lemmas~\ref{lemma:ref}~and~\ref{lemma:bidualeq}, it suffices to show that 
\[
 {\rm Fitt}_{\Lambda_{K}}^{0}(X_{K,S}^{\chi}) \Lambda_{K, \fr} =  {\rm char}_{\Lambda_{K}}(X_{K,S}^{\chi}) \Lambda_{K,\fr}
\]
for any prime $\fr$ of $\Lambda_{K}$ with ${\rm ht}(\fr) \leq 1$. If $p \not\in \fr$, then the ring $\Lambda_{K, \fr}$ is regular, and hence  Corollary~\ref{cor:char-fitt} shows that 
\begin{align*}
{\rm Fitt}_{\Lambda_{K}}^{0}(X_{K,S}^{\chi}) \Lambda_{K, \fr} &=  L_{p,K}^{\chi}\Lambda_{K, \fr} 
\\
&= (L_{p,K}^{\chi}\Lambda_{K, \fr})^{**} 
\\
&= {\rm char}_{\Lambda_{K,\fr}}(X_{K,S}^{\chi} \otimes_{\Lambda_{K}}\Lambda_{K,\fr})  
\\
&= {\rm char}_{\Lambda_{K}}(X_{K,S}^{\chi}) \Lambda_{K,\fr}. 
\end{align*}
Suppose that  $p \in \fr$. 
In this case, the module $X_{K,S}^{\chi} \otimes_{\Lambda_{K}}\Lambda_{K,\fr}$ vanishes by Lemma~\ref{lemma:torsion} since $p$ is a $X_{K,S}^{\chi}$-regular element. 
Hence we have  
\[
{\rm Fitt}_{\Lambda_{K}}^{0}(X_{K,S}^{\chi}) \Lambda_{K, \fr} =  \Lambda_{K, \fr} = {\rm char}_{\Lambda_{K}}(X_{K,S}^{\chi}) \Lambda_{K,\fr}. 
\] 
\end{proof}

We note that $f_{K}^{-1}L_{p, K}^{\chi} \in \Lambda_{K}[1/p]$ by Proposition~\ref{prop:key_div} and Lemma~\ref{lem:rel2}. 

\begin{proposition}\label{prop:char}
For any field $K \in \Omega$, we have 
\[
{\rm char}_{\Lambda_{K}}(X_{K}^{\chi})\Lambda_{K}[1/p] = f_{K}^{-1}L_{p, K}^{\chi}  \Lambda_{K}[1/p]. 
\]
\end{proposition}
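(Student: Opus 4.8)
The plan is to compare the characteristic ideal of $X_K^\chi$ with that of the already-understood module $X_{K,S}^\chi$, using the exact sequence~\eqref{exact:diff}, and then invert $p$ to kill the discrepancy coming from $f_K$. First I would apply $\mathrm{char}_{\Lambda_K}$ to the short exact sequence
\[
0 \longrightarrow \bigoplus_{\fq \in S_{K} \setminus S} H^{1}(G_{k_{\fq}}, \bT_{K}) \longrightarrow X_{K,S}^{\chi} \longrightarrow X_{K}^{\chi} \longrightarrow 0.
\]
Since $\Lambda_K$ is a regular (hence normal, hence (G$_0$), (S$_2$)) ring, characteristic ideals are multiplicative on short exact sequences of torsion modules — this is the standard behaviour recorded via Lemma~\ref{lemma:fitt} and Corollary~\ref{cor:char-fitt} together with Lemma~\ref{lemma:bidualeq} (one checks the multiplicativity at height-$\le 1$ primes, where $\Lambda_K$ is a DVR, and then passes to bi-duals). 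This gives
\[
{\rm char}_{\Lambda_{K}}(X_{K,S}^{\chi}) = {\rm char}_{\Lambda_{K}}\!\Big(\bigoplus_{\fq \in S_{K} \setminus S} H^{1}(G_{k_{\fq}}, \bT_{K})\Big) \cdot {\rm char}_{\Lambda_{K}}(X_{K}^{\chi})
\]
as reflexive ideals; strictly speaking this identity should be read after inverting $p$ or interpreted via Lemma~\ref{lemma:bidualeq}, which is all we need. I should also note that $X_{K,S}^\chi$ is a torsion $\Lambda_K$-module (its characteristic ideal is generated by the regular element $L_{p,K}^\chi$ by Proposition~\ref{prop:char-wiles}), and that each $H^1(G_{k_\fq},\bT_K)$ is torsion by Proposition~\ref{prop:char-local}, so $X_K^\chi$ is torsion as well and all the characteristic ideals involved make sense.

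Next I would identify the middle factor. By Proposition~\ref{prop:char-local}, for each $\fq \in S_K \setminus S$ we have ${\rm char}_{\Lambda_{K}}(H^{1}(G_{k_{\fq}}, \bT_{K}))\Lambda_K[1/p] = f_{K,\fq}\Lambda_K[1/p]$, and the characteristic ideal of a finite direct sum is the product of the characteristic ideals, so
\[
{\rm char}_{\Lambda_{K}}\!\Big(\bigoplus_{\fq \in S_{K} \setminus S} H^{1}(G_{k_{\fq}}, \bT_{K})\Big)\Lambda_K[1/p] = \prod_{\fq \in S_K \setminus S} f_{K,\fq}\,\Lambda_K[1/p] = f_K\,\Lambda_K[1/p].
\]
Combining this with Proposition~\ref{prop:char-wiles}, which gives ${\rm char}_{\Lambda_{K}}(X_{K,S}^{\chi})\Lambda_K[1/p] = L_{p,K}^{\chi}\Lambda_K[1/p]$, and using that $f_K$ is a regular element of $\Lambda_K[1/p]$ so that we may cancel it, yields
\[
{\rm char}_{\Lambda_{K}}(X_{K}^{\chi})\Lambda_K[1/p] = f_K^{-1} L_{p,K}^{\chi}\,\Lambda_K[1/p],
\]
which is exactly the claim. (The quantity $f_K^{-1}L_{p,K}^\chi$ indeed lies in $\Lambda_K[1/p]$, as noted before the statement via Proposition~\ref{prop:key_div} and Lemma~\ref{lem:rel2}.)

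The main obstacle is the multiplicativity of characteristic ideals in the exact sequence~\eqref{exact:diff}: over the non-regular ring $\Lambda_K$ one cannot simply quote multiplicativity of Fitting ideals, so one must either argue at height-$\le 1$ primes (where $\Lambda_K$ localizes to a DVR and $\mathrm{char}$ is just length, hence additive in exact sequences) and then invoke reflexivity (Lemma~\ref{lemma:ref}) via Lemma~\ref{lemma:bidualeq} to conclude the equality of bi-duals — or, more cheaply, observe that after inverting $p$ the ring $\Lambda_K[1/p]$ is regular, so Corollary~\ref{cor:char-fitt} identifies $\mathrm{char}$ with $\mathrm{Fitt}^0$ and the usual multiplicativity of $\mathrm{Fitt}^0$ (up to bi-dual, which is automatic for reflexive, i.e.\ invertible, ideals over the regular ring $\Lambda_K[1/p]$) applies directly. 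Since the statement of the proposition is only an equality after $\otimes_{\Lambda_K}\Lambda_K[1/p]$, this second route is the clean one and avoids any delicate height-one bookkeeping; I would take it.
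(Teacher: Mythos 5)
Your second (``cheap'') route---inverting $p$ so that $\Lambda_{K}[1/p]$ is a finite product of principal ideal domains, where Corollary~\ref{cor:char-fitt} identifies ${\rm char}$ with the usual characteristic ideal, which is multiplicative on the sequence~\eqref{exact:diff}, and then invoking Propositions~\ref{prop:char-local} and~\ref{prop:char-wiles} and cancelling the regular element $f_{K}$---is exactly the paper's proof. The only slip is your opening assertion that $\Lambda_{K}$ itself is regular (contradicted by your own final paragraph), but this plays no role in the route you actually take.
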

\begin{proof}
Since $\Lambda_{K}[1/p]$ is the finite product of principal ideal domain, 
the notion of the characteristic ideal  coincides with the usual one by Corollary~\ref{cor:char-fitt}. 
In particular, the characteristic ideal is  additive in short exact sequences of finitely generated $\Lambda_{K}[1/p]$-modules. 
Hence the exact sequence~\eqref{exact:diff} shows that 
\[
{\rm char}_{\Lambda_{K}}(X_{K,S}^{\chi})\Lambda_{K}[1/p] 
= {\rm char}_{\Lambda_{K}}(X_{K}^{\chi}) \prod_{\fq \in S_{\rm ram}(K/k) } {\rm char}_{\Lambda_{K}}(H^{1}(G_{k_{\fq}}, \bT_{K}))\Lambda_{K}[1/p]. 
\]
Thus this proposition follows from Propositions~\ref{prop:char-local}~and~\ref{prop:char-wiles}. 
\end{proof}

We recall the conjecture concerning the structure of $Y_{K}^{\chi}$ proposed by Greenberg in \cite{Gre}.

\begin{conjecture}[Greenberg]\label{conj:greenberg}
The $\Lambda_{K}$-module $Y_{K}^{\chi}$ is pseudo-null for any field $K \in \Omega$.  
\end{conjecture}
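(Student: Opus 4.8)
Conjecture~\ref{conj:greenberg} is a form of Greenberg's conjecture, a deep and still open problem; we do not prove it but impose it as one of the hypotheses of Theorem~\ref{main}. Let me nonetheless describe the plan one would follow to establish it, together with the point at which the real difficulty arises. The first step is to recast the statement classically. The ring $\Lambda_{K}$ is finite and free over $\Lambda = \cO[[\Gal(k_{\infty}/k)]]$, which is a power series ring in one variable over the discrete valuation ring $\cO$ and hence of Krull dimension $2$, so a finitely generated torsion $\Lambda_{K}$-module is pseudo-null precisely when it is finite. Since $p$ is odd and $k$ is totally real, the field $k_{\chi}$ is totally real (the complex conjugations lie in $\ker(\chi)$ because $\chi$ is even) and $K$ is totally real (it is unramified at all archimedean places, as $S \supset S_{\infty}(k)$), so $k_{\chi}K(\mu_{p^{\infty}})$ is a CM field; by standard Iwasawa theory for the cyclotomic tower $Y_{K}^{\chi}$ is a finitely generated torsion $\Lambda_{K}$-module, and because $\chi$ is even $e_{\chi}$ isolates a direct summand of the plus part of the unramified Iwasawa module of that CM field. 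Thus Conjecture~\ref{conj:greenberg} is equivalent to the vanishing $\mu(Y_{K}^{\chi}) = \lambda(Y_{K}^{\chi}) = 0$, i.e.\ to the $\chi$-twisted form of the classical Greenberg conjecture for the totally real field cut out by $\chi$.

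I would then separate the two vanishing statements. The vanishing of $\mu$ holds whenever the fields involved are abelian over $\bQ$, by the theorem of Ferrero and Washington, and is a standard expectation in general (Iwasawa's $\mu$-invariant conjecture); where it is available one can also read it off from the main conjecture, since Proposition~\ref{prop:char-wiles} forces $\mu(X_{K,S}^{\chi}) = 0$ and hence, via \eqref{exact:diff} and \eqref{exact:2}, $\mu(X_{K}^{\chi}) = \mu(Y_{K}^{\chi}/H_{K,p}^{\chi}) = 0$, the remaining contribution $\mu(H_{K,p}^{\chi})$ being handled separately. The substantive point is the vanishing of $\lambda(Y_{K}^{\chi})$, equivalently the assertion that the $p$-part of the class number of $k_{\chi}K(\mu_{p^{n}})$ stabilises as $n \to \infty$. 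One would attack this through one of the known sufficient criteria --- Greenberg's stabilisation criterion, or the descent criteria of Kraft--Schoof and of Ozaki--Taya --- or by producing enough cyclotomic units to annihilate the class group along the tower, possibly only for specific or generic $(k,\chi)$.

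The main obstacle is precisely this last point. In contrast with $\mu = 0$, the vanishing of $\lambda$ is not forced by any main-conjecture identity: the Iwasawa main conjecture pins down the characteristic ideal of $X_{K,S}^{\chi}$, but through \eqref{exact:diff} and \eqref{exact:2} that ideal is already entirely accounted for by $L_{p,K}^{\chi}$ and the local terms $H^{1}(G_{k_{\fq}},\bT_{K})$, so it yields no information on whether the ``totally real'' module $Y_{K}^{\chi}$ is itself finite. Genuinely arithmetic input --- about the growth of $p$-class groups in the cyclotomic tower, or the structure of the maximal unramified pro-$p$ extension of $k_{\chi}K(\mu_{p^{\infty}})$ --- appears to be required, which is why the statement is retained as a hypothesis rather than proved.
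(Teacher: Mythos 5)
You are right to refuse to ``prove'' this statement: it is Greenberg's conjecture, which the paper likewise does not prove but simply records (following \cite{Gre}) and imposes as a hypothesis in Theorem~\ref{thm:main}, so your stance matches the paper exactly, and your reduction to finiteness of $Y_{K}^{\chi}$ (i.e.\ $\mu=\lambda=0$ for the even $\chi$-part) is the standard reformulation. One small caveat: reading $\mu(X_{K,S}^{\chi})=0$ off Proposition~\ref{prop:char-wiles} is circular, since that proposition already assumes $X_{k}^{\chi}$ is $p$-torsion-free, which is the $\mu=0$ hypothesis of the paper.
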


\subsection{On the structure of the module of Euler systems for $\bG_{m}$}

Recall that $r = [k \colon \bQ]$. 
Suppose that 
\[
H^{0}(G_{k_{p}},T/\fm T) = H^{2}(G_{k_{p}},T/\fm T) = 0. 
\]
Then, for any field $K \in \Omega$, the complex ${\bf R}\Gamma(G_{k_{p}}, \bT_{K})$ has perfect amplitude in $[1, 1]$.  
Since the Euler--Poincare characteristic of ${\bf R}\Gamma(G_{k_{p}}, \bT_{K})$ is $r$, we have an isomorphism 
\[
H^{1}(G_{k_{p}}, \bT_{K}) \cong \Lambda_{K}^{r}
\] 
such that the following diagram commutes for any field $L \in \Omega$ with $L \subset K$: 
\[
\xymatrix{
H^{1}(G_{k_{p}}, \bT_{K}) \ar[r]^-{\cong} \ar[d] & \Lambda_{K}^{r} \ar[d]
\\
H^{1}(G_{k_{p}}, \bT_{L}) \ar[r]^-{\cong}  & \Lambda_{L}^{r}, 
}
\]
where the left vertical arrow is induced by the canonical homomorphism $\bT_{K} \longrightarrow \bT_{L}$ and the right vertical arrow is  the canonical projection.  
The localization map at $p$ induces a homomorphism 
\[
{\bigcap}^{r}_{\Lambda_{K}}H^{1}(G_{k,S_{K}}, \bT_{K}) \longrightarrow {\bigcap}^{r}_{\Lambda_{K}}H^{1}(G_{k_{p}}, \bT_{K}) \cong {\bigcap}^{r}_{\Lambda_{K}}\Lambda_{K}^{r} = \Lambda_{K}, 
\]
and we obtain a homomorphism 
\[
{\rm ES}_{r}(T) = {\rm ES}\left(\left\{{\bigcap}^{r}_{\Lambda_{K}}H^{1}(G_{k,S_{K}}, \bT_{K})\right\}_{K \in \Omega} \right) \longrightarrow  {\rm ES}(\{\Lambda_{K}\}_{K \in \Omega}\}) = {\rm ES}_{0}(T). 
\]

\begin{proposition}[{\cite[Proposition~2.10]{hres}}]\label{prop:image}
The homomorphism ${\rm ES}_{r}(T) \longrightarrow {\rm ES}_{0}(T)$ is injective and we have 
\[
{\rm im}\left({\rm ES}_{r}(T) \longrightarrow {\rm ES}_{0}(T)\right) = {\rm ES}_{0}(T) \cap \prod_{K \in \Omega}{\rm char}_{\Lambda_{K}}(C_{K}). 
\]
\end{proposition}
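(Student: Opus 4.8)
The plan is to combine the exact sequence~\eqref{exact:1} with the characteristic-ideal machinery of \S4, treating everything over the localized ring $\Lambda_K[1/p]$ where the rings are products of principal ideal domains. First I would use the hypothesis $H^0(G_{k_p},T/\fm T) = H^2(G_{k_p},T/\fm T) = 0$ to fix, compatibly in $K$, the identification $H^1(G_{k_p},\bT_K) \cong \Lambda_K^r$; this turns the localization-at-$p$ map on exterior bi-duals into a genuine $\Lambda_K$-linear map ${\bigcap}^r_{\Lambda_K} H^1(G_{k,S_K},\bT_K) \longrightarrow \Lambda_K$. The compatibility of these identifications under $K \supset L$ (the commuting square in the text) is exactly what makes the induced map ${\rm ES}_r(T) \longrightarrow {\rm ES}_0(T)$ well-defined, so injectivity and the image computation can be checked componentwise.

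For injectivity: since $H^1(G_{k,S_K},\bT_K) \hooklongrightarrow H^1(G_{k_p},\bT_K) \cong \Lambda_K^r$ is injective (weak Leopoldt, as recorded before~\eqref{exact:fundamental}) and $\Lambda_K^r$ is a torsion-free reflexive module over a ring satisfying (G$_0$) and (S$_1$) — indeed (S$_2$), being a power series ring over $\cO$ — Lemma~\ref{lemma:bidual-inj} shows that the induced map on $r$-th exterior bi-duals ${\bigcap}^r_{\Lambda_K} H^1(G_{k,S_K},\bT_K) \longrightarrow {\bigcap}^r_{\Lambda_K}\Lambda_K^r = \Lambda_K$ is injective at each level $K$. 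An element of ${\rm ES}_r(T)$ mapping to zero therefore has all components zero, giving injectivity of ${\rm ES}_r(T) \longrightarrow {\rm ES}_0(T)$.

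For the image: I would show that the image of ${\bigcap}^r_{\Lambda_K} H^1(G_{k,S_K},\bT_K)$ inside $\Lambda_K$ is precisely ${\rm char}_{\Lambda_K}(C_K)$, for then an element $(c_K)_K \in {\rm ES}_0(T)$ lies in the image of ${\rm ES}_r(T)$ if and only if each $c_K \in {\rm char}_{\Lambda_K}(C_K)$, which is the asserted description. The exact sequence~\eqref{exact:1}, namely $0 \to H^1(G_{k,S_K},\bT_K) \to \Lambda_K^r \to C_K \to 0$, is an exact sequence presenting $C_K$ by a free module of rank $r$ with kernel $H^1(G_{k,S_K},\bT_K)$; after checking that $H^1(G_{k,S_K},\bT_K)$ has rank $r$ (so that the presentation is of the shape $0 \to N \to R^s \to M \to 0$ with $s = r$ used in the definition of ${\rm char}$), the definition of the characteristic ideal gives directly that ${\rm char}_{\Lambda_K}(C_K)$ is the image of ${\bigcap}^r_{\Lambda_K} H^1(G_{k,S_K},\bT_K) \to {\bigcap}^r_{\Lambda_K}\Lambda_K^r = \Lambda_K$. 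One then has to verify that the diagram relating the transition maps on $\{{\bigcap}^r_{\Lambda_K} H^1(G_{k,S_K},\bT_K)\}$ (Lemma~\ref{lem:inverse-system}) to the Euler-system transition maps on $\{\Lambda_K\}_{K}$ is compatible, so that the image of ${\rm ES}_r(T)$ equals ${\rm ES}_0(T) \cap \prod_{K} {\rm char}_{\Lambda_K}(C_K)$ rather than merely being contained in it; surjectivity onto this intersection amounts to lifting, level by level and compatibly, an element of ${\rm char}_{\Lambda_K}(C_K)$ to ${\bigcap}^r_{\Lambda_K} H^1(G_{k,S_K},\bT_K)$, using that this bi-dual surjects onto its image ideal.

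The main obstacle I expect is the last point: producing a \emph{compatible} system of lifts, i.e.\ showing the natural map ${\rm ES}_r(T) = {\rm ES}(\{{\bigcap}^r_{\Lambda_K} H^1(G_{k,S_K},\bT_K)\}) \to {\rm ES}(\{{\rm char}_{\Lambda_K}(C_K)\})$ is surjective. Levelwise surjectivity of ${\bigcap}^r_{\Lambda_K} H^1(G_{k,S_K},\bT_K) \twoheadrightarrow {\rm char}_{\Lambda_K}(C_K)$ is immediate from the definition, but patching these into an inverse-compatible family requires a Mittag-Leffler / exactness-of-inverse-limit argument, or else a direct check that the relevant transition maps are surjective with well-behaved kernels; this is presumably where the reference \cite[Proposition~2.10]{hres} does the real work, and I would follow that argument, verifying that the hypotheses on $T$ in force here (in particular the vanishing of $H^0$ and $H^2$ locally at $p$ and coprimality of $p$ to the class number) suffice to run it.
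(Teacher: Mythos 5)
Your treatment of injectivity and of the levelwise image computation is exactly the paper's argument: weak Leopoldt gives the injection $H^{1}(G_{k,S_{K}},\bT_{K})\hooklongrightarrow H^{1}(G_{k_{p}},\bT_{K})\cong\Lambda_{K}^{r}$, Lemma~\ref{lemma:bidual-inj} then gives injectivity on $r$-th exterior bi-duals at each level, and since \eqref{exact:1} presents $C_{K}$ by the free module $\Lambda_{K}^{r}$, the definition of the characteristic ideal identifies the image of ${\bigcap}^{r}_{\Lambda_{K}}H^{1}(G_{k,S_{K}},\bT_{K})$ in $\Lambda_{K}$ with ${\rm char}_{\Lambda_{K}}(C_{K})$. (A small point: there is nothing to check about the rank of $H^{1}(G_{k,S_{K}},\bT_{K})$; in the definition of ${\rm char}_{R}(M)$ the exponent of the exterior bi-dual is the rank $s$ of the free module in the presentation, and here $s=r$ simply because $H^{1}(G_{k_{p}},\bT_{K})\cong\Lambda_{K}^{r}$.)

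The gap is in your final paragraph, where you treat the equality of the image with ${\rm ES}_{0}(T)\cap\prod_{K}{\rm char}_{\Lambda_{K}}(C_{K})$ as a patching problem requiring a Mittag--Leffler or surjective-transition-map argument and defer it to the reference. No such argument is needed (nor would one be available, as the transition maps of the inverse system need not be surjective): the levelwise injectivity you have already established does all the work. Given $(c_{K})_{K}\in{\rm ES}_{0}(T)$ with $c_{K}\in{\rm char}_{\Lambda_{K}}(C_{K})$ for every $K$, each $c_{K}$ has a \emph{unique} preimage $\tilde{c}_{K}\in{\bigcap}^{r}_{\Lambda_{K}}H^{1}(G_{k,S_{K}},\bT_{K})$, and the Euler system relation for the $\tilde{c}_{K}$ is automatic: by the commutative squares relating the transition maps of Lemma~\ref{lem:inverse-system} to the canonical projections $\Lambda_{K}\longrightarrow\Lambda_{L}$ (the compatibility of the identifications $H^{1}(G_{k_{p}},\bT_{K})\cong\Lambda_{K}^{r}$ that you already invoked), both the image of $\tilde{c}_{K}$ under the transition map and $\bigl(\prod_{\fq\in S_{K}\setminus S_{L}}P_{\fq}({\rm Fr}_{\fq}^{-1})\bigr)\tilde{c}_{L}$ are sent to $\bigl(\prod_{\fq\in S_{K}\setminus S_{L}}P_{\fq}({\rm Fr}_{\fq}^{-1})\bigr)c_{L}$ in $\Lambda_{L}$, so they coincide by injectivity of ${\bigcap}^{r}_{\Lambda_{L}}H^{1}(G_{k,S_{L}},\bT_{L})\longrightarrow\Lambda_{L}$. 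Hence $(\tilde{c}_{K})_{K}$ lies in ${\rm ES}_{r}(T)$ and maps to $(c_{K})_{K}$, giving the full intersection as the image; this is why the paper can state the result as an immediate consequence of \eqref{exact:1}, Lemma~\ref{lemma:bidual-inj}, and the definition of the characteristic ideal. (The remark in your opening paragraph about working over $\Lambda_{K}[1/p]$ is also unnecessary here; inverting $p$ plays no role in this proposition.)
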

\begin{proof}
Let $K$ be a field. 
Recall that, by \eqref{exact:1}, we have an exact sequence of $\Lambda_{K}$-modules
\[
0 \longrightarrow H^{1}(G_{k,S_{K}},\bT_{K}) \longrightarrow \Lambda_{K}^{r} \longrightarrow C_{K} \longrightarrow 0. 
\] 
Hence Lemma~\ref{lemma:bidual-inj} shows that the homomorphism ${\rm ES}_{r}(T) \longrightarrow {\rm ES}_{0}(T)$ is injective. 
Furthermore, the definition of the characteristic ideal shows that 
\[
{\rm im}\left({\bigcap}^{r}_{\Lambda_{K}}H^{1}(G_{k,S_{K}}, \bT_{K}) \longrightarrow  {\bigcap}^{r}_{\Lambda_{K}}\Lambda_{K}^{r} = \Lambda_{K}\right) = {\rm char}_{\Lambda_{K}}(C_{K}), 
\]
which completes the proof. 
\end{proof}

%\begin{corollary}
%Suppose that 
%\begin{itemize}
%\item $H^{0}(G_{k_{\fp}},T/\fm T) = H^{2}(G_{k_{\fp}},T/\fm T) = 0$
%for each prime $\fp \in S_{p}(k)$, and 
%\item 
%\end{itemize}
%Then there is a unique Euler system $c^{\rm DR} = \{c_{K}^{\rm DR}\}_{K \in \Omega} \in {\rm ES}_{r}(T)$ of rank $r$ such that the image of $c^{\rm DR}$ under the injection ${\rm ES}_{r}(T) \longhookrightarrow \cE(T)$ (defined in Proposition~\ref{prop:main}) is $\{L_{p,K}^{\chi}\}_{K \in \Omega}$. 
%\end{corollary}

\begin{theorem}\label{thm:main}
Suppose that 
\begin{itemize}
\item both $H^{0}(G_{k_{p}},T/\fm T)$ and $H^{2}(G_{k_{p}},T/\fm T)$ vanish, 
\item the module $X_{k}^{\chi}$ is a finitely generated $\bZ_{p}$-module, 
\item Greenberg conjecture (Conjecture~\ref{conj:greenberg}) holds true. 
\end{itemize}
Then the $\cO[[\Gal(k_{\infty}\cK/k)]]$-module ${\rm ES}_{r}(T)$ is free of rank $1$. 
Furthermore, there exists a basis $\{c_{K}\}_{K \in \Omega} \in {\rm ES}_{r}(T)$ such that 
its image under the injection ${\rm ES}_{r}(T) \longhookrightarrow {\rm ES}_{0}(T)$ is $\{L_{p,K}^{\chi}\}_{K \in \Omega}$. 
\end{theorem}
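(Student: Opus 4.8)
The plan is to combine the divisibility results of Sections 3--4 with the structural input from Greenberg's conjecture. By Proposition~\ref{prop:image}, it suffices to understand the $\cO[[\Gal(\cK/k)]]$-module
\[
{\rm ES}_{0}(T) \cap \prod_{K \in \Omega}{\rm char}_{\Lambda_{K}}(C_{K}),
\]
so the first step is to identify the characteristic ideal ${\rm char}_{\Lambda_{K}}(C_{K})$ precisely, at least after inverting $p$. From the exact sequence~\eqref{exact:2} and additivity of characteristic ideals on short exact sequences (valid over the noetherian ring $\Lambda_{K}$ satisfying (G$_0$), (S$_2$); cf.\ Corollary~\ref{cor:pseudo} applied to the pseudo-null kernel/cokernel pieces), one gets
\[
{\rm char}_{\Lambda_{K}}(C_{K}) = {\rm char}_{\Lambda_{K}}(X_{K}^{\chi}) \cdot {\rm char}_{\Lambda_{K}}(Y_{K}^{\chi}/H_{K,p}^{\chi})^{-1},
\]
and here is exactly where Greenberg's conjecture enters: it forces $Y_{K}^{\chi}$ to be pseudo-null, hence $Y_{K}^{\chi}/H_{K,p}^{\chi}$ is pseudo-null, so by Corollary~\ref{cor:pseudo} its characteristic ideal is the unit ideal and ${\rm char}_{\Lambda_{K}}(C_{K}) = {\rm char}_{\Lambda_{K}}(X_{K}^{\chi})$. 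Combining this with Proposition~\ref{prop:char} yields
\[
{\rm char}_{\Lambda_{K}}(C_{K})\Lambda_{K}[1/p] = f_{K}^{-1}L_{p,K}^{\chi}\,\Lambda_{K}[1/p].
\]

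The second step is to pin down ${\rm char}_{\Lambda_{K}}(C_{K})$ integrally, not just after inverting $p$. Under the hypothesis $H^{0}(G_{k_p},T/\fm T)=H^{2}(G_{k_p},T/\fm T)=0$ we have the identification $H^{1}(G_{k_p},\bT_{K})\cong\Lambda_{K}^{r}$, and the exact sequence~\eqref{exact:1} presents $C_{K}$ with projective dimension $\le 1$ provided $H^{1}(G_{k,S_K},\bT_K)$ is $\Lambda_{K}$-projective — which should follow from the structural facts about $H^{1}(G_{k,S_K},\bT_K)$ together with $X_{k}^{\chi}$ being $p$-torsion-free (via Lemma~\ref{lemma:p-torsion} and the exact sequences~\eqref{exact:fundamental}, \eqref{exact:diff}, which show $X_{K,S}^{\chi}$ and the relevant modules have no $p$-torsion, so the $C_K$ behave well). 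With projective dimension $\le 1$, Lemma~\ref{lemma:fitt}(ii) gives ${\rm Fitt}^{0}_{\Lambda_K}(C_K) = {\rm char}_{\Lambda_K}(C_K)$, and this Fitting ideal is principal, generated by the element that becomes $f_{K}^{-1}L_{p,K}^{\chi}$ over $\Lambda_{K}[1/p]$; since $\Lambda_{K}[1/p]\cap\Lambda_{K}$-type arguments (or directly the fact that $f_{K}^{-1}L_{p,K}^{\chi}$ is already integral by Proposition~\ref{prop:key_div} and Lemma~\ref{lem:rel2}) force the generator to be $f_{K}^{-1}L_{p,K}^{\chi}$ up to a unit, we conclude
\[
{\rm char}_{\Lambda_{K}}(C_{K}) = f_{K}^{-1}L_{p,K}^{\chi}\,\Lambda_{K}.
\]

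The third step assembles the free-module statement. Using the Corollary to Proposition~\ref{prop:key_div}, the map $(\lambda_K)_K\mapsto(\lambda_K f_K)_K$ identifies $\varprojlim_{K}\Lambda_K[1/p]$ with ${\rm ES}(\{\Lambda_K[1/p]\}_K)$, and ${\rm ES}_{0}(T)$ sits inside the latter. The image of ${\rm ES}_{r}(T)$ is then the set of $(c_K)_K\in{\rm ES}_{0}(T)$ with $c_K\in f_K^{-1}L_{p,K}^{\chi}\Lambda_K$ for all $K$. Since $\{L_{p,K}^{\chi}\}_{K}\in{\rm ES}_{0}(T)$ by Lemma~\ref{lem:rel2} and $L_{p,K}^{\chi} = f_K\cdot(f_K^{-1}L_{p,K}^{\chi})\in f_K\cdot(f_K^{-1}L_{p,K}^{\chi}\Lambda_K)$, this system lies in the image; conversely any element of the image is of the form $(\lambda_K f_K^{-1}L_{p,K}^{\chi})_K$ with $(\lambda_K f_K^{-1}L_{p,K}^{\chi})_K\in{\rm ES}_{0}(T)$, and one checks that the compatibility in ${\rm ES}_{0}(T)$ combined with regularity of the $f_K$ and $L_{p,K}^{\chi}$ forces $(\lambda_K)_K$ to be a single element $\lambda\in\varprojlim_K\Lambda_K\cong\cO[[\Gal(\cK/k)]]$ (here one uses that $\{L_{p,K}^{\chi}\}_K$ is itself an Euler system, so dividing by it stays coherent). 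Thus ${\rm ES}_{r}(T)$ is free of rank $1$ over $\cO[[\Gal(\cK/k)]]$ with basis the preimage of $\{L_{p,K}^{\chi}\}_{K\in\Omega}$.

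The main obstacle I anticipate is the second step: upgrading the characteristic-ideal computation from $\Lambda_{K}[1/p]$ to $\Lambda_{K}$ integrally, and in particular establishing that $C_K$ has projective dimension at most $1$ so that its characteristic ideal is the principal Fitting ideal. This requires controlling the $p$-torsion and homological dimension of $H^{1}(G_{k,S_K},\bT_K)$ uniformly in $K$, which is where the $p$-torsion-freeness of $X_{k}^{\chi}$ (propagated by Lemma~\ref{lemma:p-torsion}) does the real work; the $p$-adic $L$-function computations of Kurihara (Proposition~\ref{prop:char-wiles}) and the local computation (Proposition~\ref{prop:char-local}) then feed in. Verifying that the limit of the $\lambda_K$'s genuinely lands in $\cO[[\Gal(\cK/k)]]$ — i.e.\ that no denominators are introduced in the inverse limit — is the other delicate bookkeeping point, handled precisely because $\{L_{p,K}^{\chi}\}_K\in{\rm ES}_{0}(T)$ makes the division compatible with transition maps.
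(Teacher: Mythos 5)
Your overall skeleton does match the paper's: reduce via Proposition~\ref{prop:image}, use Greenberg's conjecture with Corollary~\ref{cor:pseudo} to replace $C_{K}$ by $X_{K}^{\chi}$, compute characteristic ideals after inverting $p$ (Proposition~\ref{prop:char}), and use Proposition~\ref{prop:key_div} to control the Euler-factor denominators. The problem is your second step. The claim that $H^{1}(G_{k,S_{K}},\bT_{K})$ is $\Lambda_{K}$-projective (so that $C_{K}$ has projective dimension at most $1$ and ${\rm char}_{\Lambda_{K}}(C_{K})={\rm Fitt}^{0}_{\Lambda_{K}}(C_{K})$ via Lemma~\ref{lemma:fitt}) is asserted, not proved; it does not follow from $X_{k}^{\chi}$ being $p$-torsion-free, and the paper never establishes --- nor needs --- any integral identification of ${\rm char}_{\Lambda_{K}}(C_{K})$. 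Moreover, your identification of the would-be integral generator rests on the assertion that $f_{K}^{-1}L_{p,K}^{\chi}$ is ``already integral by Proposition~\ref{prop:key_div} and Lemma~\ref{lem:rel2}''; those results only give $f_{K}^{-1}L_{p,K}^{\chi}\in\Lambda_{K}[1/p]$ (the paper says exactly this), and an equality of ideals of $\Lambda_{K}[1/p]$ determines a generator only up to units of $\Lambda_{K}[1/p]$, so it cannot pin down ${\rm char}_{\Lambda_{K}}(C_{K})$ inside $\Lambda_{K}$. So the equality ${\rm char}_{\Lambda_{K}}(C_{K})=f_{K}^{-1}L_{p,K}^{\chi}\Lambda_{K}$ on which your third step relies is unsubstantiated.

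The same issue resurfaces at the end: writing elements of the image as multiples of $f_{K}^{-1}L_{p,K}^{\chi}$ and invoking the Euler relations with regularity does give \emph{norm-coherence} of the ratios, but coherence is not the delicate point --- integrality is. After Proposition~\ref{prop:key_div} one only knows $c_{K}\in\Lambda_{K}\cap L_{p,K}^{\chi}\Lambda_{K}[1/p]$, and the heart of the paper's argument is that this intersection equals $L_{p,K}^{\chi}\Lambda_{K}$: this is Lemma~\ref{lemma:torsion} applied to the torsion module $X_{K,S}^{\chi}$, which is $p$-torsion-free by Lemma~\ref{lemma:p-torsion} (this is precisely where the hypothesis on $X_{k}^{\chi}$ does its work) and whose characteristic ideal is $L_{p,K}^{\chi}\Lambda_{K}$ by Proposition~\ref{prop:char-wiles}. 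You instead attribute the absence of denominators to $\{L_{p,K}^{\chi}\}_{K}$ being an Euler system; that only yields $\pi_{K,L}(\lambda_{K})=\lambda_{L}$ for the ratios, not $\lambda_{K}\in\Lambda_{K}$, and you explicitly leave this as an ``anticipated obstacle'' rather than resolving it. Two minor further points: multiplicativity of ${\rm char}$ on short exact sequences is only available in the paper's framework over $\Lambda_{K}[1/p]$, so in Step 1 you should simply apply Corollary~\ref{cor:pseudo} to the pseudo-isomorphism $C_{K}\longrightarrow X_{K}^{\chi}$ (which you also cite); and $f_{K}$ itself lies only in $\Lambda_{K}[1/p]$, not $\Lambda_{K}$, so expressions like $f_{K}\cdot(f_{K}^{-1}L_{p,K}^{\chi}\Lambda_{K})$ need care.
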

\begin{proof}
Since we assume  Greenberg conjecture, 
by the exact sequence~\eqref{exact:2} and Corollary~\ref{cor:pseudo}, 
we have 
\[
{\rm char}_{\Lambda_{K}}(X_{K}^{\chi}) = {\rm char}_{\Lambda_{K}}(C_{K}) 
\] 
for any field $K \in \Omega$. 
Hence, by Proposition~\ref{prop:image}, it suffices to show that the homomorphism 
\begin{align}\label{hom2}
\cO[[\Gal(k_{\infty}\cK/k)]] \longrightarrow 
{\rm ES}_{0}(T) \cap \prod_{K \in \Omega}{\rm char}_{\Lambda_{K}}(X_{K}^{\chi}); \, \lambda \mapsto \{\lambda L_{p,K}^{\chi}\}_{K \in \Omega}
\end{align}
is an isomorphism. 
Since $L_{p,K}^{\chi}$ is a regular element of $\Lambda_{K}$ for any field $K \in \Omega$, the homomorphism~\eqref{hom2} is injective. 

To show the surjectivity of the homomorphism~\eqref{hom2}, take an Euler system 
\[
(c_{K})_{K \in \Omega} \in {\rm ES}_{0}(T) \cap \prod_{K \in \Omega}{\rm char}_{\Lambda_{K}}(X_{K}^{\chi}).
\] 
Let $K \in \Omega$ be a field. 
Proposition~\ref{prop:char} shows that there is an element $\alpha_{K} \in \Lambda_{K}[1/p]$ such that 
\[
c_{K} = \alpha_{K}f_{K}^{-1}L_{p,K}^{\chi}. 
\]
Lemmas~\ref{lemma:euler-rel}~and~\ref{lem:rel2} imply that $\{f_{K}^{-1}L_{p,K}^{\chi}\}_{K \in \Omega}$ are norm-coherent. 
Hence for any field $L \in \Omega$ with $L \subset K$, we have 
\begin{align*}
\pi_{K,L}(\alpha_{K})f_{L}^{-1}L_{p,L}^{\chi} 
%&= \pi_{K,L}(\alpha_{K}f_{K}^{-1}L_{p,K}^{\chi}) 
%\\
&= \pi_{K,L}(c_{K}) 
\\
&= \left(\prod_{\fq \in S_{\rm ram}(K/k) \setminus S_{\rm ram}(L/k) }P_{\fq}({\rm Fr}_{\fq}^{-1})\right)c_{L} 
\\
&= \left(\prod_{\fq \in S_{\rm ram}(K/k) \setminus S_{\rm ram}(L/k)}P_{\fq}({\rm Fr}_{\fq}^{-1})\right)\alpha_{L}f_{L}^{-1}L_{p,L}^{\chi}. 
\end{align*}
Since $f_{K}^{-1}L_{p,K}^{\chi}$ is a regular element of $\Lambda_{K}[1/p]$, we conclude that $(\alpha_{K})_{K \in \Omega} \in {\rm ES}(\{\Lambda_{K}[1/p]\}_{K \in \Omega})$. 
%\[
%(\alpha_{K})_{K \in \Omega} \in {\rm ES}(\{\Lambda_{K}[1/p]\}_{K \in \Omega}). 
%\]
Therefore, Proposition~\ref{prop:key_div} shows that 
\[
\alpha_{K} \in f_{K} \Lambda_{K}[1/p], 
\]
and we have 
\[
c_{K} \in \Lambda_{K} \cap L_{p,K}^{\chi}  \Lambda_{K}[1/p].  
%= \Lambda_{K} \cap {\rm char}_{{\Lambda}_{K}}(X_{K,S}^{\chi}) \Lambda_{K}[1/p]. 
\]
We note that $ {\rm char}_{{\Lambda}_{K}}(X_{K,S}^{\chi}) = L_{p,K}^{\chi}  \Lambda_{K}$ by Proposition~\ref{prop:char-wiles}.  
The module $X_{K,S}^{\chi}$ is $p$-torsion-free  by Lemma~\ref{lemma:p-torsion} since we assume that $X_{k}^{\chi}$ is a finitely generated $\bZ_{p}$-module. 
Hence Lemma~\ref{lemma:torsion} implies that 
\[
\Lambda_{K} \cap L_{p,K}^{\chi}  \Lambda_{K}[1/p] = \bigcup_{m>0}p^{-m}\left(p^{m}\Lambda_{K} \cap L_{p,K}^{\chi}  \Lambda_{K} \right) = L_{p,K}^{\chi}  \Lambda_{K}. 
\]
This shows that $c_{K}  \in L_{p, K}  \Lambda_{K}$, and hence the homomorphism~\eqref{hom2} is surjective.  
 \end{proof}

\subsection{Vertical determinantal systems }

For each integer $n \geq 0$, let $k_{n}$ denote the field $k_{n} \subset k_{\infty}$ satisfying $[k_{n} \colon k] = p^{n}$. 
For any field $K\in \Omega$, put 
\[
K_{n} := k_{n}K \, \textrm{ and  } \, T_{K_{n}} := \mathrm{Ind}_{G_{k}}^{G_{k_{n}K}}(T). 
\]

\begin{definition}[{\cite[Definition~2.9]{sbA}}]
For any field $K\in \Omega$ and any integer $n \geq 0$, we set 
\begin{align*}
{\bf R}\Gamma_{c}(G_{k, S_{K_{n}}}, T_{K_{n}}) := {\bf R}\Hom_{\cO}({\bf R}&\Gamma(G_{k, S_{K_{n}}}, T_{K_{n}}), \cO)[-3] 
\\
&\oplus \left(\bigoplus_{v \in S_{\infty}(k)} H^{0}(G_{k_{v}}, T_{K_{n}})\right)[-1]. 
\end{align*}
\end{definition}

\begin{definition}
Let $K\in \Omega$ be a field and let $n \geq 0$ be an integer. 
For any $\bZ_{p}[\Gal(k_{n}K/k)]$-module $M$, we define a new $\bZ_{p}[\Gal(k_{n}K/k)]$-module $M^{\iota}$ by 
\[
M^{\iota} := M \otimes_{\bZ_{p}[\Gal(k_{n}K/k)], \iota} \bZ_{p}[\Gal(k_{n}K/k)]. 
\]
Here $\iota \colon \bZ_{p}[\Gal(k_{n}K/k)] \longrightarrow \bZ_{p}[\Gal(k_{n}K/k)]$ is the homomorphism defined by $g \mapsto g^{-1}$ for $g \in \Gal(k_{n}K/k)$. 
\end{definition}

As explained in \cite[Definition~2.9]{sbA}, we have the canonical homomorphism 
\[
{\det}^{-1}({\bf R}\Gamma_{c}(G_{k, S_{K_{m}}}, T_{K_{m}}))^{\iota} \longrightarrow {\det}^{-1}({\bf R}\Gamma_{c}(G_{k, S_{L_{n}}}, T_{L_{n}}))^{\iota}
\]
for any field $L \in \Omega$ with $L \subset K$ and integer $m$ with $n \leq m$. 
We then define the $\cO[[\Gal(k_{\infty}\cK/k)]]$-module of vertical determinantal systems by 
\[
{\rm VS}(T) := \varprojlim_{K \in \Omega, \, n \geq 0} {\det}^{-1}({\bf R}\Gamma_{c}(G_{k, S_{K_{n}}}, T_{K_{n}}))^{\iota}. 
\]

\begin{proposition}[{\cite[Proposition~2.10]{sbA}}]\label{prop:VS}
The $\cO[[\Gal(k_{\infty}\cK/k)]]$-module  ${\rm VS}(T)$ is free of rank $1$. 
\end{proposition}

\begin{remark}
As Burns and Sano mentioned in \cite[Remark~2.11]{sbA}, the equivariant Tamagawa number conjecture predicts the existence of  a unique basis of ${\rm VS}(T)$   such that  its image  under a period-regulator isomorphism is 
the leading term of  an $L$-series. 
\end{remark}

In \cite[Theorem~2.18]{sbA}, Burns and Sano showed that there is a natural homomorphism 
\[
{\rm VS}(T) \longrightarrow {\rm ES}_{r}(T). 
\]
According to the equivariant Tamagawa number conjecture, Euler systems in the image of this homomorphism should be related to  $L$-values. 
Hence it is important  to understand the size of the image of this homomorphism.

\begin{theorem}\label{thm:main2}
Suppose that 
\begin{itemize}
\item both $H^{0}(G_{k_{p}},T/\fm T)$ and $H^{2}(G_{k_{p}},T/\fm T)$ vanish, 
\item the module $X_{k}^{\chi}$ is a finitely generated $\bZ_{p}$-module, 
\item  Greenberg conjecture (Conjecture~\ref{conj:greenberg}) holds true. 
\end{itemize}
Then the canonical homomorphism ${\rm VS}(T) \longrightarrow {\rm ES}_{r}(T)$ is an isomorphism. 
\end{theorem}

Before we give a proof of Theorem \ref{thm:main2}, we introduce the modules of Kolyvagin and Stark systems.  

Recall that $\overline{T}$ denotes the residual representation of $T$ and $r = [k\colon \bQ]$.  
Let ${\rm KS}_{r}(\overline{T})$ and ${\rm SS}_{r}(\overline{T})$ denote the modules of Kolyvagin and Stark systems of rank $r$ associated with the canonical Selmer structure on $\overline{T}$, respectively (see \cite[Definition~3.2.1]{MRkoly} for the definition of the canonical Selmer structure and \cite[Definitions~3.1~and~4.1]{sbA} for the definition of Kolyvagin and Starks systems). 
In the paper \cite{sbA}, Burns and Sano proved the following 

\begin{proposition}[{\cite[Theorems~3.12~and~4.16]{sbA}}]\label{prop:sbA}
We have the following commutative diagram:
\begin{align*}
\xymatrix{
{\rm VS}(T) \ar[r] \ar[d] & {\rm ES}_{r}(T) \ar[r]  & {\rm KC}_{r}(\overline{T})
\\
{\rm SS}_{r}(\overline{T}) \ar[rr]  & & {\rm KS}_{r}(\overline{T}). \ar@{^{(}->}[u]
}
\end{align*}
Here ${\rm KC}_{r}(\overline{T})$ denotes the module of Kolyvagin collections of rank $r$ (see \cite[Definition~4.11]{sbA}). 
Furthermore, if the module $H^{2}(G_{k_{p}}, \overline{T})$ vanishes, then the homomorphism ${\rm VS}(T) \longrightarrow {\rm SS}_{r}(\overline{T})$ is surjective. 
%In particular, the dimension of $\cO/\fm$-vector space ${\rm SS}_{r}(\overline{T})$ is $1$. 
\end{proposition}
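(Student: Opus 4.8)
The plan is to realise the whole diagram as instances of a single construction — passing from the determinant module of a perfect complex to the exterior bi-dual of its degree-one cohomology — carried out both at the Iwasawa level of $\bT$ and at the residual level of $\overline{T}$, with commutativity deduced from the functoriality of this construction under base change and under the finite-singular comparison isomorphisms. (Everything here is due to Burns and Sano; I only indicate the structure.) First I would build the top row. For $K\in\Omega$ and $n\geq 0$ the complex ${\bf R}\Gamma_{c}(G_{k,S_{K_{n}}},T_{K_{n}})$ is perfect over $\cO[\Gal(K_{n}/k)]$, and by global duality together with the weak Leopoldt conjecture (Iwasawa, \cite{Iwa73a}) it is acyclic outside degrees $1$ and $2$, with $H^{2}$ torsion and $H^{1}$ of rank $r=[k\colon\bQ]$. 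For such a complex $C$ there is a canonical injection ${\det}^{-1}(C)\hooklongrightarrow{\bigcap}^{r}H^{1}(C)$; applying this layer by layer and passing to the limit over $(K,n)$, while identifying $\varprojlim_{n}H^{1}(G_{k,S_{K_{n}}},T_{K_{n}})$ with $H^{1}(G_{k,S_{K}},\bT_{K})$, yields a homomorphism ${\rm VS}(T)\longrightarrow{\rm ES}_{r}(T)$. The Euler-system distribution relation appears automatically: for $K'\supset K$ the transition map between determinant modules differs from the naive one by the determinant of the local complexes at the primes of $S_{K'}\setminus S_{K}$, whose Euler characteristics contribute exactly the factors $P_{\fq}({\rm Fr}_{\fq}^{-1})$.

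Next I would produce the remaining maps and check the square. Reducing modulo $\fm$ and carrying out Kolyvagin's derivative construction in its higher-rank form along the poset of squarefree products $\fn$ of Kolyvagin primes gives ${\rm ES}_{r}(T)\longrightarrow{\rm KC}_{r}(\overline{T})$; this is well defined because the Euler-system relations become, modulo $\fm$, the defining relations of a Kolyvagin collection. Running the same determinant-to-bi-dual machine directly on the residual compactly supported complexes of $\overline{T}$ equipped with the transverse condition at $\fn$ produces, layer by layer, a Stark system, giving ${\rm VS}(T)\longrightarrow{\rm SS}_{r}(\overline{T})$. The arrow ${\rm SS}_{r}(\overline{T})\longrightarrow{\rm KS}_{r}(\overline{T})$ is the higher-rank analogue of the Mazur--Rubin regulator map (see \cite{MRkoly}), which applies the finite-singular comparison isomorphisms at the primes dividing $\fn$, and ${\rm KS}_{r}(\overline{T})\hooklongrightarrow{\rm KC}_{r}(\overline{T})$ is the tautological inclusion of those Kolyvagin collections that satisfy the local conditions cutting out the canonical Selmer structure. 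Commutativity of the square then reduces to two compatibilities — that the reduction-mod-$\fm$ map on compactly supported determinants intertwines the top-row construction with the bottom-row one, and that the derivative/finite-singular operations agree along the two routes — both of which are diagram chases once the dictionary between determinants and exterior bi-duals is in place, using that the exterior bi-dual commutes with the relevant base changes.

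Finally, the surjectivity of ${\rm VS}(T)\longrightarrow{\rm SS}_{r}(\overline{T})$ when $H^{2}(G_{k_{p}},\overline{T})=0$. This hypothesis is exactly what forces the core rank of the canonical Selmer structure on $\overline{T}$ to equal $r$: under the standing assumptions the core-rank formula evaluates to $r+\dim_{\cO/\fm}H^{2}(G_{k_{p}},\overline{T})$, so the vanishing makes it $r$. Equivalently, modulo $\fm$ the local complex at $p$ has no degree-two cohomology, so at every layer $\fn$ the residual complex has the right cohomological shape and the canonical map ${\det}^{-1}\to{\bigcap}^{r}H^{1}$ is surjective onto the Stark-system module, not merely injective. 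Since ${\rm VS}(T)$ is free of rank $1$ over $\Lambda[[\Gal(\cK/k)]]$ and ${\rm SS}_{r}(\overline{T})$ is the inverse limit of these layerwise bi-duals, layerwise surjectivity together with an inverse-limit argument yields the claim. I expect this last point — checking that $H^{2}(G_{k_{p}},\overline{T})=0$ genuinely controls the cohomological shape of the residual compactly supported complexes at every layer, so that the determinant-to-Stark map becomes surjective — to be the main obstacle; the rest of the diagram is formal once the determinant/bi-dual formalism and the higher-rank Kolyvagin-derivative construction are available.
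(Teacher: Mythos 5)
This statement is not proved in the paper at all: it is imported verbatim from Burns--Sano, and the paper's ``proof'' consists of the citation to \cite[Theorems~3.12~and~4.16]{sbA}. Your outline follows the same architecture as that source (determinant of the compactly supported complex mapped to the $r$-th exterior bi-dual of $H^{1}$, Euler factors arising from the local complexes at $S_{K'}\setminus S_{K}$, higher-rank Kolyvagin derivatives, Stark systems as limits over moduli of Kolyvagin primes), so as a description of the intended route it is accurate. But as a proof it has genuine gaps at exactly the two points that the cited theorems exist to settle. First, the commutativity of the square is not a formal diagram chase from base-change functoriality of bi-duals: one must compare the higher-rank Kolyvagin derivative applied to the Euler system obtained from a vertical determinantal system with the finite-singular (regulator) maps applied to the associated Stark system, modulus by modulus; in \cite{sbA} this rests on explicit standard representatives of the residual complexes with the transverse conditions and on Kolyvagin-type congruences, and you give no argument for it beyond asserting the compatibilities hold.

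Second, the surjectivity of ${\rm VS}(T)\longrightarrow {\rm SS}_{r}(\overline{T})$ does not follow merely from the core-rank formula evaluating to $r$. What is needed is that, for every modulus $\fn$, the residual compactly supported complex admits a quadratic (square-matrix) presentation --- this is where $H^{2}(G_{k_{p}},\overline{T})=0$ enters, together with the vanishing of the relevant $H^{0}$'s and the global Euler characteristic --- and then that the layerwise Stark-system module is free of rank one with the image of the determinant as a generator, followed by an inverse-limit (Mittag--Leffler type) argument over the poset of moduli. You gesture at this (``the residual complex has the right cohomological shape'') and then explicitly flag it as the unresolved ``main obstacle.'' Relatedly, your construction of ${\det}^{-1}(C)\hooklongrightarrow{\bigcap}^{r}H^{1}(C)$ at finite layers presupposes acyclicity outside degrees $1$ and $2$ and suitable torsion-freeness of $H^{1}$, which requires verifying the running hypotheses on $\overline{T}$ (e.g.\ vanishing of $H^{0}(G_{k,S},T/\fm T)$) rather than weak Leopoldt, which is an Iwasawa-level statement. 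So the proposal is a faithful summary of the Burns--Sano strategy, but it does not replace the cited proof; if you want to include an argument rather than a citation, these are the steps that must actually be carried out.
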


Furthermore, Barns, the author, and Sano proved in the paper \cite{bss} the following 

\begin{proposition}[{\cite[Theorems~5.2~(ii)~and~6.12]{bss}}]\label{prop:bss}\ 
\begin{itemize}
\item[(i)] The image of the homomorphism ${\rm ES}_{r}(T) \longrightarrow {\rm KC}_{r}(\overline{T})$ is contained in ${\rm KS}_{r}(\overline{T})$. 
\item[(ii)]  If the module $H^{2}(G_{k_{p}}, \overline{T})$ vanishes, the homomorphism ${\rm SS}_{r}(\overline{T}) \longrightarrow  {\rm KS}_{r}(\overline{T})$ is an isomorphism and ${\rm KS}_{r}(\overline{T})$ is a free $\cO/\fm$-module of rank $1$. 
%\item[(iii)]  If the module $H^{2}(G_{k_{p}}, \overline{T})$ vanishes, the homomorphism ${\rm VS}(T) \longrightarrow {\rm ES}_{r}(T)$ is a split injection. 
\end{itemize}
\end{proposition}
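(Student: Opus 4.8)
The plan is to recall the construction of the two homomorphisms in the statement and then verify the relations that cut out Kolyvagin systems inside Kolyvagin collections, following Kolyvagin's classical strategy adapted to the higher-rank, exterior-bi-dual formalism. Throughout, $\nu(\fn)$ denotes the number of prime divisors of a squarefree product $\fn$ of Kolyvagin primes.

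For part~(i), the homomorphism ${\rm ES}_{r}(T) \longrightarrow {\rm KC}_{r}(\overline{T})$ is obtained by reducing an Euler system modulo $\fm$, specializing to finite layers, and applying Kolyvagin's derivative operators $D_{\fn} = \prod_{\fq \mid \fn} D_{\fq}$ at the moduli $\fn$. To show that the image lands in ${\rm KS}_{r}(\overline{T})$, I would verify the Kolyvagin-system relation: for each Kolyvagin prime $\fq \nmid \fn$, the singular projection at $\fq$ of the class attached to $\fn\fq$ must agree, under the finite--singular comparison isomorphism at $\fq$, with the localization at $\fq$ of the class attached to $\fn$. First I would localize at $\fq$ and combine Kolyvagin's telescoping identity $(\sigma_{\fq}-1)D_{\fq} = |G_{\fq}| - N_{\fq}$ with the Euler-system distribution relation at $\fq$ --- whose Euler factor is precisely $P_{\fq}({\rm Fr}_{\fq}^{-1})$ --- exactly as in Kolyvagin's original argument. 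The genuinely new ingredient is to propagate this identity through the exterior bi-dual ${\bigcap}^{r}$; this requires knowing that the finite--singular maps and the derivative operators commute with $(-)^{*}$ and with flat base change, which follows from Lemma~\ref{lemma:bidual-inj} and the general properties of bi-duals recorded in \S4.

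For part~(ii), I would first construct the comparison map ${\rm SS}_{r}(\overline{T}) \longrightarrow {\rm KS}_{r}(\overline{T})$: given a Stark system $\{\epsilon_{\fn}\}$, with $\epsilon_{\fn}$ lying in an exterior bi-dual of a Selmer module with relaxed conditions at $\fn$ of rank $r+\nu(\fn)$, one applies the $\nu(\fn)$ finite--singular comparison functionals at the primes dividing $\fn$ to descend to an element $\kappa_{\fn}$ of rank $r$, and then checks the Kolyvagin-system relation by unwinding the Stark-system relation. Injectivity then follows from the observation that a Stark system is pinned down by its component $\epsilon_{(1)}$ at the trivial modulus: the Selmer modules with relaxed conditions inject into one another, and the Stark relations determine $\epsilon_{\fn\fq}$ from $\epsilon_{\fn}$ up to the localization maps, which are injective by Lemma~\ref{lemma:bidual-inj}; since $\kappa_{(1)} = \epsilon_{(1)}$, the comparison map cannot kill a nonzero Stark system. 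For surjectivity I would induct on $\nu(\fn)$, reconstructing $\epsilon_{\fn\fq}$ from a given Kolyvagin system by means of the exact sequence relating the Selmer module with the relaxed condition at $\fq$ to the one with the transverse condition, together with the finite--singular comparison and the Kolyvagin-system relation. This is the step that forces the core rank of the Selmer structure to be exactly $r$, which is guaranteed by the hypothesis $H^{2}(G_{k_{p}}, \overline{T}) = 0$ through Wiles's global Euler-characteristic formula: it ensures that $H^{1}(G_{k_{p}}, \overline{T})$ and the transverse local conditions at Kolyvagin primes have the expected free ranks, so the bi-dual constructions behave as needed.

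\textbf{Main obstacle.} In part~(i) the delicate point is not the congruence arithmetic but the bi-dual bookkeeping at Kolyvagin primes --- checking that Kolyvagin's finite--singular comparison is compatible with ${\bigcap}^{r}$ and with reduction modulo $\fm$. In part~(ii) the hard step is surjectivity: building the entire tower $\{\epsilon_{\fn}\}$ from a single Kolyvagin system requires a careful inductive use of the Selmer-module exact sequences and relies entirely on the core rank being $r$, i.e.\ on $H^{2}(G_{k_{p}}, \overline{T}) = 0$.
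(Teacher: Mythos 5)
Your proposal takes a genuinely different route from the paper: you attempt to re-derive the two cited results of \cite{bss} from scratch (derivative operators, finite--singular comparison, reconstruction of Stark systems), whereas the paper treats them as black boxes and devotes its proof entirely to verifying their hypotheses. Unfortunately, as a re-derivation your sketch has concrete gaps. In part~(i), the assertion that the Kolyvagin-system relations follow ``exactly as in Kolyvagin's original argument'' plus bi-dual bookkeeping justified by Lemma~\ref{lemma:bidual-inj} is not tenable: that lemma only gives injectivity of maps of exterior bi-duals over rings satisfying (G$_{0}$) and (S$_{1}$), and says nothing about compatibility of the finite--singular comparison maps and derivative operators with ${\bigcap}^{r}$; in rank $r>1$ the containment of the derived classes in ${\rm KS}_{r}(\overline{T})$ is precisely the deep content of \cite[Theorem~6.12]{bss} and cannot be reduced to classical rank-one telescoping. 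In part~(ii), your injectivity argument fails: a nonzero Stark system over the residue field need not have $\epsilon_{(1)}\neq 0$ (its components vanish away from core vertices, and $\epsilon_{(1)}=0$ whenever the relevant dual Selmer group is nonzero), so ``$\kappa_{(1)}=\epsilon_{(1)}$'' does not pin the system down; both injectivity and your inductive surjectivity step require the structure theory of \cite{bss} (freeness of the module of Stark systems, core vertices), which you assert rather than supply.

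Moreover, the proposal omits the work that actually constitutes the paper's proof, namely checking that the hypotheses of the cited theorems hold in this setting: the residual hypotheses \cite[Hypotheses~3.2~and~3.3]{bss} (via \cite[Lemma~6.1.5]{MRkoly}), the cartesian property of the canonical Selmer structure (via \cite[Lemma~3.7.1~(i)]{MRkoly} together with \cite[Lemma~6.6]{MRselmer}), and the core rank computation via \cite[Theorem~5.2.15]{MRkoly}. On the last point your appeal to $H^{2}(G_{k_{p}},\overline{T})=0$ alone is insufficient: the formula gives
\[
\chi(T,\cF_{\rm can})=\sum_{v\in S_{\infty}(k)}{\rm rank}_{\cO}\,H^{0}(G_{k_{v}},T^{*}(1))+{\rm rank}_{\cO}\,H^{2}(G_{k_{p}},T)^{\vee},
\]
so one also needs the archimedean contribution $H^{0}(G_{k_{v}},T^{*}(1))\cong\cO$ at every real place, which uses that $\chi$ is even and $k$ is totally real; without this input the core rank is not identified with $r=[k\colon\bQ]$.
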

\begin{proof}
Claim~(i) follows from \cite[Theorem~6.12]{bss}. 
Since claim~(ii) follows from \cite[Theorem~5.2]{bss}, we only need to check that \cite[Hypotheses~3.2,~3.3,~and~4.2]{bss} are satisfied. 
\cite[Lemma~6.1.5]{MRkoly} implies that $\overline{T}$ satisfies \cite[Hypotheses~3.2~and~3.3]{bss}. 
By \cite[Lemma~3.7.1~(i)]{MRkoly}, the canonical Selmer structure $\cF_{\rm can}$ on $\overline{T}$ is cartesian.
Hence \cite[Lemma~6.6]{MRselmer} shows that if the core rank $\chi(T, \cF_{\rm can})$ is $[k \colon \bQ]$, then \cite[Hypotheses~4.2]{bss} is satisfied. 
By \cite[Theorem~5.2.15]{MRkoly}, we have 
\[
\chi(T, \cF_{\rm can}) = \sum_{v \in S_{\infty}(k)}{\rm rank}_{\cO} \left( H^{0}(G_{k_{v}}, T^{*}(1)) \right) +  {\rm rank}_{\cO}\left( H^{2}(G_{k_{p}}, T)^{\vee} \right). 
\]
Here $T^{*} := \Hom_{\cO}(T, \cO)$. 
Since we assume that the module $H^{2}(G_{k_{p}}, \overline{T})$ vanishes, we have $H^{2}(G_{k_{p}}, T) = 0$. Furthermore, the fact that $\chi$ is an even character implies that $H^{0}(G_{k_{v}}, T^{*}(1))  \cong \cO$. 
Hence since $k$ is a totally real field, we conclude that $\chi(T, \cF_{\rm can}) = \# S_{\infty}(k) = [k \colon \bQ]$. 
\end{proof}

\begin{proof}[proof of Theorem \ref{thm:main2}]
To simplify the notation, we put $\cR := \cO[[\Gal(k_{\infty}\cK/k)]]$, and $\fm_{\cK}$ denotes the maximal ideal of $\cR$. 
Propositions~\ref{prop:sbA}~and~\ref{prop:bss} show that there is the following commutative diagram 
\begin{align*}
\begin{split}
\xymatrix{
{\rm VS}(T) \ar[r]^-{\varphi} \ar@{->>}[rd] & {\rm ES}_{r}(T) \ar[d]
\\
  &  {\rm KS}_{r}(\overline{T}), 
}
\end{split}
\end{align*}
where ${\rm VS}(T) \longrightarrow {\rm KS}_{r}(\overline{T})$ is surjective. 
Since the $\cR$-modules ${\rm VS}(T)$ and ${\rm ES}_{r}(T)$ are free of rank $1$ by Theorem~\ref{thm:main} and Proposition \ref{prop:VS}, 
one can take elements $v \in {\rm VS}(T)$ and $c \in {\rm ES}_{r}(T)$ satisfying 
\[
\cR v = {\rm VS}(T) \,\,\, \textrm{ and } \,\,\,  \cR c = {\rm ES}_{r}(T). 
\]
Then there exists an element $f \in \cR$ such that 
\[
\varphi(v) = fc.  
\]
Since the homomorphism $\cR v = {\rm VS}(T) \longrightarrow {\rm KS}_{r}(\overline{T})$ is surjective and ${\rm KS}_{r}(\overline{T}) \cong \cR/\fm_{\cR}$ by Proposition \ref{prop:bss}, we see that $f \not\in \fm_{\cR}$, and hence $f$ is unit.  This shows that $\varphi$ is an isomorphism. 
\end{proof}

\end{document}